\newtheorem{theorem}{Theorem}
\newtheorem{proposition}[theorem]{Proposition}
\newtheorem{lemma}[theorem]{Lemma}
\newtheorem{conjecture}[theorem]{Conjecture}
\theoremstyle{definition}
\newtheorem{definition}[theorem]{Definition}
\newtheorem{example}[theorem]{Example}
\newtheorem{remark}[theorem]{Remark}
\newtheorem{question}[theorem]{Question}
\newtheorem{problem}[theorem]{Problem}
\definecolor{lightblue}{rgb}{0.8,0.8,1.0}
\definecolor{lightgreen}{rgb}{0.8,1.0,0.8}
\definecolor{pBlue}{RGB}{86,139,190}
\definecolor{pCyan}{RGB}{149,186,201}
\definecolor{pSand}{RGB}{184,166,121}
\definecolor{pAlgae}{RGB}{87,115,135}
\definecolor{pSkin}{RGB}{236,216,167}
\definecolor{pGray}{RGB}{156,175,156}
\definecolor{pPink}{RGB}{215,114,127}
\definecolor{pOrange}{RGB}{211,153,80}
\newcommand{\defin}[1]{%
\relax\ifmmode%
\textcolor{blue}{#1}%
\else\textcolor{blue}{\emph{#1}}%
\fi%
}
\newcommand{\oeis}[1]{\href{http://oeis.org/#1}{#1}}
\newcommand{\thsup}{\textnormal{th}}
\newcommand{\setN}{\mathbb{N}}
\newcommand{\setR}{\mathbb{R}}
\newcommand{\setC}{\mathbb{C}}
\newcommand{\setH}{\mathcal{H}}
\newcommand{\uvec}{\mathbf{u}}
\newcommand{\vvec}{\mathbf{v}}
\newcommand{\wvec}{\mathbf{w}}
\newcommand{\xvec}{\mathbf{x}}
\newcommand{\yvec}{\mathbf{y}}
\newcommand{\zvec}{\mathbf{z}}
\newcommand{\ee}{\mathbb{E}}
\newcommand{\Bh}{\hat{B}}
\newcommand{\bh}{\hat{b}}
\newcommand{\symS}{S}
\DeclareMathOperator{\des}{des}
\DeclareMathOperator{\length}{\ell}
\DeclareMathOperator{\inv}{inv}
\DeclareMathOperator{\maj}{maj}
\DeclareMathOperator{\peaks}{peaks}
\DeclareMathOperator{\runsort}{runsort}
\DeclareMathOperator{\revflip}{revflip}
\DeclareMathOperator{\insStay}{Stay}
\DeclareMathOperator{\insSwap}{Swap}
\DeclareMathOperator{\insFlip}{Flip}
\newcommand{\slopeSet}{\mathcal{S}}
\DeclareMathOperator{\peakBij}{\mathcal{B}}
\DeclareMathOperator{\peakBijLex}{\mathcal{C}}
\DeclareMathOperator{\spToRSP}{\mathtt{sptorsp}}
\newcommand{\RSW}{\mathrm{RSW}}
\newcommand{\PEAKVAL}{\mathrm{PKV}}
\newcommand{\PKacb}{\mathrm{PKV}^{\ast}}
\newcommand{\SPV}{\mathrm{SPV}}
\newcommand{\DB}{\mathrm{DB}} 
\newcommand{\LTRMIN}{\mathrm{LRMin}} 
\newcommand{\RSP}{\mathcal{RSP}}
\newcommand{\DES}{\mathrm{DES}}
\newcommand{\MIP}{\mathrm{MIP}}
\newcommand{\SYT}{\mathcal{SYT}}
\newcommand{\BW}{\mathcal{BW}} 
\newcommand{\SSYT}{\mathcal{SSYT}}
\newcommand{\partitionP}{\mathcal{P}}
\newcommand{\setPart}{\mathcal{SP}} 
\newcommand{\eeSet}{\mathcal{E}}
\newcommand{\polySeqV}{\mathcal{V}}
\newcommand{\intLacSec}{\mathcal{F}}
\newcommand{\interl}{\ll}
\tikzset{every picture/.append style={
	scale=1,
	baseline=(current bounding box.center),
	x=1em,
	y=1em,
	thinLine/.style={line width=0.8pt},
	thickLine/.style={line width=1.5pt,line join=round},
	entries/.style={xshift=-0.5em,yshift=-0.5em,font=\small,scale=1.0}
	}
}
\newcommand{\runBlock}[3]{%
\begin{tikzpicture}
\begin{scope}
	\draw[black,x=1em,y=1em,fill=pSkin,line width=1.0pt] (0,0)--(0,1.2)--(5,1.2)--(5,0)--(0,0);
	\node[entries] (A) at (1.5, 1.1) {$#1$};
	\node[entries] (B) at (3,   1.1) {$#2$};
	\node[entries] (C) at (4.5, 1.1) {$#3$};
\end{scope}
\end{tikzpicture}
}
\newcommand{\shortBlock}[2]{%
\begin{tikzpicture}
\begin{scope}
	\draw[black,x=1em,y=1em,fill=pSkin,line width=1.0pt] (0,0)--(0,1.2)--(3,1.2)--(3,0)--(0,0);
	\node[entries] (A) at (1.5,1.1) {$#1$};
	\node[entries] (B) at (2.5,1.1) {$#2$};
\end{scope}
\end{tikzpicture}
}
\newcommand{\singleBlock}[1]{%
\begin{tikzpicture}
\begin{scope}
	\draw[black,x=1em,y=1em,fill=pSkin,line width=1.0pt] (0,0)--(0,1.2)--(1,1.2)--(1,0)--(0,0);
	\node[entries] (A) at (1,1.1) {$#1$};
\end{scope}
\end{tikzpicture}
}
\title{Peaks are preserved under run-sorting}
\author[Per Alexandersson]{Per Alexandersson}
\address{Department of Mathematics, Stockholm University, SE-106 91 Stockholm,
      Sweden}
\email{per.w.alexandersson@gmail.com}
\author{Olivia Nabawanda}
\address{Department of Mathematics, Makerere University, Kampala, Uganda}
\email{onabawanda@must.ac.ug}
\begin{document}
\begin{abstract}
We study a sorting procedure (run-sorting) on permutations, where runs are rearranged in lexicographic order.
We describe a rather surprising bijection on permutations on length $n$,
with the property that it sends the set of peak-values to the set of peak-values after run-sorting.
We also prove that the expected number of descents in
a permutation $\sigma \in S_{n}$ after run-sorting is equal to $(n-2)/3$.
Moreover, we provide a closed form of the exponential generating function introduced by
Nabawanda, Rakotondrajao and Bamunoba in 2020, for the number of run-sorted permutations
of $[n]$, ($\RSP(n)$) having $k$ runs, which gives a new interpretation to the
sequence \oeis{A124324} in the Online Encyclopedia of Integer Sequences. 
We show that the descent generating polynomials, $A_{n}(t)$ for $\RSP(n)$ are real rooted,
and satisfy an interlacing property similar to that satisfied by the Eulerian polynomials.

Finally, we study run-sorted binary words and compute the expected number
of descents after run-sorting a binary word of length $n$.
\end{abstract}
\maketitle 
\section{Introduction and preliminaries}

Studying the distribution of different permutation statistics such as runs, descents,
ascents, excedances and peaks among others has a long history and has been an
area of active research in enumerative combinatorics and mathematics at large,
see e.g.,~\cite{Liagre1855, WarrenSeneta1996, EhrenborgSteingrismsson2000, MacMahon1960, MantaciRakotondrajao2003, Kitaev2007}.
Among the class of permutations where these statistics have been investigated are the so called \emph{flattened partitions}
introduced by D.~Callan, see \cite{Callan2009}. Callan borrowed the word flatten from a function in Mathematica,
where \texttt{Flatten} concatenates a list of lists into one single list. 

Flattened partitions have received a lot of attention since then,
see e.g.,~\cite{MansourShattuckWagner2015,LiuZhang2015, MansourShattuck2011,
NabawandaRakotondrajaoBamunoba2020, NabawandaRakotondrajao2020x}. 
Our main source of inspiration for this paper
is the recent study on flattened partitions by
Nabawanda, Rakotondrajao and Bamunoba~\cite{NabawandaRakotondrajaoBamunoba2020}.

Given a non-empty finite subset $T$ of positive integers, a \defin{set partition} $P$ of $T$ is
a collection of disjoint non-empty subsets $B_1, B_2, \dotsc, B_k$ of $T$ (which we simply call \defin{blocks})
such that $B_1 \cup B_2 \cup \dotsb \cup B_k = T$.
To generate a flattened partition from $P$, the elements in each of the blocks of $P$
are first sorted increasingly, and then the blocks are arranged in lexicographical order.
For instance, $\sigma = 15672835$ is flattened as it arises from the set partition $1567|28|35$,
but $\sigma' = 4516732$ is not a flattened partition.
Flattened partitions constitute a subset of $\defin{\symS_{n}}$, the set of all permutations
of $\{1, 2, \dotsc, n\}$. 
To be consistent, we shall instead refer to \defin{flattened partitions} as \defin{run-sorted permutation},
since we also consider run-sorted binary words in \cref{D}.
We let \defin{$\RSP(n)$} denote the set of run-sorted permutations of length $n$.

For a fixed positive integer $n$, we set $\defin{[n]} \coloneqq \{1, 2, \dotsc, n\}$.
A permutation $\sigma \in \symS_{n}$ will be represented in the one-line notation, $\sigma(1)\sigma(2)\dotsm\sigma(n)$.
In particular, every permutation can be considered as a word of length $n$, with letters in $\setN$.

We say that the word $w$ (of length $n$) has $k$ as a \defin{descent} if $w(k) > w(k+1)$,
where $k \in [n-1]$, where $n$ is the length of $w$.
The \defin{descent set} of $w$ is denoted by $\defin{\DES(w)}$ and $\defin{\des(w)}$
is the cardinality of $\DES(w)$.
A closely related statistic is the set of \defin{descent bottoms};
\begin{equation}
 \defin{\DB(\pi)} \coloneqq \left\{ w_{i+1} : i \in [n-1] \text{ and } w_{i}>w_{i+1} \right\}.
\end{equation}
The set of \defin{left-to-right minima} is defined as
\begin{equation}\label{eq:ltrminDef}
  \defin{\LTRMIN(w)} \coloneqq \left\{ w_i : w_i = \min \{w_1, w_2, \dotsc, w_i \} \right\}.
\end{equation}

\medskip 

A word $w$ with exactly $r-1$ descents can be decomposed into $r$ (weakly) increasing subwords.
Each of these subwords is a \defin{run} of $w$.
For $w = 1526734$ (which is a permutation),
we have the descents $2$ and $5$, and the runs of $w$ are $15$, $267$ and $34$.
Given a word $w$, we let $\defin{\runsort(w)}$ be the word obtained
by rearranging the runs of $w$ in lexicographic order. 
For example,
\begin{align*}
 \runsort(29\; 7\; 368\; 5\; 14) &= 14\; 29\; 368\; 5\; 7 \\
 \runsort(1\;011 \; 0111 \; 00011) &=  00011 \; 011 \; 0111 \; 1.
\end{align*}
In particular, if $\sigma \in \symS_n$, then $\runsort(\sigma) \in \RSP(n)$.
\medskip 

In \cref{A}, \cref{thm:spToRSP}, we refine a result by
Nabawanda, Rakotondrajao and Bamunoba~\cite[Sec. 4]{NabawandaRakotondrajaoBamunoba2020},
which describes a bijection between set partitions of $[n]$ and run-sorted permutation of size $n+1$.
Our refinement allows us to compute the descent set of the resulting permutation from
the set partition.

In \cref{B}, we describe the multivariate generating function for the descent set distribution
of run-sorted permutation $\sigma$.
This solves the problem in \cite{NabawandaRakotondrajaoBamunoba2020}, where an
exponential generating function for the number of flattened partitions
was described implicitly as a solution to a partial differential equation.
We discover that our explicit exponential generating function 
produces a Sheffer sequence.
We also show in \cref{thm:interlacing} that the polynomials 
\[
A_{n}(t) \coloneqq \sum_{\sigma \in \RSP(n)} t^{\des(\sigma)}
\]
studied earlier in \cite{NabawandaRakotondrajaoBamunoba2020} are real-rooted.
In fact, we show the stronger property that the roots of $A_{n-1}(t)$ \emph{interlace} the roots of $A_{n}(t)$. 
\begin{definition}
Let $f$ and $g$ be polynomials with real, non-positive
roots $f_{i}$ and $g_{i}$, respectively.
We say that $f$ \defin{interlaces} $g$, and we write $\defin{f \preceq g}$ if
\[
\dotsm \leq f_{3}\leq g_{3}\leq f_{2}\leq g_{2}\leq f_{1}\leq g_{1} \leq 0.
\]
Note that $\deg(f)=\deg(g)$ or $\deg(f)+1=\deg(g)$.
\end{definition}
\medskip

A \defin{peak} of a permutation $\sigma \in \symS_n$, is an integer $i$, $1 < i < n$ such
that $\sigma(i-1)<\sigma(i)>\sigma(i+1)$. If $i$ is a peak of $\sigma$,
we say that $\sigma(i)$ is a \defin{peak-value} of $\sigma$.
For example, the permutation $\sigma = 1374625 \in \symS_{7}$ has peak-values $7$ and $6$ with
peaks $3$ and $5$ respectively.
Let $\defin{\PEAKVAL(\sigma)}$ denote the set of peak-values of the permutation $\sigma$,
and set $\defin{\SPV(\sigma)} \coloneqq \PEAKVAL(\runsort(\sigma))$.

The set of \defin{$132$-peak-values} is defined as
\[
 \defin{\PKacb(\pi)} \coloneqq \left\{ \pi_i : 1<i<n \text{ and } \pi_{i-1}<\pi_{i+1}<\pi_{i} \right\}.
\]
Note that $\PKacb(\pi) \subseteq \PEAKVAL(\pi)$.

We shall also use the notation $\defin{\xvec_T}\coloneqq x_{t_1}x_{t_2}\dotsm x_{t_m}$
whenever $T = \{t_1,t_2,\dotsc,t_m\}$ is a set of positive integers.

In \cref{C}, we prove the theorem below, which is the main result of this article.
\begin{theorem}
For any positive integer $n \geq 1$, then 
\[
\sum_{\sigma \in \symS_{n}} \xvec_{\PEAKVAL(\sigma)} = 
\sum_{\tau \in \symS_{n}}  \xvec_{\SPV(\tau)}.
\]
\end{theorem}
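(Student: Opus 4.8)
The plan is to prove the stronger, bijective statement: I will construct an explicit bijection $\peakBij\colon \symS_n \to \symS_n$ with the property that $\PEAKVAL(\sigma) = \SPV(\peakBij(\sigma))$ for every $\sigma$. Granting this, the theorem follows immediately by reindexing the right-hand sum along $\tau = \peakBij(\sigma)$, since then $\sum_{\sigma} \xvec_{\PEAKVAL(\sigma)} = \sum_{\sigma} \xvec_{\SPV(\peakBij(\sigma))} = \sum_{\tau} \xvec_{\SPV(\tau)}$. Thus the entire difficulty is concentrated in producing $\peakBij$ and verifying the single set identity $\PEAKVAL(\sigma) = \SPV(\peakBij(\sigma))$.

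First I would translate both statistics into the language of runs. Writing the runs of $\sigma$ as $R_1, \dots, R_r$ in the order they appear, a short case check shows that the peaks of $\sigma$ are exactly the maximal entries of the non-terminal runs of length at least two, so that $\PEAKVAL(\sigma) = \{\max R_i : i < r,\ |R_i| \ge 2\}$. For the right-hand side, the key preliminary observation is that, because the runs of $\sigma$ partition $[n]$, their minima are distinct, so the lexicographic order of the runs coincides with the order by minimum; hence $\runsort$ merely sorts the runs by smallest entry and concatenates them, after which two consecutive blocks fuse into one longer run precisely when the maximum of the first is smaller than the minimum of the second. In particular $\SPV(\sigma)$ is a function of the unordered run set of $\sigma$ alone, whereas $\PEAKVAL(\sigma)$ sees both the run set and which run is terminal; this asymmetry is exactly what $\peakBij$ must reconcile.

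I would then build $\peakBij$ recursively by inserting the letters $1, 2, \dots, n$ one at a time, updating a partially built image together with a state recording the local slope at the active position. At each step the new letter is placed by one of three local moves, $\insStay$, $\insSwap$, and $\insFlip$ (the last built from the elementary transposition $\ijFlip$), the choice being dictated by whether inserting that letter creates, destroys, or relocates a peak-value; the moves are calibrated so that the peak-values accumulated on the domain side agree with the run-sorted peak-values forced on the codomain side. I expect this invariant, maintained step by step, to upgrade to the desired identity $\PEAKVAL(\sigma) = \SPV(\peakBij(\sigma))$ once all letters have been inserted, possibly using a canonical lexicographic bookkeeping variant $\peakBijLex$ to make the choice of move unambiguous.

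The main obstacle will be the fusion of runs under $\runsort$. Because $\SPV$ is read off only after consecutive sorted blocks merge, a single peak-value on the codomain side can be ``responsible for'' a block that, before sorting, was split across several runs, and conversely a run that is terminal in $\sigma$ contributes no peak and yet may lose or gain that status after sorting. Matching these cases forces $\peakBij$ to move between different run sets rather than acting within a fixed one, so the delicate point is to define the three moves so that each merge-or-split event on the $\runsort$ side corresponds to a unique move, and so that the procedure remains invertible. I would establish invertibility by describing the inverse insertion explicitly, peeling off the largest processed letter and undoing the move recorded by the slope state; verifying this in the fusion cases, where the pre-sorting run boundaries must be recoverable from the merged data, is where I expect the argument to require the most care.
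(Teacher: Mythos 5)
Your proposal follows essentially the same route as the paper: a bijection on $\symS_n$ sending $\PEAKVAL$ to $\SPV$, built by recursively inserting the new maximum and matching the five insertion cases of the plain peak recursion against a parallel recursion for peaks after $\runsort$, with Stay/Swap/Flip-type corrections to handle the merging and splitting of runs under sorting, and invertibility obtained by peeling off the largest letter. You have correctly located where all the work lies (defining the Swap and Flip moves so that each fusion event is handled by exactly one invertible move); carrying that out is precisely the content of the paper's admissibility lemmas and case analysis.
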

In fact, we prove a stronger version of this identity
where an additional set-valued statistic is 
involved. This is achieved through a bijective proof, which exploits the 
recursive process of constructing a permutation in $\symS_n$
from a permutation in $\symS_{n-1}$ by inserting $n$ somewhere.
We also prove that the expected number of
descents\footnote{We show that descents are always peaks in a run-sorted permutation.}
in a run-sorted permutation $\sigma$ is given by
\[
\ee[\des(\runsort(\sigma))] = \ee[\peaks(\sigma)] = \frac{n-2}{3}.
\]

In \cref{D}, we prove that the number of
permutations $\sigma \in \symS_{a+b}$ with major index $a$ and $b$ inversions,
can be set in bijection with run-sorted binary words having $a$ $0$'s and $b$ $1$'s.
We are also able to compute the expected number of descents in $\runsort(w)$,
for $w$ being a uniformly chosen binary word of length $n$, see \cref{ssec:BWEEDes}.

\section{Set partitions of \texorpdfstring{$[n]$}{[n]} 
and the descent set in a run-sorted permutation of \texorpdfstring{$[n+1]$}{[n+1]}}\label{A}

Let \defin{$\setPart(n)$} be the set of set-partitions of $[n]$.
Each element in $\setPart(n)$ is expressed as $B_{1}|B_{2}|\dotsb |B_{m}$,
where the elements in each block are written in increasing order
and the blocks arranged in lexicographic order.

\begin{definition}
We shall now recall a bijection $\defin{\spToRSP}: \setPart(n) \rightarrow \RSP(n+1)$ 
from \cite[Sec, 4]{NabawandaRakotondrajaoBamunoba2020}.
Given $P \in \setPart(n)$, we first move the smallest element in
each block to the end of the block. Then, all entries are increased by one,
and we remove the vertical bars separating the blocks. This 
creates a word of length $n$. Finally, we append a $1$ to the beginning of the word.
The result is now an element in $\RSP(n+1)$.
\end{definition}

\begin{example}
Consider the set partition $P = 1258|3|47|6$.
Moving the smallest element in each block to the end of the block,
then increasing the entries by $1$, and finally removing the vertical
bars produces the word $w = 36924857$, of length $8$.
Inserting $1$ at the beginning of $w$ gives $136924857 \in \RSP(9)$.
\end{example}

In \cite{NabawandaRakotondrajaoBamunoba2020}, the following theorem was proved.
\begin{theorem}[{See \cite[Thm. 19]{NabawandaRakotondrajaoBamunoba2020}}]\label{thm:spToRSP}
For any integer $n \geq 0$, if $P$ is a set-partition of $[n]$
and $\sigma = \spToRSP(P)$, then the following assertions are equivalent:
\begin{enumerate}
		\item [(i)] the number of blocks of size greater than $1$ of the partition $P$ is $k-1$,
		\item [(ii)] the number of runs in the flattened partition $\sigma$ is equal to $k$.
\end{enumerate}
\end{theorem}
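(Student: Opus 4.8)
The plan is to count descents of $\sigma = \spToRSP(P)$ directly, since a word with $d$ descents decomposes into $d+1$ runs; thus (ii) holds with $k$ runs precisely when $\sigma$ has $k-1$ descents, and it suffices to prove that $\des(\sigma)$ equals the number of blocks of $P$ of size greater than $1$. Because increasing every entry by $1$ preserves the relative order of the letters, it does not alter the descent set. Hence I may work with the intermediate word $w$ obtained by moving the smallest element of each block to the end and then concatenating, and afterwards track the effect of prepending the letter $1$.

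First I would analyze the descents \emph{inside} a single block. Writing $B = \{b_1 < b_2 < \dots < b_s\}$, moving the minimum to the end produces the factor $b_2 b_3 \dots b_s b_1$. The prefix $b_2 < b_3 < \dots < b_s$ is increasing, so the only candidate descent is the last step $b_s > b_1$, which occurs exactly when $s \geq 2$. Therefore every block of size at least $2$ contributes precisely one internal descent, while each singleton contributes none.

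Next I would show that no descent arises \emph{between} consecutive blocks. The key observation is that, after its minimum has been moved to the end, every block---singleton or not---terminates in its own minimum $\min(B)$. Since the blocks of $P$ are listed in lexicographic order and are pairwise disjoint, their first entries, i.e. their minima, form a strictly increasing sequence; moreover the first letter of any block is at least that block's minimum. Consequently the final letter of block $i$, namely $\min(B_i)$, is strictly smaller than the first letter of block $i+1$, so each boundary step is an ascent. Combined with the internal analysis, the descents of $w$ are in bijection with the blocks of size greater than $1$.

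Finally, prepending $1$---the unique letter smaller than every entry of $w$, since all entries exceed $1$ after the shift---inserts only an ascent at the front and leaves the number of descents unchanged. Hence $\des(\sigma)$ equals the number of blocks of $P$ of size greater than $1$, and $\sigma$ has exactly one more run than it has descents, which yields the equivalence of (i) and (ii). The one step that deserves careful statement, and the main obstacle to a clean argument, is the boundary analysis: it relies on both that every transformed block ends in its minimum and that these minima increase along the lexicographic ordering of the blocks.
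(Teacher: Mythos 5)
Your argument is correct. Note, however, that the paper does not actually prove \cref{thm:spToRSP}: it is imported verbatim from \cite[Thm.~19]{NabawandaRakotondrajaoBamunoba2020}, so there is no in-paper proof to compare against. Your descent-counting argument is the natural direct one, and each step checks out: the only possible descent inside a transformed block $b_2\dotsm b_s b_1$ is the final step $b_s>b_1$, present exactly when $s\geq 2$; the block boundaries are ascents because each transformed block ends in its own minimum, the minima increase along the lexicographic ordering of the blocks (lex order coincides with order by minima since the blocks are disjoint), and the first letter of the next transformed block is at least that block's minimum; and prepending the global minimum $1$ creates no descent. It is worth pointing out that your analysis gives slightly more than the stated equivalence: it locates the descents of $\sigma$ precisely at the positions of the block minima after the shift, i.e.\ at the positions $p_i$ of the last elements of the blocks $B_i$ with $|B_i|\geq 2$. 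This is exactly the refinement the paper records as \cref{prop:DESfromSP}, whose proof there is \emph{deduced from} \cref{thm:spToRSP} rather than the other way around; your route would let one prove the refinement first and obtain the run count as an immediate corollary.
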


We shall refine \cref{thm:spToRSP} by describing
how to obtain the set $\DES(\sigma)$ from the set partition corresponding to $\sigma$.
\begin{proposition}\label{prop:DESfromSP}
Suppose $\sigma = \spToRSP(P)$ for some $P = B_1 | B_2 |\dotsb | B_m \in \setPart(n)$.
For all $1 \leq i \leq m$, let $p_{i}$ denote the position
of the last element in $B_{i}$.
Then
\[
  \DES(\sigma) = \{ p_{i}:  i \in [m] \text{ where } |B_{i}|\geq 2 \}.
\]
\end{proposition}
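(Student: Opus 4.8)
The plan is to work directly with the length-$n$ word $w$ produced from $P$ \emph{before} the leading $1$ is appended, and to pin down exactly where its descents sit. Write each block as $B_i = \{b^{(i)}_1 < b^{(i)}_2 < \dotsb < b^{(i)}_{s_i}\}$ with $s_i = |B_i|$, so that $p_i = s_1 + s_2 + \dotsb + s_i$ is the cumulative block length; this is precisely the position of the last entry of $B_i$ in the standard-form concatenation $B_1 B_2 \dotsb B_m$, and these positions are untouched by the operations defining $\spToRSP$ (they only permute letters within a block, add $1$ to everything, and delete the bars). Because $P$ is written in standard form, the blocks are ordered by their minima, which gives $b^{(1)}_1 < b^{(2)}_1 < \dotsb < b^{(m)}_1$; this single inequality is the only structural fact I will need.

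First I would record the explicit shape of $w$. Moving the smallest entry of $B_i$ to the end and adding $1$ turns the block into the word $\beta_i = (b^{(i)}_2+1)(b^{(i)}_3+1)\dotsm(b^{(i)}_{s_i}+1)(b^{(i)}_1+1)$ when $s_i \geq 2$, and $\beta_i = (b^{(i)}_1+1)$ when $s_i = 1$, with $w = \beta_1\beta_2\dotsm\beta_m$. Prepending $1$, which is strictly smaller than every letter of $w$, creates an ascent in the first position and shifts every descent of $w$ one step to the right, so that $\DES(\sigma) = \{\,q+1 : q \in \DES(w)\,\}$. It therefore suffices to prove $\DES(w) = \{\,p_i - 1 : s_i \geq 2\,\}$.

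Next I would split the descents of $w$ into those internal to a single $\beta_i$ and those straddling a block boundary. A singleton $\beta_i$ contributes no internal descent, while for $s_i \geq 2$ the first $s_i - 1$ letters $(b^{(i)}_2+1) < \dotsb < (b^{(i)}_{s_i}+1)$ are strictly increasing and the final letter $b^{(i)}_1 + 1$ is strictly smaller than $b^{(i)}_2 + 1$; hence $\beta_i$ has exactly one internal descent, located between positions $p_i - 1$ and $p_i$ of $w$. This accounts for the set $\{\,p_i - 1 : s_i \geq 2\,\}$.

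The remaining point, and the only place standard form is used, is to rule out descents at the block boundaries. The last letter of $\beta_i$ is always the block minimum shifted up, namely $b^{(i)}_1 + 1$, whereas the first letter of $\beta_{i+1}$ is $b^{(i+1)}_2 + 1$ if $s_{i+1} \geq 2$ and $b^{(i+1)}_1 + 1$ if $s_{i+1} = 1$; in either case it is at least $b^{(i+1)}_1 + 1 > b^{(i)}_1 + 1$ by the ordering of block minima. Thus $w(p_i) < w(p_i + 1)$ for every $i < m$, so no boundary position is a descent. Combining the two analyses yields $\DES(w) = \{\,p_i - 1 : s_i \geq 2\,\}$ and hence the claimed formula for $\DES(\sigma)$. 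I expect this boundary step to be the crux: the internal count is immediate from the explicit form of $w$, but the no-descent-at-boundaries claim is exactly what forces the canonical ordering of the blocks into play, and it is also what implicitly guarantees that $\sigma$ is genuinely run-sorted.
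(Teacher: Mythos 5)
Your proof is correct, and it takes a more self-contained route than the paper's. The paper argues top-down: it first invokes \cref{thm:spToRSP} to get the number of descents, then describes the global run-structure of $\sigma$ (the run lengths $l_i = p_i - p_{i-1}$ and the values $\sigma(p_i)$ at the run ends) and reads off the descent positions from that description. You instead work bottom-up, writing out the image word $w = \beta_1\beta_2\dotsm\beta_m$ letter by letter, classifying descents as internal to a block image versus straddling a boundary, and checking each case by a local comparison; the only global input is the ordering of block minima, which kills the boundary descents, and the final shift by one accounts for the prepended $1$. What your approach buys is independence from \cref{thm:spToRSP} (in fact it reproves the run count as a byproduct) and a fully explicit verification of every step; what the paper's approach buys is extra structural information about $\sigma$ beyond the descent set, namely the run lengths and the values at the descent tops, which is used in the subsequent example. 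One small bonus of your local computation: it correctly identifies the descent top of the $i$-th block as $\max(B_i)+1$, whereas the paper's intermediate formula $\sigma(p_i) = p_1 + n - i$ only holds when the block maxima happen to decrease along the blocks (as in the paper's example); your argument does not rely on any such formula, so it is the safer of the two.
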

\begin{proof}
By \cref{thm:spToRSP}, if there are $k$ blocks of size at least $2$, we have that $|\DES(\sigma)| = k$,
and $\sigma = \spToRSP(P)$ has a total of $k+1$ runs.
The lengths $l_{i}$, of the first $k$ runs of $\sigma$ are given by
\[
l_{i} = p_{i} - p_{i-1},
\]
with the convention $p_{0} \coloneqq 0$.
Moreover, $\sigma(p_{i})$ is given by the relation
\[
\sigma(p_{i}) = p_{1}+ n-i,
\]
and we also have that $\sigma(1) = 1$.
The remaining elements of $\sigma$ are distributed from the
set $\{2, 3, \dotsc, \sigma(p_{k})-1\}$ in the same order,
such that the lengths of the first $k$ runs are given by the values of the $l_{i}$.
From these observations, we can draw the desired conclusion.
\end{proof}

\begin{remark}\label{rem:desArePeaks}
We should also note that the set of of peaks is the same as set of descents,
for any element in $\RSP(n)$.
Suppose $\sigma \in\RSP(n)$ and $k$ is a descent of $\sigma$,
so that $\sigma(k)>\sigma(k+1)$.
Now, since $\sigma$ is run-sorted, we must have that $\sigma(k-1)<\sigma(k)$,
otherwise, $\sigma(k)$ is a run of length $1$ in $\sigma$,
violating \cref{prop:DESfromSP}.
Hence, $k$ is also a peak of $\sigma$.
\end{remark}

\begin{example}
Let us consider a set partition $P = 18|27|3|46|5$ of $[8]$
having $3$ blocks of size $\geq 2$ and let $\sigma = \spToRSP(P)$.
Then $\DES(\sigma) = \{2, 4, 7\}$. 
The lengths $l_{i}$ for $1 \leq i \leq 3$ of the first $3$ runs of $\sigma$
are  $l_{1} = 2-0 = 2, l_{2} = 4-2 = 2,$ and $l_{3} = 7-4 = 3$.

The last elements in each of the first $3$ runs of $\sigma$,
corresponding to each position in $\DES(\pi)$ are $\sigma(p_{1}) = 2+8-1 = 9,
\sigma(p_{2}) = 2+8-2 = 8,$ and $\sigma(p_{3}) = 2+8-3 = 7$.

Since $\sigma$ is run-sorted, $\sigma(1) = 1$, and its remaining elements
are chosen from the set $\{1, 2, \dotsc, 6\}$ and placed in this order.
Consequently, $\sigma = 19\; 28 \;347\; 56$.
\end{example}

\section{Run-sorted permutations and descent sets}\label{B}

In this section, we describe multivariate generating functions 
for the descent set distribution of run-sorted permutations.
The main result in this section is \cref{thm:genFuncForRSPDes}, where 
we provide the exponential generating function for
flattened partitions of size $n$,
where the number of descents is tracked.
This resolves an open problem in \cite{NabawandaRakotondrajaoBamunoba2020},
and we discover an interesting connection with Sheffer sequences.
\medskip 

We start with some recursions for tracking the descent set.
\begin{theorem}\label{thm:descentSetRecursion}
For all integers $n \geq 1$, let
\[
 A_n(\xvec) \coloneqq \sum_{\pi \in \RSP(n)} \prod_{j \in \DES(\pi)} x_{n-j},
\]
so that we keep track of the descent set, \emph{by indexing from the end}.

Then we have the recursive relations
\begin{equation}\label{eq:rspDesRec1}
A_n(\xvec) = 1 + \sum_{i = 1}^{n-2}\left(\binom{n-1}{i}  - 1\right) x_{i} A_i(\xvec)
\end{equation} 
and 
\begin{equation}\label{eq:rspDesReck}
A_n(\xvec) = A_{n-1}(\xvec) + \sum_{i = 1}^{n-2} \binom{n-2}{i-1} x_{i} A_i(\xvec).
\end{equation}
\end{theorem}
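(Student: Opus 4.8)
The plan is to derive both recursions from the bijection $\spToRSP : \setPart(n) \to \RSP(n+1)$ together with \cref{prop:DESfromSP}, which tells us exactly where the descents of $\sigma = \spToRSP(P)$ sit in terms of the block structure of $P$. Since $\DES(\sigma)$ consists precisely of the positions $p_i$ of the last elements of the blocks $B_i$ with $|B_i| \geq 2$, and these positions are the partial sums of the run lengths, tracking $\prod_{j \in \DES(\pi)} x_{n-j}$ amounts to tracking the sizes and arrangement of the nontrivial blocks. The indexing ``from the end'' by $x_{n-j}$ is chosen so that the recursion can peel off the final run cleanly; I would first translate the statistic $\prod_{j \in \DES} x_{n-j}$ into a weight on set partitions via this bijection before attempting any counting.

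For \cref{eq:rspDesReck}, which I would tackle first since it has the more transparent binomial $\binom{n-2}{i-1}$, the idea is to condition on the \emph{last run} of $\pi \in \RSP(n)$, equivalently the last nontrivial block of the corresponding partition. I would split $\RSP(n)$ into those permutations whose final run has length $1$ and those whose final run has length $\geq 2$. A run of length $1$ at the end contributes no descent at the very last position, and removing it should biject with $\RSP(n-1)$, accounting for the term $A_{n-1}(\xvec)$. When the last run has length $\geq 2$, say the preceding runs occupy the first $i$ positions (so there is a descent at position $i$, contributing the factor $x_{n-i}$, which matches $x_i$ after re-examining the end-indexing convention), the initial segment should be an element of $\RSP(i)$ up to relabeling, and the $\binom{n-2}{i-1}$ counts the ways to choose which values fill the final run subject to the run-sorted constraint. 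The main work is checking that this decomposition is a genuine bijection and that the weight factors multiplicatively as $x_i A_i(\xvec)$.

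For \cref{eq:rspDesRec1}, the structure $\binom{n-1}{i} - 1$ strongly suggests conditioning instead on the position $i$ of the \emph{first} descent of $\pi$, equivalently the length of the first run minus one, or on a first-block decomposition. Here the $\binom{n-1}{i}$ counts selections of the content of an initial segment and the $-1$ removes a degenerate (e.g.\ purely increasing, descent-free) case, with the lone $+1$ collecting the single fully increasing permutation that has empty descent set and hence weight $1$. I would set up a bijection that reads off the first run, records the descent at position $i$ as the factor $x_i$, and identifies the remainder with a member of $\RSP(i)$ after an order-preserving relabeling, so that the recursive weight $A_i(\xvec)$ appears. The two recursions should ultimately be reconcilable via Pascal-type identities, which offers a useful internal consistency check.

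The hard part will be verifying that the end-indexing $x_{n-j}$ interacts correctly with the relabeling step in each bijection: when we extract an initial (or terminal) segment and view it as an element of a smaller $\RSP(i)$, the descent positions must be re-read relative to the smaller length, and one must confirm that $x_{n-j}$ for the original permutation coincides with the intended $x_i$-weight on the smaller permutation. I expect the delicate bookkeeping to lie in matching these indices precisely so that the factor contributed by the newly created descent is exactly $x_i$ and the inherited factors assemble into $A_i(\xvec)$ rather than a shifted variant. Once the index matching is pinned down, both identities should follow from straightforward enumeration of the choices encoded by the binomial coefficients.
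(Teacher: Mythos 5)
There is a genuine gap, and it lies exactly where you predicted it would: the interaction between the end-indexing $x_{n-j}$ and your chosen decomposition. The key mechanism in the paper's proof is that both recursions peel off the \emph{first} run (the one containing $1$). If a prefix of length $n-i$ is removed, a descent of $\sigma$ at position $j>n-i$ sits at position $j-(n-i)$ in the remaining word of length $i$, and its index from the end is $i-\bigl(j-(n-i)\bigr)=n-j$, i.e.\ unchanged; this is precisely why the inherited factors reassemble into $A_i(\xvec)$. Your decomposition of \eqref{eq:rspDesReck} by the \emph{last} run destroys this: removing a terminal segment of length $\ell$ shifts every surviving descent's index from the end by $\ell$, so the initial segment's weight is a shifted variant of $A_i(\xvec)$, not $A_i(\xvec)$ itself, and the new descent at position $i$ contributes $x_{n-i}$, which does not ``match $x_i$'' under any reading of the convention. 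Your $A_{n-1}(\xvec)$ term is also wrong on two counts: a singleton final run forces $\sigma(n-1)>\sigma(n)$, so it \emph{does} create a descent (at position $n-1$, weight $x_1$) that your accounting drops; and deleting that singleton is not injective into $\RSP(n-1)$ --- for instance $1243$ and $1342$ both truncate and standardize to $123$, while nothing maps to $132$. The paper's split for \eqref{eq:rspDesReck} is instead on whether $1$ and $2$ lie in the same run: if so, delete the $1$ and shift down (this preserves every index from the end); if not, the first run is $1$ together with a subset of $\{3,\dotsc,n\}$ of size $n-i-1$, giving the $\binom{n-2}{i-1}$.

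For \eqref{eq:rspDesRec1} your instinct (remove the first run, with $\binom{n-1}{i}-1$ counting its possible contents and $-1$ excluding the choice that fails to create a descent) is essentially the paper's argument, but your indices are inverted: if $i$ is the \emph{actual} position of the first descent, the remainder lies in $\RSP(n-i)$ and the new descent contributes $x_{n-i}$, not $\RSP(i)$ and $x_i$ as you wrote. The statement only comes out right when $i$ denotes the descent's position \emph{from the end}, so that the first run has length $n-i$, the remainder has $i$ entries, and the new descent at actual position $n-i$ contributes $x_{n-(n-i)}=x_i$. Finally, the preliminary detour through $\spToRSP$ and \cref{prop:DESfromSP} is not needed and is never actually used in your argument; the paper works directly with runs of elements of $\RSP(n)$.
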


Before proving this, we shall look at a small example, because we use 
a somewhat non-standard convention.
\begin{example}
Let us compute the polynomial $A_5(\xvec)$.
Recall that the presence of $x_i$ indicates 
a descent at the $i^\thsup$ position from the end.
In \cref{tab:tc}, we compute the sum over all permutations 
in $\RSP(5)$, weighted by a monomial.
\begin{table}[!ht]
		\centering
		\begin{tabular}{p{7cm}l}
			$\pi \in \RSP(5)$ & $\prod_{j \in \DES(\pi)} x_{n-j}$ \\
			\toprule
			$12345$  & $1$  \\
			$12354$, $12453$, $13452$ & $x_1$  \\
			$12435$, $12534$, $13425$, $13524$, $14523$ & $x_2$ \\
			$13245$, $14235$, $15234$ & $x_3$ \\
			$13254$, $14253$, $15243$ & $x_1x_3$ \\
			\bottomrule
		\end{tabular}
		\caption{The set $\RSP(5)$ arranged according to the descent set.
		By summing the monomials in the second column, we see that $A_5(\xvec)=1 + 3 x_1 + 5 x_2+3 x_3 + 3 x_1 x_3$.
		}\label{tab:tc}
	\end{table}
\end{example}
\begin{proof}[Proof of \cref{thm:descentSetRecursion}]
We begin by proving the first relation.
Note that there is only one run-sorted permutation with zero descents, i.e.,
the identity permutation.
This corresponds to the $1$ in the right-hand side of \eqref{eq:rspDesRec1}.
Now consider a run-sorted permutation $\sigma\in \RSP(n)$
having at least one descent (and thus at least two runs).
Let $i \in \{ 2, 3, \dotsc, n-1\}$ be the largest such 
descent (indexed from the end) meaning that the first run of $\sigma$
has length $n-i$.
Removing the first run of $\sigma$, prepending a $1$, and then standardizing the 
result, produces a $\tau \in \RSP(i)$,
where $\des(\sigma)=1+\des(\tau)$.
Now, the elements greater than $1$ in the first run of $\sigma$,
is a subset of $\{2,\dotsc,n\}$ of cardinality $n-i-1$.
Moreover, any subset here is possible except
\[
 \{2, 3, \dotsc, n-i-1\},
\]
as in this case, there would not be a descent between the first and second run.
It follows that given $\tau \in \RSP(i)$ and one of the $\binom{n-1}{n-i-1} - 1$
subsets of $\{2,\dotsc,n\}$, we can uniquely recover $\sigma$,
by simply inserting the elements in the chosen subset after the $1$ in increasing order,
and then standardizing the remaining entries so that a permutation is produced.
This operation creates a new descent at position $i$ from the end,
explaining the $x_{i}$ factor and we have proved \eqref{eq:rspDesRec1}.
\medskip 

For the identity in \eqref{eq:rspDesReck}, 
we shall describe a bijection, 
that maps $\sigma \in \RSP(n)$ to either 
some  $\tau \in \RSP(n-1)$ or to some $\tau \in \RSP(i)$ for some $i<n$, together with 
a subset of size $n-i-1$ of $\{3,4,\dotsc,n\}$.

\textbf{First case:} $1$ and $2$ are in the same run of $\sigma$.
We remove the $1$ from $\sigma$ and decrease all entries by one.
This produces $\tau \in \RSP(n-1)$, and $\des(\sigma)=\des(\tau)$.

\textbf{Second case:} $1$ and $2$ in different runs of $\sigma$.
The second run of $\sigma$ must then necessarily start with $2$,
and the first run of $\sigma$ contains $1$, 
and some subset of $\{3,4,\dotsc,n\}$ of size $n-i-1$,
for some $i \in [n-2]$. Note that $i$ is the number of elements in
$\sigma$ not in the first run.
We then let $\tau \in \RSP(i)$ be obtained from $\sigma$
by removing the first run, and then standardize the result. 
This preserves all descents (and their indexing from the end),
except the descent between the first and 
second run of $\sigma$, which is located at position $i$ (from the end).

This procedure is invertible --- given $\tau \in \RSP(i)$ and 
one of the $\binom{n-2}{n-i-1}$ subsets of $\{3,4,\dotsc,n\}$, 
there is a unique $\sigma \in \RSP(n)$
where the first run of $\sigma$ consists of $1$ and the elements in the chosen subset.
Hence, we have proved that 
$A_n(\xvec) = A_{n-1}(\xvec) + \sum_{i = 1}^{n-2} \binom{n-2}{n-i-1} x_i A_{i}(\xvec)$,
and by noting that $\binom{n-2}{n-i-1} = \binom{n-2}{i-1}$, we are done.
\end{proof}

\begin{example}
Let us illustrate  the proof of \eqref{eq:rspDesRec1} and construct a run-sorted 
permutations of $[6]$ having two
descents from a run-sorted permutation $\tau$ of $[3]$ having 
one descent at position $1$ from the end.
We have $\tau = 132$. The allowed subsets of size two from the
set $\{2, 3, 4, 5, 6\}$ are 
\[
\{2, 4\}, \{2, 5\}, \{2, 6\}, \{3, 4\}, \{3, 5\}, \{3, 6\}, \{4, 5\}, \{4, 6\}, \{5, 6\}.
\]
From the pair $(\{3, 4\}, \tau)$, we then get $\pi = 134265$, which has a
descent at position $1$  (from the end) 
and the second descent at position $3$ (from the end).
\end{example}

\medskip 
We now turn our attention to the univariate sequence $\{A_n(t)\}_{n\geq 0}$
of polynomials where $A_n(t)$ is obtained from $A_n(\xvec)$ by letting $x_i \to t$.
Equivalently,
\begin{equation}
 \defin{A_n(t)} \coloneqq \sum_{\pi \in \RSP(n)} t^{\des(\pi)}.
\end{equation}
The first few such polynomials are given in \cref{tab:t1}.
From the definition, it is straightforward to see that $A_n(t)$ has degree $\lfloor n/2 \rfloor$.

\begin{table}[!ht]
\begin{tabular}{cr}
$n$   & $A_n(t)$\\
\toprule
$ 1$ & $1$\\
$ 2$ & $1$ \\
$3$ & $t+1$ \\
$ 4$ & $4t+1$ \\
$5$ & $3t^2+11t+1$ \\
$ 6$ & $25t^2+26t+1$\\
$ 7$ & $	15t^3+130t^2+57t + 1$ \\
$8$ & $210 t^3+546 t^2+120 t+1$\\
$ 9$ & $105 t^4+1750 t^3+2037 t^2+247 t+1$\\
\bottomrule
\end{tabular}
\caption{The polynomials $A_n(t)$.}\label{tab:t1}
\end{table}

Now let $\defin{f_{n, k}}$ be the number of run-sorted
permutations of $[n]$ having $k$ runs, see \oeis{A124324} in the OEIS.
Note that 
\begin{equation}\label{eq:fnkDef}
 t A_n(t) = \sum_{\pi \in \RSP(n)} t^{\des(\pi)+1} = 
 \sum_{k\geq 1} t^{k} f_{n,k}.
\end{equation}
In \cite{NabawandaRakotondrajaoBamunoba2020}
the authors proved that for all integers $n$ and $k$ 
such that $1 \leq k \leq n$, we have
\begin{equation}
f_{n+2, k} = f_{n+1, k} + \sum_{i = 1}^{n}\binom{n}{i}f_{n+1-i, k-1}.
\end{equation}
Using this recurrence relation, the exponential generating
function, $\defin{F(t, u)} \coloneqq \sum_{n,k\geq 0} \frac{f_{n,k}}{n!} u^n t^k$
for these numbers was found to satisfy
\begin{equation}
\frac{\partial F(t, u) }{\partial u} = t\exp(t(\exp(u) - 1)+ u(1-t)), \text{ where }
\dfrac{\partial F(t, 0)}{\partial u} = t.
\end{equation}
We observe that $F(t, u)$ is not in its closed form and the authors
of this article left this as an open problem to find a closed-form solution.

We now use the recurrence relation in \eqref{eq:rspDesRec1},
to instead find the generating function for the polynomials $A_{n+1}(t)$,
where we have shifted the index by one. This modification 
allows us to derive a closed form for the corresponding exponential generating function.

\begin{theorem}\label{thm:genFuncForRSPDes}
Let $G(t,u)$ be the exponential generating function 
\[
G(t,u) = \sum_{n \geq 0} A_{n+1}(t) \frac{u^n}{n!}.
\]
Then
 \begin{equation}\label{eq:genFuncForRSPDes}
 G(t,u) = \exp\left[u + t(e^u-u-1) \right].
 \end{equation}
\end{theorem}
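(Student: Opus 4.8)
The plan is to convert the recursion \eqref{eq:rspDesRec1} into a differential equation for $G(t,u)$ and solve it. Setting $x_i \to t$ in that recursion gives, for $n \geq 1$,
\[
 A_n(t) = 1 + \sum_{i=1}^{n-2}\left(\binom{n-1}{i} - 1\right) t\, A_i(t).
\]
The key observation is that the binomial coefficient $\binom{n-1}{i}$ is exactly the kind of kernel that turns a sum into a product of exponential generating functions, while the $-1$ terms produce simpler, explicitly summable corrections. So the whole strategy is to multiply by $\frac{u^{n-1}}{(n-1)!}$ (matching the index shift in the definition of $G$) and sum over $n$, recognizing each piece as a known series.

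First I would reindex to work with $B_m(t) \coloneqq A_{m+1}(t)$, so that $G(t,u) = \sum_{m\geq 0} B_m(t)\frac{u^m}{m!}$, and rewrite the recursion with $n = m+1$. Summing the constant $1$ over all $m \geq 0$ contributes $e^u$. For the main sum, the term $\binom{n-1}{i}t\,A_i(t) = \binom{m}{i}t\,A_i(t)$ is, after shifting $i$ so the lower index of $A$ matches my $B$, precisely a binomial convolution; by the exponential formula for products of EGFs it assembles into $t\,e^u\,G(t,u)$ up to the missing boundary terms $i=0, i=n-1, i=n$ (i.e.\ $i=m-1, m, m+1$ in the shifted variable) that the finite sum $\sum_{i=1}^{n-2}$ excludes and that the $-1$ subtracts off. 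Those excluded/subtracted pieces are all low-degree or geometric in nature and sum to elementary closed forms (combinations of $e^u$, $u e^u$, and constants). Collecting everything, I expect the differential-free functional identity
\[
 G(t,u) = e^u + t\,e^u\bigl(G(t,u) - e^u(1+u)\bigr),
\]
or something of exactly this shape after the boundary bookkeeping, which one can either solve algebraically for $G$ or — more robustly — differentiate in $u$ to get a clean first-order linear ODE $\partial_u G = (1 + t(e^u-1))G$ with $G(t,0)=1$, whose solution is manifestly $e^u \exp[t(e^u - u - 1)]$.

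The honest alternative, which I would actually pursue to avoid fragile boundary-term arithmetic, is simply to \emph{verify} the claimed closed form by checking it satisfies the same recursion. Differentiating the proposed $G(t,u) = e^u\exp[t(e^u-u-1)]$ gives $\partial_u G = \bigl(1 + t(e^u-1)\bigr)G$, an identity one reads off instantly. Expanding both sides in powers of $u$ and matching coefficients of $\frac{u^{m}}{m!}$ then reproduces exactly \eqref{eq:rspDesReck} (the second recursion, since $\binom{n-2}{i-1}$ is the convolution kernel naturally produced by multiplying $G$ against $e^u - 1$), together with the correct initial conditions $A_1(t) = A_2(t) = 1$ visible in \cref{tab:t1}. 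Since \cref{thm:descentSetRecursion} already establishes that the $A_n(t)$ are uniquely determined by that recursion and those initial values, matching the ODE and the seed values suffices.

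The main obstacle I anticipate is the boundary-term bookkeeping in the direct summation approach: the finite range $\sum_{i=1}^{n-2}$ together with the $-1$ correction means several edge cases ($i$ near $1$ and near $n$, and small $n$) must be handled so that the convolution telescopes cleanly into the $e^u - 1$ factor rather than $e^u$ or $e^u - u - 1$. This is precisely why I would favor the verification route through \eqref{eq:rspDesReck}, where the convolution kernel $\binom{n-2}{i-1}$ matches the series $(e^u - 1)G$ without leftover terms, making the coefficient comparison routine and the ODE $\partial_u G = (1 + t(e^u-1))G$ transparent.
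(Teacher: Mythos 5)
Your preferred route --- reducing everything to the first-order ODE $\frac{\partial}{\partial u}G = \bigl(1+t(e^u-1)\bigr)G$ with $G(t,0)=1$, whose coefficient expansion is precisely the convolution recursion \eqref{eq:rspDesReck} with seed $A_1(t)=1$ --- is exactly the argument the paper gives, differing only in direction: the paper derives the ODE from \eqref{eq:rspDesReck} and then checks that the closed form solves it, while you start from the closed form and verify the recursion. Both rest on the same identification of the kernel $\binom{n-2}{i-1}$ with multiplication by $e^u-1$, so the proposal is correct and essentially the paper's proof.
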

\begin{proof}
The recursion in \eqref{eq:rspDesReck} gives that
\[
  \sum_{n \geq 0} A_{n+1}(t) \frac{u^n}{n!} = 
  \sum_{n \geq 0} A_{n}(t) \frac{u^n}{n!} + 
  t \sum_{n \geq 0} \sum_{j=1}^{n-1} A_{j}(t) \frac{(n-1)!}{(j-1)! (n-j)!} \frac{u^n}{n!}.
\]
Rewriting now gives
\[
  G(t,u) = 
  \sum_{n \geq 0} A_{n}(t) \frac{u^n}{n!} + 
  t \sum_{j \geq 1} \sum_{n \geq j+1} A_{j}(t) \frac{(n-1)!}{(j-1)! (n-j)!} \frac{u^n}{n!}.
\]
We differentiate both sides with respect to $u$, and get 
\[
\frac{\partial}{\partial u}
  G(t,u) = 
  \sum_{n \geq 1} A_{n}(t) \frac{u^{n-1}}{(n-1)!} 
  + 
  t \sum_{j \geq 1} \sum_{n \geq j+1} A_{j}(t) \frac{(n-1)!}{(j-1)! (n-j)!} \frac{u^{n-1}}{(n-1)!}.
\]
This leads to 
\[
\frac{\partial}{\partial u} G(t,u) - G(t,u) = 
  t \sum_{j \geq 1} \frac{A_{j}(t) u^{j-1}}{(j-1)!} \sum_{n \geq j+1} \frac{u^{n-j}}{(n-j)!}.
\]
The inner sum is simply $e^u-1$, so
\begin{align*}
\frac{\partial}{\partial u} G(t,u) - G(t,u) &= 
  t(e^u-1) \sum_{j \geq 1} \frac{A_{j}(t) u^{j-1}}{(j-1)!} \\
&= t(e^u-1) G(t,u) .
\end{align*}
Hence, we arrive at the differential equation $\frac{\partial G}{\partial u} = (1 + t(e^u-1))G$.
It is now straightforward to verify that the function in \eqref{eq:genFuncForRSPDes}
is the solution to this differential equation we seek.
\end{proof}

As a final observation, a polynomial
sequence $\{S_{n}(t)\}_{n \geq 0}$ is said to be \defin{Sheffer}
if and only if its generating function has the form
\begin{equation*}
\sum_{n=0}^{\infty}\dfrac{S_{n}(t)}{n!}u^n = P(u) e^{t\cdot Q(u)},
\end{equation*}
where $P(u) = P_{0}+ P_{1}u+ P_{2}u^2+ \dotsb$ and
$Q(u) = Q_{1}(u)+Q_{2}u^2+ \dotsb$ with $P_{0}, Q_{1} \neq 0$, see \cite{Steffensen1941,Shiu1982}.
From the generating function $G(t, u)$ in \eqref{eq:genFuncForRSPDes}
it follows that
the sequence $\{ A_{n+1}(t)/n! \}_{n \geq 0}$ is indeed a Sheffer
sequence with $P(u) = e^u$ and $Q(u) = e^u-u-1$.

\begin{question}
 Can we use properties of Sheffer sequences to deduce additional information
 about the polynomials $\{A_{n}(t)\}_{n \geq 1}$?
 Sheffer sequences are closely related to 
 umbral calculus and shift operators.
 Does this carry over to the combinatorial side here in a meaningful way?
\end{question}

\subsection{Realrootedness and interlacing roots}

It is well-known that the Eulerian polynomials, $E_n(t)$,
defined as
\begin{equation}\label{eq:eulerianDef}
  \defin{E_n(t)} \coloneqq \sum_{\sigma \in \symS_n} t^{\des(\sigma)}
\end{equation}
are all real-rooted. In this subsection, we shall prove that 
our polynomials $A_n(t)$ are also real-rooted.
It will be convenient to work
with $R_n(t) \coloneqq t A_n(t)$ as in \eqref{eq:fnkDef},
and we recall the definition of $f_{n,k}$ from there.
%
%
In \cite{NabawandaRakotondrajaoBamunoba2020}, it was proved that
the numbers $f_{n, k}$ satisfy the recurrence relation 
\begin{equation}\label{eq:fnkRec}
f_{n, k} = kf_{n-1, k} + (n-2)f_{n-2, k-1}
\text{ whenever } 1 \leq k < n.
\end{equation}

\begin{lemma}
We have that the $R_n(t)$ satisfy the recurrence
\begin{equation}\label{diff-recurrence}
R_n(t) = tR'_{n-1}(t) + t(n-2)R_{n-2}(t),
\end{equation}
with initial conditions $R_1(t)=R_2(t)=t$.
\end{lemma}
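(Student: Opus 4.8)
The plan is to derive \eqref{diff-recurrence} directly from the coefficient recurrence \eqref{eq:fnkRec}, by multiplying through by $t^k$ and summing over $k$. The two operations appearing on the right-hand side of \eqref{diff-recurrence} match exactly the two terms on the right of \eqref{eq:fnkRec}: applying the operator $t\,\frac{d}{dt}$ to a polynomial $\sum_k c_k t^k$ multiplies its $k$-th coefficient by $k$, while multiplication by $t$ shifts the indexing up by one. So the whole identity should fall out of this dictionary with essentially no work beyond bookkeeping.

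Concretely, I would write $R_m(t) = \sum_k f_{m,k}\, t^k$ for each index $m$ and record the two elementary identities
\[
t R'_{n-1}(t) = \sum_k k\, f_{n-1,k}\, t^k,
\qquad
t(n-2) R_{n-2}(t) = (n-2)\sum_k f_{n-2,k-1}\, t^k .
\]
Adding these, the coefficient of $t^k$ in $t R'_{n-1}(t) + t(n-2) R_{n-2}(t)$ is exactly $k f_{n-1,k} + (n-2) f_{n-2,k-1}$, which equals $f_{n,k}$ by \eqref{eq:fnkRec}. Summing over $k$ then reconstitutes $R_n(t)$, giving the claimed recurrence.

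The one point demanding care — and the only real obstacle — is that \eqref{eq:fnkRec} is stated only for $1 \leq k < n$, whereas the sum above runs over all $k$; I would therefore check that the extra terms vanish on both sides. For $k \leq 0$ everything is zero, since a permutation has at least one run (so $f_{m,k}=0$ for $k\leq 0$) and the factor $k$ annihilates the $k=0$ contribution to $t R'_{n-1}$. For $k \geq n$ everything is again zero: since $A_m(t)$ has degree $\lfloor m/2\rfloor$, the polynomial $R_m(t)=tA_m(t)$ has degree $\lfloor m/2\rfloor+1$, and for $n\geq 3$ this is strictly less than $n$ for $m\in\{n,n-1\}$ and less than $n-1$ for $m=n-2$, so that $f_{n,k}$, $f_{n-1,k}$ and $f_{n-2,k-1}$ all vanish when $k\geq n$. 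Hence \eqref{eq:fnkRec} may be summed freely over all integers $k$ whenever $n\geq 3$. Finally I would dispose of the base cases by direct inspection: $\RSP(1)=\{1\}$ and $\RSP(2)=\{12\}$, each a single permutation with no descents, so $R_1(t)=R_2(t)=t$, completing the proof.
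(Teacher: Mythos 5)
Your proposal is correct and follows essentially the same route as the paper: multiply the coefficient recurrence \eqref{eq:fnkRec} by $t^k$, sum over $k$, and recognize the two resulting sums as $tR'_{n-1}(t)$ and $t(n-2)R_{n-2}(t)$. The only difference is that you explicitly check the boundary terms ($k\leq 0$ and $k\geq n$) and the base cases, which the paper's proof passes over silently.
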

\begin{proof}
By \eqref{eq:fnkRec}, we have that
\begin{align*}
R_n(t) &= \sum_{k} t^{k}\left( kf_{n-1, k} + (n-2)f_{n-2, k-1} \right)\\
&= \sum_{k} t^{k}kf_{n-1, k} + \sum_{k} (n-2)t^{k}f_{n-2, k-1}\\
&= t\sum_{k} k t^{k-1}f_{n-1, k} + t(n-2)\sum_{k-1} t^{k-1}f_{n-2, k-1}.
\end{align*}
This is now recognized as \eqref{diff-recurrence}.
\end{proof}

Our goal is now to use \eqref{diff-recurrence} to show that 
all $R_n(t)$ are real-rooted. We shall in fact prove a much stronger property,
namely that $R_{n-1}(t)$ \emph{interlaces} $R_{n}(t)$ for all $n\geq 1$.
Let $f$ and $g$ be polynomials with real, non-positive
roots $f_{i}$ and $g_{i}$, respectively.
We say that $f$ \defin{interlaces} $g$, and we write $\defin{f \preceq g}$ if
\[
\cdots \leq f_{3}\leq g_{3}\leq f_{2}\leq g_{2}\leq f_{1}\leq g_{1} \leq 0.
\]
Note that $\deg(f)=\deg(g)$ or $\deg(f)+1=\deg(g)$,
and that $0 \preceq t$.

The following lemma is our main tool for proving that our sequence of polynomials are interlacing. 
\begin{lemma}[{D.Wagner, \cite[Sec. 3]{Wagner1992}}]\label{interlacelemma}
Let $f, g, h \in \mathbb{R}[t]$ be real rooted polynomials with
only real, non-positive roots and positive leading coefficients.
Then
\begin{itemize}
\item[(i)] if $f \preceq h$ and $g \preceq h$ then $f+g \preceq h$.
\item[(ii)] if $h \preceq f$ and $h \preceq g$ then $h \preceq f+g$.
\item[(iii)] $g \preceq f$ if and only if $f \preceq tg$.
\end{itemize}
\end{lemma}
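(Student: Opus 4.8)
The plan is to reduce everything to an elementary \emph{sign-at-roots} argument combined with the intermediate value theorem, after first disposing of degeneracies by continuity. Since interlacing is a closed condition and every polynomial with non-positive real roots is a limit of polynomials with distinct, simple, strictly negative roots (preserving all the hypothesized interlacings and the positivity of the leading coefficients), I would first prove each statement under the genericity assumption that the relevant roots are simple and distinct, and then pass to the limit. Throughout, write the roots of $h$ as $h_1 > h_2 > \cdots > h_d$.

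For part (i), I would evaluate $f$ and $g$ at the roots of $h$. Because $f \preceq h$, the roots of $f$ interlace those of $h$ from the inside, so exactly $i-1$ roots of $f$ lie above $h_i$; as $f$ has positive leading coefficient this gives $\operatorname{sign} f(h_i) = (-1)^{i-1}$, and identically $\operatorname{sign} g(h_i) = (-1)^{i-1}$. Hence $f(h_i)$ and $g(h_i)$ always share the same sign, so $(f+g)(h_i)=f(h_i)+g(h_i)$ cannot vanish through cancellation and again satisfies $\operatorname{sign}(f+g)(h_i) = (-1)^{i-1}$. The intermediate value theorem then forces a root of $f+g$ in each gap $(h_{i+1},h_i)$. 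Since the leading coefficients are positive we have $\deg(f+g)=\max(\deg f,\deg g)\le \deg h$, and counting these forced roots against the degree (using the sign of $f+g$ as $x\to-\infty$ to locate a possible last root below $h_d$) pins down exactly the pattern $f+g \preceq h$.

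Part (ii) is handled by the dual computation: now $h \preceq f$ and $h \preceq g$ place the roots of $f$ and of $g$ on the outside, so there are $i$ roots of $f$ above $h_i$, giving $\operatorname{sign} f(h_i) = (-1)^{i}$ and likewise for $g$; once more the two signs agree, $(f+g)(h_i)$ never cancels, and the intermediate value theorem produces a root of $f+g$ in each interior gap while the sign change between $h_1$ and $+\infty$ produces the required root above $h_1$, yielding $h \preceq f+g$ after the degree count. Part (iii) I would prove by bare bookkeeping: multiplying $g$ by $t$ simply inserts a new root at $0$, which---being $\ge$ every (non-positive) root of $g$---sits at the top of the list, so the chain defining $g \preceq f$ transforms term by term into the chain defining $f \preceq tg$; the shift $\deg(tg)=\deg g+1$ matches the degree conditions on both sides, and since every step is reversible the equivalence follows.

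I expect the only real obstacle to be the careful bookkeeping at the boundary and with degrees: ensuring that the $\deg f \ne \deg g$ cases, the placement of the single extra root (below $h_d$ in (i), above $h_1$ in (ii)), and the convention $\deg(f+g)=\max(\deg f,\deg g)$ all line up with the definition of $\preceq$. The genericity reduction must also be justified with a little care---one checks that the limiting interlacings remain valid because the weak inequalities defining $\preceq$ are preserved under limits---but this is routine once the generic case is in hand.
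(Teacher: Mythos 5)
The paper does not actually prove this lemma --- it is quoted from Wagner's paper and used as a black box --- so there is no in-paper argument to compare yours against. Your sign-evaluation-at-the-roots-of-$h$ argument, combined with the intermediate value theorem and a preliminary perturbation to the generic case, is the standard proof of these closure properties, and as a strategy it is sound: the sign computations $\operatorname{sign} f(h_i)=(-1)^{i-1}$ in (i) and $(-1)^i$ in (ii) are correct, the non-cancellation of $f(h_i)+g(h_i)$ is the right key observation, and (iii) really is pure bookkeeping about inserting a root at $0$ at the top of the chain.

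One substantive caveat, which you half-notice when you write $\deg(f+g)=\max(\deg f,\deg g)\le\deg h$ in part (i). The paper's definition of $f\preceq h$ carries the note ``$\deg(f)=\deg(h)$ or $\deg(f)=\deg(h)+1$'', and under that reading the lemma is \emph{false}: take $h=t+1$, $f=(t+2)^2$, $g=(t+3)^2$. Both satisfy the chain $f_2\le f_1\le h_1\le 0$ and the stated degree condition, yet $f+g=2t^2+10t+13$ has non-real roots. The problem is that $\deg f=\deg h+1$ forces two roots of $f$ into the single interval $(-\infty,h_d)$, destroying the strict alternation on which your sign/IVT count relies (your argument would find only $\deg h-1$ roots of $f+g$, leaving an unlocatable pair). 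Your proof implicitly assumes the opposite convention, $\deg f\le\deg h$ when $f\preceq h$ (``roots of $f$ on the inside''), which is the convention under which Wagner's lemma is true, under which your degree counts in (i), (ii), (iii) all close up correctly, and which is also what the paper's own application requires (e.g.\ $R_{n-1}\preceq R_n$ has $\deg R_{n-1}\in\{\deg R_n,\deg R_n-1\}$). So your argument is essentially correct; you should just state explicitly that the degree note in the paper's definition must read $\deg(f)=\deg(g)$ or $\deg(f)=\deg(g)-1$ for the chain, the lemma, and your proof to be mutually consistent. Beyond that, the remaining work (the perturbation to simple distinct roots and the passage to the limit, plus the final placement of the one extra root below $h_d$ in (i) or above $h_1$ in (ii)) is routine as you say.
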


We are now ready to prove the main result in this section.
\begin{theorem}\label{thm:interlacing}
The polynomials 
\[
 R_n(t) = \sum_{\pi \in \RSP(n)} t^{\des(\pi)+1}
\]
satisfy $R_{n-1} \preceq R_{n}$ for all $n \geq 1$.
In particular, they are all real-rooted.
\end{theorem}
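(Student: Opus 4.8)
The plan is to prove the interlacing relation $R_{n-1} \preceq R_n$ by induction on $n$, using the recurrence \eqref{diff-recurrence} together with \cref{interlacelemma}. The base cases are immediate: since $R_1(t) = R_2(t) = t$, both have the single root $0$, so trivially $R_1 \preceq R_2$ (they share the same root, and the degree condition $\deg(R_1) = \deg(R_2)$ is satisfied). For the inductive step, I would assume that $R_{n-2} \preceq R_{n-1}$ (and, if needed for bookkeeping, the preceding relation $R_{n-3} \preceq R_{n-2}$) and aim to deduce $R_{n-1} \preceq R_n$, where $R_n(t) = t R'_{n-1}(t) + t(n-2) R_{n-2}(t)$.

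The key mechanism is to feed the recurrence through the three parts of \cref{interlacelemma}. First I would establish that the derivative interacts well with interlacing: a standard fact (which I would either cite or prove as a short auxiliary observation) is that if a polynomial $g$ has only real, non-positive roots with positive leading coefficient, then $g' \preceq g$, and more generally differentiation preserves the relevant interlacing structure. Combined with part (iii) of \cref{interlacelemma}, which converts between $g \preceq f$ and $f \preceq tg$, the two summands $t R'_{n-1}(t)$ and $t(n-2) R_{n-2}(t)$ can each be shown to interlace $R_{n-1}$ in the appropriate direction. Concretely, I expect to show $R_{n-1} \preceq t R'_{n-1}$ from the derivative fact plus (iii), and $R_{n-1} \preceq t(n-2) R_{n-2}$ from the inductive hypothesis $R_{n-2} \preceq R_{n-1}$ plus (iii) (noting that multiplying by the positive constant $(n-2)$ does not affect roots). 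Then part (ii) of \cref{interlacelemma} lets me add the two summands while preserving the common interlacer, yielding $R_{n-1} \preceq t R'_{n-1} + t(n-2) R_{n-2} = R_n$.

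The main obstacle I anticipate is getting the directions of interlacing exactly right and verifying the degree bookkeeping at each application of \cref{interlacelemma}. The lemma's three parts are sensitive to which polynomial interlaces which, and the parity-dependent degree jumps of the $R_n$ (since $A_n$ has degree $\lfloor n/2 \rfloor$, so $R_n$ has degree $\lfloor n/2\rfloor + 1$) mean I must check that the degree constraint $\deg(f) \in \{\deg(g), \deg(g)+1\}$ holds whenever I invoke the lemma, and that the leading coefficients stay positive. The derivative step is the delicate point: while $g' \preceq g$ is classical for a single real-rooted polynomial, I need the output $t R'_{n-1}$ to interlace with $R_{n-1}$ in the direction demanded by part (ii), so I must carefully track the root at $0$ introduced by the factor of $t$ and confirm it does not violate the interlacing inequalities. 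Once these directional and degree verifications are in place, the additive closure (ii) finishes the induction cleanly, and real-rootedness of every $R_n$ — hence of every $A_n(t) = R_n(t)/t$ — follows as an immediate corollary.
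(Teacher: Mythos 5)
Your proposal is correct and follows essentially the same route as the paper: induction on $n$ using the recurrence \eqref{diff-recurrence}, with Rolle's theorem giving $R'_{n-1} \preceq R_{n-1}$, part (iii) of \cref{interlacelemma} converting this and the induction hypothesis into $R_{n-1} \preceq tR'_{n-1}$ and $R_{n-1} \preceq t(n-2)R_{n-2}$, and part (ii) combining the summands. Your attention to the direction of the interlacing relations is well placed — the directions you state are the ones actually needed to invoke part (ii).
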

\begin{proof}
We first note that the statement is true for $n=1$, 
since $R_0(t)=0$ and $R_1(t)=t$.
We shall proceed by induction over $n$,
so fix $n\geq 2$ and assume that the polynomials
$R_{n-1}(t)$ and $R_{n-2}(t)$ are interlacing i.e.,
$R_{n-2} \preceq R_{n-1}$.

It suffices to prove that $R_{n-1} \preceq R_{n}$.
By Rolle's theorem, we know that $R'_{n-1}$
interlaces $R_{n-1}$ i.e., $R'_{n-1} \preceq R_{n}$.
Together with the induction hypothesis 
and (iii) in \cref{interlacelemma},
we have 
\[
tR'_{n-1} \preceq R_{n-1} \text{ and }t(n-2)R_{n-2}\preceq R_{n-1}.
\]
Now, by (ii) in \cref{interlacelemma},
we have that 
\[
R_{n-1} \preceq tR'_{n-1} + t(n-2)R_{n-2},
\]
and by using \eqref{diff-recurrence},
we can now conclude that $R_{n-1} \preceq R_{n}$.
\end{proof} 

The preceding result also implies that $A_{n-1} \preceq A_{n}$
for all $n>1$, as $A_n(t) = R_n(t)/t$.
In particular, all the $A_{n}$ are real-rooted.

\subsection{Conjectures on stability}

Let $\defin{\setH} \subset \setC$ denote the 
upper half-plane $\{z \in \setC: \mathrm{im}(z) >0 \}$.
A multivariate polynomial $P \in \setC[z_1,\dotsc,z_n]$ 
is called \defin{stable} if it does not vanish on $\setH^n$.
That is, $P$ is stable if
\[
 \zvec^* \in \setH^n \implies P(\zvec) \neq 0.
\]
One can easily show that $P$ is stable if and only if 
$P(\alpha+\lambda t)=0$ has only real zeros
for all $\alpha \in \setR^n$, $\lambda \in \setR_+^n$.
Note that a univariate polynomial with real coefficients is stable if and only if 
all roots are real.

The following theorem is attributed to P.~Br{\"a}nd{\'e}n,
see \cite[Thm. 2.5]{HaglundVisontai2012}, and \cite{BrandenHaglundVisontaiWagner2011}.
\begin{theorem}
Let $\tilde{E}_n(\xvec)$ be defined as
\[
 \tilde{E}_n(\xvec) \coloneqq \sum_{\pi \in \symS_n} \prod_{\pi_j > \pi_{j+1}} x_{\pi_j}.
\]
Then $\tilde{E}_n(\xvec)$ is stable.
\end{theorem}
Note that setting $x_i \to t$, we recover the
classical Eulerian polynomials in \eqref{eq:eulerianDef},
so Bränden's result implies that the Eulerian polynomials are real-rooted.

The following notion was introduced in \cite{LeakeRyder2019}
and is a strictly weaker notion than stability.
\begin{definition}[Same-phase stability]
 A polynomial $p(x_1,\dotsc,x_n) \in \setR[x_1,\dotsc,x_n]$ is said 
 to be \defin{same-phase stable}  if for
 every $\lambda \in \setR_+^n$, we have that the univariate polynomial
 $p(\lambda_1 t,\lambda_2 t,\dotsc,\lambda_n t) \in \setR[t]$ is real-rooted.
\end{definition}

\begin{definition}
A weaker notion of interlacing is \emph{interleaving}
(we use the same terminology as in \cite{Branden2015}).
For $f,g \in \setR[t]$ with positive leading coefficients,
we say that $f$ is an \defin{interleaver} of $g$ if 
\[
\dotsb \leq f_2 \leq g_2 \leq f_1 \leq g_1,
\]
where $\{f_i\}_{i=1}^n$, $\{g_i\}_{i=1}^m$ are the zeros of $f$ and $g$,
respectively. We write this as $\defin{f \interl g }$.
Note that $f \preceq g \implies f \interl g$.

A sequence $\{f_1,f_2,\dotsc,f_n\}$ of polynomials
with non-negative coefficients,
is said to be an \defin{interlacing sequence}\footnote{Yes, we also think this is rather unfortunate terminology.}
if $f_i \interl f_j$  for all $1\leq i < j \leq n$.
We let $\defin{\intLacSec^+_n}$ denote the set of all such interlacing sequences.
\end{definition}
Let $E_n(\xvec)$ be the \defin{multivariate Eulerian polynomial}
\[
 E_n(\xvec) \coloneqq \sum_{\pi \in \symS_n}  \xvec_{\DES(\pi)}.
\]
In contrast with the $\tilde{E}_n(\xvec)$ we mentioned earlier,
the $E_n(\xvec)$ are not stable. For example
\begin{align*}
 E_5(\xvec) &= 
6 x_2 x_1 + 4 x_2 x_3 x_1 + 16 x_3x_1 + 9 x_2 x_4 x_1 \\
  &+x_2 x_3x_4 x_1 + 9 x_3 x_4 x_1 + 11 x_4 x_1 \\
  &+4 x_1+9 x_2+11 x_2x_3+9 x_3+16 x_2 x_4 \\
  &+4 x_2x_3 x_4+6 x_3 x_4+4 x_4+1
\end{align*}
vanishes at
\[
 x_1 = -\frac{39}{16}+\frac{7 i}{512}, \;
 x_2 = -16+i,\;
 x_3 = i,\;
 x_4 = \frac{-6523999+73341 i}{5671874}.
\]
However, we shall show that the $E_n(\xvec)$ 
are same-phase stable, and satisfy a type of interlacing.
\begin{theorem}\label{thm:eulierianMVinterlacing}
Let $\lambda_1,\lambda_2,\dotsc$ be a fixed sequence
of positive real numbers. Then for all $n\geq 1$,
\[
 E_{n-1}(\lambda_1 t,\lambda_2 t,\dotsc,\lambda_n t) \preceq 
 E_{n}(\lambda_1 t,\lambda_2 t,\dotsc,\lambda_n t).
\]
\end{theorem}
\begin{proof}
Let us first refine the polynomial $E_{n}(\xvec)$ by introducing
\[
 \defin{E^i_n(\xvec)} \coloneqq
 \sum_{\substack{\pi \in \symS_n \\ \pi(n)=i}}  \xvec_{\DES(\pi)}.
\]
We have that $E_{n-1}(\xvec) = E_n^n(\xvec)$,
since removing the last entry (which is $n$) in
a permutation counted by $E_n^n(\xvec)$, gives a bijection with 
elements counted by $E_{n-1}(\xvec)$.
In order to make expressions more compact later, we set
\begin{equation*}
   \defin{v^i_n(t)} \coloneqq E^i_n(\lambda_1 t, \lambda_2 t, \dotsc, \lambda_n t) , \qquad 
   \defin{v_n(t)} \coloneqq E_n(\lambda_1 t, \lambda_2 t, \dotsc, \lambda_n t).
\end{equation*}
We shall now prove 
that $\defin{\polySeqV_n} \coloneqq \{  v^i_n(t) \}_{i=1}^n$
is an interlacing sequence.
We follow the same approach as in \cite[Ex.~7.8.8]{Branden2015},
and note that by conditioning on $\pi(n-1)=k$, we have
\begin{equation}\label{eq:interlacingRecursion}
 v^i_{n+1}(t) = \sum_{k \geq i} \lambda_{n} t \cdot  v^k_{n}(t) + 
			    \sum_{k < i} v^k_{n}(t).
\end{equation}
Note that the weak inequality in the second sum is due to standardization.
For example, if $i=2$, and $\pi = 4356172$, then $\pi$
is obtained from the standardization of $\pi$ with the last element dropped, e.g., $324516$.

We now rephrase the recursion in \eqref{eq:interlacingRecursion} as
\begin{equation}
\begin{bmatrix}
v_{n+1}^{n+1} \\
v_{n+1}^n \\
v_{n+1}^{n-1} \\
\vdots \\
v_{n+1}^3 \\
v_{n+1}^2 \\
v_{n+1}^1 
\end{bmatrix}
=
\begin{bmatrix}
1 & 1 & 1 & \dotsc & 1 &  1 \\
\lambda_{n} t & 1 & 1 &  \dotsc & 1 &  1 \\
\lambda_{n} t & \lambda_{n} t & 1 & \dotsc & 1 &  1 \\
\lambda_{n} t & \lambda_{n} t & \lambda_{n} t & \dotsc & 1 &  1 \\
\vdots & \vdots &  & \ddots & \ddots & \vdots \\ 
\lambda_{n} t & \lambda_{n} t & \dotsc & \lambda_{n} t & \lambda_{n} t  &  1\\
\lambda_{n} t & \lambda_{n} t & \dotsc & \lambda_{n} t & \lambda_{n} t & \lambda_{n} t
\end{bmatrix}
\begin{bmatrix}
v_{n}^{n} \\
v_{n}^{n-1} \\
\vdots \\
v_{n}^3 \\
v_{n}^2 \\
v_{n}^1 
\end{bmatrix}
\end{equation}
where we denote the big matrix
by $\defin{G_n} \in \setR^{(n+1) \times n}$.
We now use \cite[Thm.~7.8.5]{Branden2015}, 
which allows us to easily verify that $G_n$
maps $\intLacSec^+_n$ to $\intLacSec^+_{n+1}$,
so (by induction) it follows that $\polySeqV_n$
is an interlacing sequence for all $n$
since the base case $\polySeqV_1 = (1)$ is trivially an 
interlacing sequence.

We now have that
\[
 v_{n-1}(t) = v^n_n(t) 
 \text{ and } v_n(t) = v^1_n(t) + \dotsb + v^n_n(t).
\]
Since $v_{n-1}(t)$ interleaves all
polynomials in $\{ v^i_n(t) \}_{i=1}^n$,
it must also interleave the sum, so $v_{n-1} \interl v_n(t)$.
Since $\deg(v_{n-1}) = \deg(v_n(t))$ or 
$\deg(v_{n-1})+1 = \deg(v_n(t))$,
we can now conclude that $v_{n-1} \preceq v_n(t)$,
and we are done.
\end{proof}

\begin{conjecture}
Let $A_n(\xvec)$ be as in \eqref{eq:rspDesRec1}.
Then $A_n(\xvec)$ is same-phase stable, 
and for all $n\geq 1$, we have that
\[
 A_{n-1}(\lambda_1 t,\lambda_2 t,\dotsc,\lambda_{n-1} t) \preceq
 A_{n}(\lambda_1 t,\lambda_2 t,\dotsc,\lambda_n t),
\]
whenever $\lambda_1,\lambda_2,\dotsc$ is a fixed sequence of positive
real numbers.
\end{conjecture}
Again, same-phase stability is the best we can hope for.
We have that $A_5(\xvec) = 1 + 3 x_1 + 5 x_2+3 x_3 + 3 x_1 x_3$,
but
$
A_5(-3+i,11/5+ 3i/5, i , i) = 0,
$
so $A_5(\xvec)$ has a zero in $\setH^4$, so $A_5(\xvec)$ is not stable.

As in the proof of \cref{thm:eulierianMVinterlacing},
a possible approach is to refine $A_{n}(\xvec)$ by introducing
\[
 A^i_n(\xvec) \coloneqq \sum_{\substack{\pi \in \RSP(n) \\ \pi(2)=i}}
 \prod_{j\in \DES(\pi)} \xvec_{n-j}.
\]
Now let $\lambda_1,\lambda_2,\dotsc$ be
positive real numbers and let $n$ be fixed.
Set
\[
   \defin{a^i_n(t)} \coloneqq
   A^i_n(\lambda_1t, \lambda_2 t, \dotsc, \lambda_n t).
\]
Computer experiments now suggest that $\{a^i_n(t)\}_{i=1}^n$ is an interlacing sequence. Refining based on the last entry of $\pi$
as we did in \cref{thm:eulierianMVinterlacing}
produces the polynomials
\[
 \sum_{\substack{\pi \in \RSP(n) \\ \pi(n)=i}}
 \prod_{j\in \DES(\pi)} \lambda_{n-j} t, \qquad 2 \leq i \leq n,
\]
which also seem to be real-rooted.
However, we do not have interlacing in this situation.

\section{Descents in a permutation after run-sort}\label{C}

We study the distribution
of descents after run-sorting a permutation.
In \cref{ssec:peaks}, we first give some historical background and results concerning peaks in permutations.
We then prove the main technical results in \cref{ssec:peaksAfterSort},
which amount to a type of bijective argument.
In \cref{ssec:conclusion}, we use the previous arguments to prove the main result of this article,
and in the final subsection we study some consequences of our result.

\subsection{Peaks in permutations}\label{ssec:peaks}

We let $\defin{\bh_{n,k}}$ be the number of permutations in $\symS_n$ with exactly $k$ peaks.
There is a nice recursion for the numbers $\bh_{n,k}$ given first in \cite[p.24]{Liagre1855},
and has been discovered numerous times since then.
We have that
\begin{equation}\label{eq:rr1}
\bh_{n,k} = 
\begin{cases}
1  & \text{ if $n=1$, $k=0$}, \\
(2k + 2)\bh_{n - 1, k} + (n- 2(k-1)-2) \bh_{n - 1, k - 1} & \text{ if $0 \leq k < \frac{n}{2}$}, \\
0 & \text{ if $2k \geq n$ or $k<0$}.
\end{cases}
\end{equation}
These numbers can be found in \oeis{A008303}.
Now, set
\[
\defin{\Bh_n(t)} \coloneqq \sum_{j \geq 0} \bh_{n,j} t^j, 
\qquad \defin{G(u,t)} \coloneqq \sum_{n\geq 0} \frac{\Bh_n(t)}{n!} u^n.
\]
In \cite{WarrenSeneta1996}, it is proved that 
\begin{equation}\label{eq:numPeakRecursion}
 \Bh_n(t) = (2+t(n-2)) \Bh_{n-1}(t) + 2t(1-t) \Bh'_{n-1}(t), \text{ for $n \geq 2$}.
\end{equation}
We prove a multivariate generalization of this recursion further down in \cref{prop:mvPVSRec}.
D.~Warren and E.~Seneta also proved that all zeros of $\Bh_n(t)$ are real and that $\Bh_{n-1} \preceq \Bh(n)$
by using \eqref{eq:numPeakRecursion}.
We believe that a multivariate generalization of this holds,
see~\cref{conj:peakValuePolynomialStable} below.

In \cite[Cor. 23]{Kitaev2007} it is shown that
 \begin{align}\label{eq:peakGenFunc}
 G(u, t) &=  - \frac{1}{t} + \frac{\sqrt{t - 1}}{t} \tan(u \sqrt{t - 1} + \arctan(1/\sqrt{t - 1})), \\
          &= \dfrac{\tan(u\sqrt{t-1})}{\sqrt{t-1} - \tan(u\sqrt{t-1})},
\end{align}
where the second follows from the first by using Mathematica.

The first few values of $\Bh_n(t)$ are presented in \cref{tab:peakPolys}.
\begin{table}[!ht]
\begin{tabular}{cr}
$n$  & $\Bh_n(t)$\\
\toprule
 1 & $1$ \\
 2 & $2$ \\
 3 & $2 t+4$ \\
 4 & $16 t+8$ \\
 5 & $16 t^2+88 t+16$ \\
 6 & $272 t^2+416 t+32$ \\
\bottomrule
\end{tabular}
\caption{The polynomials $\Bh_n(t)$.
For example, there are $272$ permutations in $\symS_6$ with $2$ peaks.}\label{tab:peakPolys}
\end{table}

For the remainder of this subsection, we shall focus our attention 
on the multivariate polynomial
\begin{equation}\label{eq:mvPeakPoly}
 \defin{\Bh_n(\xvec)} \coloneqq \sum_{\pi \in \symS_n} \xvec_{\PEAKVAL(\pi)},
\end{equation}
which generalizes $\Bh_n(t)$. 
Note that $\Bh_n(\xvec)$ is \defin{multiaffine} i.e., it is linear in each fixed variable.
We shall use the shorthand $\defin{\partial_{x_j}} \coloneqq \partial/\partial_{x_j}$.

For $\pi \in \symS_{n-1}$ and $a\in[n-1]$, we let 
$\defin{\insStay_a(\pi)} $
denote the permutation obtained from $\pi$ by inserting $n$ immediately after $a$.
We also let $\insStay_\emptyset(\pi)$ denote the permutation obtained from $\pi$
by inserting $n$ before $\pi$ in one-line notation.

The following lemma allows us to track peaks when recursively constructing permutations.
The motivation for the two sub-cases in (5) will be clear later in 
\cref{lem:otherStatistics}. For now, all arguments below do not need
to distinguish the two sub-cases, unless stated explicitly.
\begin{lemma}\label{lem:insertNforPeaks}
For any $n\geq 1$, the bijection 
\[
  \peakBij : \{\emptyset,1,2,\dotsc,n-1 \} \times \symS_{n-1} \to  \symS_{n}
\]
defined via $\defin{\peakBij(a,\pi)} \coloneqq \insStay_a(\pi)$, has the following properties.
For simplicity, we set $\pi' \coloneqq \insStay_a(\pi)$ and 
we let $k$ be the value immediately succeeding $a$ in $\pi$ (unless $a$ is the last entry in $\pi$).
\begin{enumerate}
 \item $a=\emptyset$, so $\PEAKVAL(\pi') = \PEAKVAL(\pi)$.
 
 \item $a$ is the last entry of $\pi$, so $\PEAKVAL(\pi') = \PEAKVAL(\pi)$.
 
 \item \label{en:insertNafterPeak} $a \in \PEAKVAL(\pi)$. Then 
 \[
  \PEAKVAL(\pi') = (\PEAKVAL(\pi) \setminus \{a\}) \cup \{n\}.
 \]
 \item \label{en:insertNbeforePeak}  $k \in \PEAKVAL(\pi)$. Then 
 \[
  \PEAKVAL(\pi') = (\PEAKVAL(\pi) \setminus \{k\}) \cup \{n\}.
 \]
 
 \item $a$ is not the last entry of $\pi$, and neither $a$ or $k$ are in $\PEAKVAL(\pi)$. 
 Then 
 \begin{enumerate}
 \item if $a<k$
 \[
  \PEAKVAL(\pi') = \PEAKVAL(\pi)  \cup \{n\},
 \]
 \item otherwise $a>k$, and we also have
 \[
  \PEAKVAL(\pi') = \PEAKVAL(\pi)  \cup \{n\}.
 \]
\end{enumerate}
\end{enumerate}
\end{lemma}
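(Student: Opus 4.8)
\textbf{Proof plan for \cref{lem:insertNforPeaks}.}
The plan is to verify that $\peakBij$ is a bijection and then check each of the five cases by direct inspection of how inserting $n$ (the largest value) affects the local ascent/descent pattern, and hence the peak-value set. First I would establish the bijectivity: the map sends $(a,\pi) \in \{\emptyset,1,\dotsc,n-1\}\times\symS_{n-1}$ to a permutation in $\symS_n$ by inserting $n$ either immediately after the entry $a$ or (when $a=\emptyset$) at the very front. Since there are exactly $n$ possible gaps into which $n$ can be inserted (before the first entry, and after each of the $n-1$ entries of $\pi$), and the domain has cardinality $n\cdot(n-1)! = n!$, matching $|\symS_n|$, it suffices to observe that the insertion position is recoverable from $\pi'$: one locates $n$ in $\pi'$ and reads off the entry immediately preceding it (or records $\emptyset$ if $n$ is first), while $\pi$ is recovered by deleting $n$. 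This makes $\peakBij$ a bijection.

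For the peak-value analysis I would fix notation: write $\pi = \pi_1\dotsm\pi_{n-1}$, and note that since $n$ is strictly larger than every other entry, inserting it can only create, destroy, or shift peaks in the immediate vicinity of the insertion gap. The key local observation is that $n$ itself becomes a peak-value of $\pi'$ precisely when it has a neighbour on both sides, i.e.\ in every case except $a=\emptyset$ (where $n$ is first) and $a$ being the last entry (where $n$ is last); in those two boundary cases $n$ sits at an end, cannot be a peak, and moreover the only entry whose peak-status could change is the adjacent one, which I would check remains unaffected, giving $\PEAKVAL(\pi')=\PEAKVAL(\pi)$ in cases (1) and (2). For the interior cases, with $a$ not last and $k$ the successor of $a$ in $\pi$, I would examine the three-entry windows around $a$ and $k$: inserting $n$ between them turns $a$ into an ascent-bottom$\to$peak position only through $n$, so I would trace how the peak condition $\pi_{i-1}<\pi_i>\pi_{i+1}$ migrates. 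Specifically, if $a$ was a peak-value of $\pi$, then after insertion $a$ loses its right-neighbour-descent (it now ascends to $n$) and ceases to be a peak, while $n$ becomes one, yielding case \eqref{en:insertNafterPeak}; symmetrically if the successor $k$ was a peak-value, it loses its left-ascent and $n$ absorbs the peak, giving case \eqref{en:insertNbeforePeak}; and if neither $a$ nor $k$ was a peak, no existing peak is disturbed and $n$ is simply adjoined, giving case (5).

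The main obstacle will be case (5) and the boundary interactions: I must rule out the possibility that inserting $n$ inadvertently \emph{creates} a new peak-value among the neighbours (other than $n$ itself) or destroys one not accounted for. The subtlety is that a peak-value is defined by the relation of an entry to \emph{both} its neighbours, so changing one neighbour of $a$ (replacing the former successor $k$ by $n$) could in principle flip the status of $a$, and similarly for $k$. I would resolve this by observing that $n$ is maximal, so the edge $a<n$ is always an ascent and $n>k$ is always a descent; consequently $a$'s left relation is unchanged while its right relation becomes an ascent, and $k$'s right relation is unchanged while its left relation becomes a descent — exactly the two sign-changes needed, and no entry other than $a$, $k$, and $n$ has any neighbour altered. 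Tabulating the four combinations of (is $a$ a peak?)$\times$(is $k$ a peak?) against these fixed edge-signs then forces precisely the five stated outcomes, completing the verification.
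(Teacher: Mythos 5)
Your proposal is correct and follows essentially the same route as the paper: establish bijectivity by recovering $a$ as the entry preceding $n$ and $\pi$ by deleting $n$, then verify the five cases by a local analysis of how the maximal element alters the ascent/descent pattern around the insertion gap (the paper states this case analysis only tersely, while you spell out the edge-sign argument). The only remark the paper adds that you leave implicit is that cases (3) and (4) are mutually exclusive because two peak-values cannot be adjacent, which disposes of one cell of your four-combination table.
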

\begin{proof}
First note that the map $\peakBij$ is indeed a bijection, as we can easily recover $a$
from $\pi'$, and $\pi$ is recovered from $\pi'$ by removing $n$.

The other properties regarding the peaks follow via case-by-case analysis.
We should perhaps clarify why the cases (\ref{en:insertNafterPeak}) and (\ref{en:insertNbeforePeak}) 
are indeed mutually exclusive; two elements in $\PEAKVAL(\pi)$ cannot be adjacent in $\pi$.
\end{proof}

\begin{lemma}\label{lem:casesCount}
Let $\pi \in \symS_{n-1}$ and $m=|\PEAKVAL(\pi)|$ and $r=|\DB(\pi)|$.
We can then count how many values of $a \in \{\emptyset,1,2,\dotsc,n-1\}$,
belong to each case in \cref{lem:insertNforPeaks}.
\begin{enumerate}
 \item For Case 1, there is only one option;
 \item For Case 2, there is only one option;
 \item For Case 3, there are $m$ options;
 \item For Case 4, there are $m$ options;
 \item For Case 5a, there are $(n-2)-r-m$ options;
 \item For Case 5b, there are $r-m$ options.
 \end{enumerate}
\end{lemma}
\begin{proof}
Cases 1--4 follow directly from the the definition of the cases, 
e.g., we can insert $n$ after each peak value, so there are $m$ options in Case~3.

Moreover, if we add all options listed, we get $n$ (as expected),
so it suffices to show that the number of options in Case 5a is $(n-2)-r-m$. 

Consider a permutation $\pi \in \symS_{n-1}$.
Then there are a total of $n-2$ spots where $n$ can be inserted between two entries.
However, there are $m = |\PEAKVAL(\pi)|$ forbidden spots,
as we are not allowed to insert $n$ before a peak. 
Moreover, we are not allowed to insert $n$ between $a$ and $k$,
if $a>k$, as this is Case 5b. There are exactly $r$
such occurrences of $a$.
Hence, what remains are $(n-2)-r-m$ valid 
options of $a$, where $a < k$.
\end{proof}

As a corollary of \cref{lem:insertNforPeaks}, we can now easily deduce \eqref{eq:rr1},
and prove the following multivariate generalization of \eqref{eq:numPeakRecursion}.
\begin{proposition}\label{prop:mvPVSRec}
We have that the multivariate polynomials $\Bh_{n}(\xvec)$ satisfy the recursion 
  \begin{equation}\label{eq:mvPVSRec}
  \Bh_{n} = (2+ (n-2) x_n ) \Bh_{n-1} + 
  2 x_n\sum_{j=3}^n (1 - x_j) \cdot \partial_{x_j} \Bh_{n-1} .
 \end{equation}
\end{proposition}
\begin{proof}
We shall prove that the recursion in \eqref{eq:mvPVSRec}
has the same structure as the recursion described in \cref{lem:insertNforPeaks}.
We rewrite \eqref{eq:mvPVSRec} as
\begin{align*}
  \Bh_{n} &= \Bh_{n-1}  \tag{Case 1}\\
          & +\Bh_{n-1}  \tag{Case 2}\\
          & +  x_n \sum_{a=3}^n \partial_{x_a}  \Bh_{n-1}  \tag{Case 3}\\
          & +  x_n \sum_{k=3}^n \partial_{x_k}  \Bh_{n-1}  \tag{Case 4}\\
          & +  x_n \left[  (n-2)\Bh_{n-1} - x_a \left(\sum_{a=3}^n \partial_{x_a}  \Bh_{n-1} \right)
          - x_k \left(\sum_{k=3}^n \partial_{x_k}  \Bh_{n-1} \right) \right]
          \tag{Case 5}.
\end{align*}
It remains to show that the equation labels indeed correspond with the five cases in \cref{lem:insertNforPeaks}.
That $\Bh_{n-1}$ corresponds to cases~1~and~2 is evident.

In Case~3, we use $\partial_{x_a}$ to 
select all permutations counted by $\Bh_{n-1}$ where $a$ is a peak.
This peak is then replaced by inserting $n$ after $a$, which makes $n$ a new peak.
A similar argument applies in Case~4.

In the last case, for any permutation of length $n-1$,
there are $n-2$ potential spots where we may insert $n$ to make a peak,
and a new permutation $\pi \in \symS_n$.
However, we do not want to include the cases where a previous peak 
is replaced as these are covered by Case~3 and ~4.
We therefore want to exclude the permutations counted 
by $\Bh_{n-1}$, where the entry immediately before $n$ in $\pi$ was a peak,
or the entry immediately after $n$ was a peak.
\end{proof}

As mentioned earlier, it has been shown that the $\Bh_n(t)$
are real-rooted. Computer experiments suggest
the following stronger property for the multivariate version.
\begin{conjecture}\label{conj:peakValuePolynomialStable}
The polynomials $\Bh_n(\xvec) = \sum_{\pi \in \symS_n} \xvec_{\PEAKVAL(\pi)}$,
are all stable.
Moreover, of $\lambda_1,\lambda_2,\dotsc$ is a fixed sequence of positive real numbers,
then 
\[
 \Bh_{n-1}(\lambda_1 t,\lambda_2 t,\dotsc,\lambda_{n-1} t) \preceq
 \Bh_{n}(\lambda_1 t,\lambda_2 t,\dotsc,\lambda_n t) \text{ for all $n>1$}.
\]
\end{conjecture}

A family of multi-affine polynomials of similar flavor 
is considered in \cite[p.10]{BrandenLeander2016x},
which are shown to be \emph{Hurwitz stable}.

\subsection{A recursion which tracks peak-values after run-sort}\label{ssec:peaksAfterSort}

We have a recursive structure of permutations and peak-values
described in \cref{lem:insertNforPeaks}.
Our goal is now to describe an analogous recursive
structure for permutations, but now keeping track
of the set of peak-values after run-sorting.
Again we keep track of the position where $n$ is inserted,
and have five cases corresponding to the cases in \cref{lem:insertNforPeaks}.
However, some operations are more complicated, and we employ some operations 
on runs which are reminiscent of crossover on strands of DNA.
A rough overview of our approach is given in \cref{fig:fiveCasesBijFig}. 
\begin{figure}[!ht]
\centering
\includegraphics[width=0.6\textwidth,page=1]{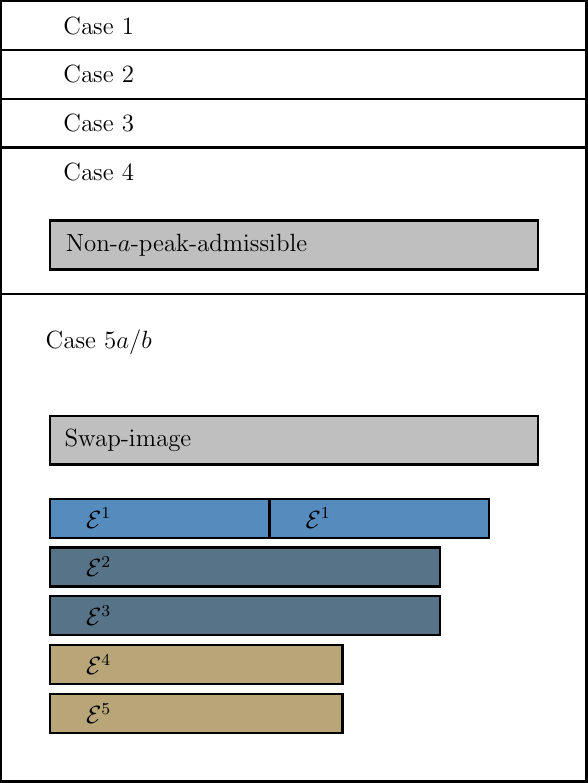} 
\caption{
Cases $1$, $2$ and $3$ are easy to handle, i.e., by simply using $\insStay_a$ on the permutation.
In the remaining cases, we use $\insStay_a$ whenever it has the desired effect.
Otherwise, we do some modification. This modification has to be compensated for later.
This leads to a handful of involutions which cancel the cases not handled by $\insStay_a$.
In the figure, two shaded sets of the same size and color are put in bijective correspondence via such an involution.
The notation and terminology is explained later in this section.
}\label{fig:fiveCasesBijFig}
\end{figure}

Let $\defin{\SPV(\pi)}\coloneqq \PEAKVAL(\runsort(\pi))$
be the set of peak-values obtained after run-sorting $\pi$.
In this subsection, we shall mainly consider the runs of $\pi \in \symS_{n-1}$ 
arranged in lexicographical order.
We illustrate the runs of $\pi$ as shown in \eqref{eq:lexsortPiRunsIntro},
where the $s_i$ are the start of the runs, with $s_1 < s_2 < \dotsb < s_r$:
\begin{equation}\label{eq:lexsortPiRunsIntro}
 \runBlock{s_1}{}{e_1} \;
 \runBlock{s_2}{}{e_2} \;
 \dotsc  \;
 \runBlock{s_r}{}{e_r}.
\end{equation}
By definition, $\SPV(\pi) \subseteq \{ e_1, e_2, \dotsc,e_{r-1} \}$.
\emph{Note that it is not guaranteed that $e_i > s_{i+1}$ for all $i\in[r-1]$, since 
\eqref{eq:lexsortPiRunsIntro} lists the runs of $\pi$, not the runs of $\runsort(\pi)$.
}
In this context, when referring to the runs of $\pi$, the words \emph{last}, \emph{next}, etc.,
are with respect to the ordering in \eqref{eq:lexsortPiRunsIntro}, and not the 
order in which they appear in $\pi$.

From now on, we have a fixed $\defin{\pi} \in \symS_{n-1}$ and $\defin{a} \in \{1,2,\dotsc,n-1\}$
present in the context we are working in.
Let $\defin{\alpha}$ denote the run of $\pi$
containing $a$ and let $\defin{k}$ be the value immediately succeeding $a$, if such 
an entry exist.
Furthermore, $\defin{\beta}$ is the run succeeding $\alpha$ (in lex-order)
and $\defin{\omega}$ is the last run of $\pi$.
Now, either $a$ and $k$ are both in $\alpha$, or $a$ is the largest value in $\alpha$.
In the latter case, either $\alpha$ is not the last run, and $k$ is the smallest entry in $\beta$,
or $a$ is simply the very last entry in $\runsort(\pi)$, see \cref{eq:lexsortPiRunsCases1,eq:lexsortPiRunsCases2,eq:lexsortPiRunsCases3}.

\begin{align}
 \dotsc
 \underset{\alpha}{ \runBlock{\cdots}{ak}{\cdots} } \;
 \underset{\beta}{ \runBlock{}{}{} } \;
 \dotsc  \;
 \underset{\omega}{\runBlock{s_r}{}{e_r}} \label{eq:lexsortPiRunsCases1} \\
 \dotsc
 \underset{\alpha}{ \runBlock{}{}{a} } \;
 \underset{\beta}{ \runBlock{k}{}{} } \;
 \dotsc  \;
 \underset{\omega}{\runBlock{s_r}{}{e_r}} \label{eq:lexsortPiRunsCases2}  \\
 \dotsc
 \dotsc  \;
 \underset{\omega}{\runBlock{s_r}{}{a}}  \label{eq:lexsortPiRunsCases3}
\end{align}

\medskip 

Suppose now $a$ is the penultimate entry
in $\alpha$ and thus $k$ is the largest entry in $\alpha$:
\[
 \dotsc
 \underset{\alpha}{ \runBlock{}{}{ak} } \;
 \underset{\beta}{ \runBlock{}{}{} } \;
 \dotsc 
\]
In the case $k \in \SPV(\pi)$, we say
that $\pi$ is \defin{$a$-peak-admissible} if
\begin{equation}\label{eq:peakAdmissible}
 \SPV(\insStay_a(\pi)) = (\SPV(\pi) \setminus \{k\}) \cup \{n\},
\end{equation}
that is, inserting $n$ after $a$, replaces $k$ by $n$ as a peak value after run-sorting.
(Recall that $\insStay_a(\pi)$ was defined as the permutation obtained
from $\pi$ by inserting $n$ immediately after $a$.)

\begin{example}
For example, consider
\[
 \sigma = 52674318, \text{ and } \pi = 38256714 \text{ which lie in $\symS_8$.}
\]
We have that $\SPV(\sigma) = \{7,8\}$ and $\SPV(\pi) = \{4,7\}$,
$\insStay_6(\sigma) = 526974318$ and $\insStay_6(\pi) = 382569714$,
$\SPV(\insStay_6(\sigma)) = \{8, 9\} = (\SPV(\sigma) \setminus \{7\}) \cup \{9\}$,
$\SPV(\insStay_6(\pi)) = \{4, 8, 9\} \neq (\SPV(\pi) \setminus \{7\}) \cup \{9\}$.

Hence, $\sigma$ is $6$-peak-admissible, but $\pi$ is not.
\end{example}

The following lemma characterizes the $a$-peak-admissible permutations.
\begin{lemma}\label{lem:kAdmissChar}
Let $\pi \in \symS_{n-1}$ be a permutation with $k \in \SPV(\pi)$,
and let $a$ be the entry immediately before $k$:
\begin{equation}
\runBlock{s_1}{}{e_1}
\dotsc
 \underset{\alpha}{ \runBlock{}{}{ak} } \;
 \underset{\beta}{ \runBlock{b}{}{} } \;
 \dotsc  \;
 \underset{\gamma}{\runBlock{s_m}{}{e_m}} 
 \dotsc  \;
 \runBlock{s_r}{}{e_r}.
\end{equation}
Let $m \in [r]$ be the largest value such that $s_m<k$.
Then $\pi$ is $a$-peak-admissible if and only if
\begin{enumerate}[label=(\roman*)]
 \item $e_m<k$, or
 \item $m<r$ and $e_m > s_{m+1}$.
\end{enumerate}
Moreover, if $\pi$ is \emph{not} $a$-peak-admissible, 
then $s_m<k<e_m < s_{m+1}$ and 
\begin{equation}\label{eq:non-a-peak-case}
 \SPV(\insStay_a(\pi)) =  (\SPV(\pi) \setminus \{k\}) \cup \{e_m,n\}.
\end{equation}
\end{lemma}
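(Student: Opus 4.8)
The plan is to understand precisely how run-sorting interacts with the insertion of $n$ after the entry $a$, where $a$ immediately precedes a peak-value $k$ of $\runsort(\pi)$. The key observation is that $\insStay_a(\pi)$ splits the run $\alpha = \dotsc ak$ into two pieces: the prefix ending in $a$, now extended by $n$ (since $a<n$), and the suffix $k\dotsc$ which becomes a new run. Because $k$ was a peak-value, we have $k>b$ where $b$ is the start of $\beta$, so after inserting $n$ the entry $k$ now begins a run whose lex-position must be recomputed. The whole question reduces to tracking where this newly created run starting at $k$ lands in lexicographic order, and whether it still creates a descent (hence a peak) at the junction, or whether it merges/relocates in a way that introduces a spurious extra peak-value.

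First I would set up the runs of $\runsort(\pi)$ explicitly using the picture in the statement, and locate $k$ among the run-starts: let $m$ be the largest index with $s_m<k$. The run starting at $k$ will, after run-sorting $\insStay_a(\pi)$, be inserted immediately after the run starting at $s_m$ (this is what "largest $s_m<k$" encodes). I would then analyze the junction between the $m$-th run (which ends at $e_m$) and the relocated run starting at $k$. The crucial case distinction is whether $e_m<k$ or $e_m>k$. If $e_m<k$ (condition (i)), the relocated run slots in cleanly: the run ending in $n$ now contributes $n$ as the replacement peak-value for $k$, and no new descent is created at the $e_m$--$k$ junction since $e_m<k$ means that junction is an ascent, so $k$ is absorbed without creating an extra peak, giving exactly $(\SPV(\pi)\setminus\{k\})\cup\{n\}$.

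The heart of the argument, and the main obstacle, is the case $e_m>k$. Here the run ending at $e_m$ and the relocated run starting at $k$ abut with $e_m>k$, which is a descent — so $e_m$ threatens to become a new peak-value. The dichotomy in condition (ii) is exactly whether this descent was \emph{already} a descent in $\runsort(\pi)$: if $m<r$ and $e_m>s_{m+1}$, then $e_m$ was already a peak-value of $\runsort(\pi)$ (it already sat at a descent against the next run), so inserting $k$'s run between them does not create a genuinely new peak — one must check carefully that the peak at $e_m$ simply persists and that $k$ is cleanly replaced by $n$, yielding again $(\SPV(\pi)\setminus\{k\})\cup\{n\}$. Conversely, if $e_m<s_{m+1}$ (or $m=r$), then $e_m$ was \emph{not} previously a peak-value, the newly created descent $e_m>k$ genuinely introduces $e_m$ as a new peak, and we land in the non-admissible formula \eqref{eq:non-a-peak-case}, with the extra element $e_m$ appearing alongside $n$.

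I expect the delicate bookkeeping to be in verifying that no \emph{other} peak-values are disturbed when the run starting at $k$ is extracted and reinserted, and in confirming that the remaining suffix of the old $\alpha$-run (after $k$) behaves correctly; one must rule out secondary changes to the peak-value set at the tail $k\dotsc e_\alpha$ of the split run and at the boundary where $\alpha$ used to meet $\beta$. I would handle this by arguing that removing $k$'s run leaves the relative lex-order of all other runs unchanged, so every descent-junction away from the two affected boundaries is preserved verbatim, and then conclude by a finite case check on the two junctions (the extended prefix ending in $n$, and the reinsertion point of $k$'s run). The inequality chain $s_m<k<e_m<s_{m+1}$ in the non-admissible conclusion then follows by negating (i) and (ii) simultaneously, which pins down the extra peak-value to be exactly $e_m$.
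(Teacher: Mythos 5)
Your argument is correct and follows essentially the same route as the paper: inserting $n$ after $a$ detaches $k$ as a new run, which relocates in lex order to immediately after the run $s_m\dotsc e_m$, and admissibility is decided at that single junction --- either $e_m<k$ (ascent, no new peak) or $e_m>s_{m+1}$ (so $e_m$ was already a peak and merely persists), with the complementary case $s_m<k<e_m<s_{m+1}$ promoting $e_m$ to a new peak-value exactly as in \eqref{eq:non-a-peak-case}. The only small imprecision is your reference to a ``suffix $k\dotsc$'' of $\alpha$: since $k$ is a peak-value it is necessarily the last entry of its run, so the detached piece is always the singleton $k$ and no tail bookkeeping is needed.
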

\begin{proof}
First, it is clear that the number $m$ exists; since $k \in \SPV(\pi)$,
we know that $b<k$, so the set of runs where the first entry is less than $k$
is non-empty. 

Note that by the choice of $m$, we have additional inequalities in the two cases:
\begin{enumerate}[label=(\roman*)]
 \item $e_m < k < s_{m+1}$, or
 \item $m<r$ and $k < s_{m+1} < e_m$.
\end{enumerate}
Suppose $\pi$ is $a$-peak-admissible and let $\pi' = \insStay_a(\pi)$.

Consider the lex-largest run of $\pi'$, where the starting point $s_m < k$.
We have two cases to consider here i.e., whether $m=r$ or $m<r$.
(i) If $m=r$, then $k$ appears at the end of $\pi'$ and
hence will not be an element in $\SPV(\pi')$.
Otherwise if $r<m$ and $k < s_{m+1}$, then $k$ appears immediately after $\gamma$ in $\pi'$
and before $s_{m+1}$ in $\pi'$. Hence, $k$ is again not a peak-value of $\runsort(\pi')$.
Moreover, $e_m \notin \SPV(\pi)$ and we still have $e_m \notin \SPV(\pi')$.

(ii) Since $m < r$, we know that $k < s_{m+1} $. Moreover, since $e_m > s_{m+1}$
we have that $e_m \in \SPV(\pi)$. 
Again, $k$ appears immediately after $e_m$ in $\runsort(\pi')$
and we still have $e_m \in \SPV(\pi')$.
Hence, we have now shown that $\pi$ is $a$-peak-admissible in both cases listed.

\medskip 
It is now straightforward to verify that the condition $s_m <k < e_m < s_{m+1}$
is precisely the complement of the events (i) and (ii) above.
Thus, we only need to show that $\SPV(\insStay_a(\pi)) =  (\SPV(\pi) \setminus \{k\}) \cup \{e_m,n\}$,
whenever $s_m <k < e_m < s_{m+1}$. 
But from these inequalities, we can deduce
that $e_m \notin \SPV(\pi)$ and that the runs of $\insStay_a(\pi)$
are arranged lexicographically as
\begin{equation}
\runBlock{s_1}{}{e_1}
\dotsc
 \underset{\alpha}{ \runBlock{}{}{an} } \;
 \underset{\beta}{ \runBlock{b}{}{} } \;
 \dotsc  \;
 \underset{\gamma}{\runBlock{s_m}{}{e_m}}
 \;
 \singleBlock{k}
 \runBlock{s_{m+1}}{}{e_{m+1}}
\end{equation}
It is now evident that $e_m \in \SPV(\insStay_a(\pi))$, 
and that \eqref{eq:non-a-peak-case} is true.
\end{proof}

\bigskip 

\begin{definition}

Suppose now that $\pi \in \symS_{n-1}$, $k \in \SPV(\pi)$
and $a$ preceding $k$, but $\pi$ is \emph{not} $a$-peak-admissible.
We then construct a permutation denoted $\insSwap_a(\pi)$ via the following procedure.
\begin{enumerate}[label=({\roman*})]
\item Let
\begin{equation}\label{eq:runsOfPi}
\dotsc
 \underset{\alpha}{ \runBlock{}{}{ak} } \;
 \underset{\beta}{ \runBlock{b}{}{} } \;
 \dotsc  \;
 \underset{\gamma}{\runBlock{s_m}{}{e_m}} 
 \dotsc  \;
\end{equation}
be the runs of $\pi$, as in \cref{lem:kAdmissChar}.

\item Let $\gamma$ be the largest run with $s_m<k<e_m$ 
(there is such a run, according to \cref{lem:kAdmissChar}).

\item We split $\gamma$ as $\gamma_1 \gamma_2$, 
where $\gamma_1$ is the prefix of $\gamma$
consisting of the elements which are less than $k$,
while $\gamma_2$ are the remaining elements.
\[
 \underset{\gamma}{\runBlock{s_m}{}{e_m}}  = 
 \underset{\gamma_1}{\shortBlock{s_m}{}}
 \underset{\gamma_2}{\shortBlock{}{e_m}}
\]
We let $\defin{\insSwap_a(\pi)}$ be the permutation obtained from $\pi$
by moving the elements in $\gamma_2$, and inserting them immediately after $k$.
Hence, the lex-sorted runs of $\insSwap_a(\pi)$ are 
\begin{equation}\label{eq:insSwapDef}
\dotsc
 \underset{\alpha}{ \runBlock{}{}{ak} }
 \underset{\gamma_2}{\shortBlock{}{e_m}} \;
 \;
 \underset{\beta}{ \runBlock{b}{}{} } \;
 \dotsc  \;
 \underset{\gamma_1}{ \shortBlock{s_m}{} }
 \;
 \runBlock{s_{m+1}}{}{e_{m+1}}
 \dotsc  \;
\end{equation}

\end{enumerate}
\end{definition}

The run $\gamma$ does not need to be located somewhere after $\alpha$ in $\pi$,
it can be located anywhere in $\pi$.
The run immediately after $\gamma_1$
in $\insSwap_a(\pi)$, (let's call it $\delta$) must start with something 
less than $e_m$ since otherwise, $e_m$ would not be the largest 
value in its run. Moreover, since $s_{m+1}$ is the smallest starting 
value of a run greater than $k$, and $s_{m+1}>e_m$ (because $\pi$ is non-$a$-peak-admissible),
it follows that the first entry of $\delta$ must be less than $s_m$.
This implies that $\gamma_1$ does not `merge' with $\delta$,
as $\gamma_2$ is moved to a different part of $\pi$.
In other words, $\gamma_1$ is still a run in $\insSwap_a(\pi)$.
The runs of $\insStay_a(\insSwap_a(\pi))$ in lexicographical order are therefore
 \begin{equation}\label{eq:insSwapLexSorted}
\dotsc
 \underset{\alpha}{ \runBlock{}{}{an} } \;
 \underset{\beta}{ \runBlock{b}{}{} } \;
 \dotsc  \;
 \underset{\gamma_1}{ \shortBlock{s_m}{} }
 \singleBlock{k}
 \underset{\gamma_2}{\shortBlock{}{e_m}} \;
 \runBlock{s_{m+1}}{}{e_{m+1}}
 \dotsc  \;.
\end{equation}

\begin{example}
We use the same notation as in the description of $\insSwap_a$.
Let $\pi = 38256714$ and $a=6$, $k=7$.
The runs of $\pi$ in lex-order are
\[
  14 \; 2567\; 38.
\]
Then $\gamma = 38$ and $\gamma_1 = 3$, $\gamma_2 = 8$.
Hence, $\insSwap_a(\pi) = 32567814$ because $8$ is moved and inserted after $k=7$.
\end{example}

\begin{lemma}\label{lem:swap}
Let $D_n$ be the subset of pairs $(a,\pi) \in  [n-1] \times \symS_{n-1}$
such that $a$ immediately precedes an entry in $\SPV(\pi)$,
but $\pi$ is not $a$-peak-admissible.

Then the map from $D_n$ to $\symS_n$ defined by 
\begin{equation}\label{eq:swapStayMap}
 (a, \pi) \mapsto \insStay_a(\insSwap_a(\pi))
\end{equation}
is well-defined and injective.
Moreover, for $(a,\pi) \in D_n$, we have that
\begin{equation}\label{eq:peakNonAdmissible}
 \SPV(\insStay_a(\insSwap_a(\pi))) = (\SPV(\pi) \setminus \{k\}) \cup \{n\}.
\end{equation}
\end{lemma}
\begin{proof}
Since $\insSwap_a(\pi)$ is well-defined for $(a,\pi)\in D_n$,
it follows that the map in \eqref{eq:swapStayMap} is also well-defined. 
Now, given the output of $\insStay_a$, we can recover $a$
(it is the entry immediately to the left of $n$) and thus $k$.
It now suffices to show that $\insSwap_a$ is injective on the set of permutations 
which are not $a$-peak-admissible. However, given the output of $\insSwap_a(\pi)$
for some known $a$, we can identify $k$ and then also $\gamma_2$ for $\pi$.
Now $\gamma_1$ is the lex-largest run whose first entry is less than $k$.
It is now clear that we can recover $\pi$ from $\insStay_a(\insSwap_a(\pi))$.
The fact that \eqref{eq:peakNonAdmissible} holds follows from comparing 
\eqref{eq:runsOfPi} with \eqref{eq:insSwapLexSorted}
---it is evident that $n$ is a peak-value, and that $k$ is now a starting point of a run in $\pi$ (so in particular, not a peak-value). All other peak-values after run-sort are preserved.
\end{proof}

\bigskip 

\begin{definition}[Swap-image]
If there is some non-$a$-peak-admissible 
$\sigma \in \symS_{n-1}$ such that $\sigma = \insSwap_a(\pi)$,
we say that $\pi \in \symS_{n-1}$ is in the \defin{swap-image}.
\end{definition}
The permutations which are not in the swap-image,
are handled later in this section.

\bigskip 

Next, we treat the analog of the last
case in \cref{lem:insertNforPeaks},
where we insert $n$ in such a way that it creates a new peak.
For $\pi \in \symS_{n-1}$, let $\defin{\slopeSet(\pi)}$ be the values of $a \in [n-1]$
such that if we insert $n$ after $a$ in $\runsort(\pi)$, 
$n$ becomes a peak-value and all other peak values are
preserved\footnote{Note: Inserting $n$ does not preserve being run-sorted in general.}.
If we let $(\pi'_1, \pi'_2,\dotsc,\pi'_{n-1}) \coloneqq \runsort(\pi)$,
then $\slopeSet(\pi)$ can be
described more explicitly as
\begin{align*}
\slopeSet(\pi) &= 
\left\{ a : \SPV(\insStay_a(\pi)) = \SPV(\pi) \cup \{n\} \right\} \\
&= \left\{ \pi'_i :
 \begin{array}{ll}
   \text{$i \leq n-2 $, $\pi'_i \notin \SPV(\pi)$, and $\pi'_{i+1} \notin \SPV(\pi)$}
   \end{array}
   \right\}. 
\end{align*}

\begin{example}\label{ex:kslopeAdmissible}
Some permutations and their sets $\slopeSet(\pi)$
are shown below.
\begin{center}
\begin{tabular}{lll}
 Permutation $\pi$ & $\runsort(\pi)$ & $\slopeSet(\pi)$ \\
 \toprule 
 $2561734$ &  $1725634$ & $\{2,3\}$ \\
 $4567123$ &  $1234567$ & $\{1,2,3,4,5,6\}$ \\
 $4 3 7 1 6 2 5$ &  $1625374$ & $\{  \}$ \\
 \bottomrule
\end{tabular}
\end{center}
\end{example}

We say that $\pi$ is \defin{$a$-slope-admissible} if 
\begin{equation}\label{eq:slopeAdmissible}
  a \in \slopeSet(\pi) \text{ and } \SPV(\insStay_a(\pi)) = \SPV(\pi) \cup \{n\}.
\end{equation}

The next lemma contains additional technical results
needed later for the main conclusion.
We characterize the $a$-slope-admissible permutations
and show that these are never in the swap-image.
We also describe all non-$a$-slope-admissible permutations
which are not in the swap-image.
\begin{lemma}\label{lem:aAdmissChar}
Let $\pi \in \symS_{n-1}$ be a permutation with $a \in \slopeSet(\pi)$
and write $\alpha$  as $\defin{\alpha_1}\; a \; \defin{\alpha_2}$:
\begin{equation}
\dotsc
 \underset{\alpha}{ \runBlock{}{a}{} } \;
 \underset{\beta}{ \runBlock{b}{}{} } \;
 \dotsc  \;
 \underset{\gamma}{\runBlock{s_{m+1}}{}{e_{m+1}}}
 \dotsc  \;.
\end{equation}
In the case $\alpha_2 \neq \emptyset$, let \defin{$k$} and \defin{$l$} be the smallest and largest
values in $\alpha_2$, respectively.
Moreover, let $\defin{m} \in [r-1]$ be the smallest value where $s_{m+1}>k$, if such an $m$ exists.
With all these definitions, the runs of $\pi$ are as follows:
\begin{equation}\label{eq:aSlopeGeneric}
\dotsc
\underset{\alpha_1}{\shortBlock{}{}}
 \singleBlock{a}
 \underset{\alpha_2}{\shortBlock{k}{l}} \;
 \underset{\beta}{ \runBlock{b}{}{} } \;
 \dotsc  \;
 \runBlock{s_{m+1}}{}{e_{m+1}} 
 \dotsc. 
\end{equation}
\medskip 
\noindent
\textbf{Statement 1}: We have that $\pi$ is $a$-slope-admissible if and only if
\begin{enumerate}[label=(\roman*)]
 \item $\alpha_2 = \emptyset$, or
 \item $\alpha$ is the lex-largest run, or
 \item $k<b$, or
 \item $m$ exists, $l > s_{m+1}$
 and 
 $
  e_{m} < k \iff e_{m} < s_{m+1}.
 $
\end{enumerate}

\medskip 
\noindent
\textbf{Statement 2}: We have that $\pi$ is \emph{not} $a$-slope-admissible if and only if
\begin{enumerate}[label=(\roman*')]
 \item $\alpha_2$ is non-empty,
 \item $\alpha$ is not the last run (so $b$ exists),
 \item $b<k$ and
 \item either 
 \[
   m \text{ does not exist, or } 
   l < s_{m+1} \text{ or }
   k < e_{m} < s_{m+1}.
 \]
\end{enumerate}

\medskip 
\noindent
\textbf{Statement 3}: If $\pi$ is in the swap-image,
then $\pi$ is not $a$-peak-admissible.

\medskip 
\noindent
\textbf{Statement 4}: if $\pi$ is not $a$-peak-admissible
and not in the swap-image, then one of the following conditions hold:
\begin{enumerate} 
 \item \label{it:ee1}
 $m$ does not exist, (so $s_j<k$ for all $j\in[r]$),
 $e_r>k$ and
 \[
 \SPV(\insStay_a(\pi)) = (\SPV(\pi)\setminus\{l\})\cup \{e_r,n\}.
\]

\item \label{it:ee2}
$k< l < e_m < s_{m+1}$, and
\[
 \SPV(\insStay_a(\pi)) = (\SPV(\pi)\setminus\{l\})\cup \{e_m,n\}.
\]

\item  \label{it:ee3}
$k< e_m <l< s_{m+1}$, and
\[
 \SPV(\insStay_a(\pi)) = (\SPV(\pi)\setminus\{l\})\cup \{e_m,n\}.
\]

\item  \label{it:ee4}
$k< l < s_{m+1} < e_m$, and
\[
 \SPV(\insStay_a(\pi)) = (\SPV(\pi)\setminus\{l\})\cup \{n\}.
\]

\item  \label{it:ee5}
$k<e_m<s_{m+1}<l$, and
 \[
\SPV(\insStay_a(\pi)) = \SPV(\pi)\cup \{e_m,n\}.
\]
\end{enumerate}
\end{lemma}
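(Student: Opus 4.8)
The plan is to derive all four statements from one computation describing how $\runsort$ and the peak--value set react to inserting $n$ after $a$. Write $\alpha=\alpha_1\,a\,\alpha_2$ for the run of $\pi$ containing $a$. Inserting $n$ after $a$ turns $\alpha$ into the run $\hat\alpha=\alpha_1\,a\,n$, which ends in the global maximum and keeps the first letter of $\alpha$, hence keeps $\alpha$'s lexicographic slot; and, when $\alpha_2\neq\emptyset$, it releases the run $\alpha_2=k\cdots l$, which is re-inserted immediately before the first run $\rho_{m+1}$ whose start exceeds $k$ (at the very end if no such run exists, and directly after $\hat\alpha$ when $k<b$, i.e. $m=i$). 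First I would record that, apart from the newly created $n$ (always a peak, at the end of $\hat\alpha$), at most two pre-existing positions can change peak status: the top $l$ of $\alpha$, which migrates to the end of $\alpha_2$ and is a peak of $\runsort(\insStay_a(\pi))$ iff $l>s_{m+1}$; and, when $b<k$ (so $m>i$), the value $e_m$, whose right neighbour switches from $s_{m+1}$ to $k$, making it a peak iff $e_m>k$. Every other junction keeps both neighbours, so all other peak values are untouched; in $\runsort(\pi)$ itself $l$ is a peak iff $l>b$ and $e_m$ a peak iff $e_m>s_{m+1}$.

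With this dictionary, Statement~1 says that the net effect is the sole gain of $n$. The three degenerate placements are admissible for free: $\alpha_2=\emptyset$ merely lengthens $\alpha$ to $\alpha\,n$ (case (i)); if $\alpha$ is lex-largest then $\alpha_2$ lands at the end, where $l$ was never a peak (case (ii)); and if $k<b$ then $\alpha_2$ sits directly after $\hat\alpha$ with right neighbour still $\beta$, leaving the relative order of $l$ and $b$---and hence $l$'s status---intact (case (iii)). In the generic placement $b<k$, preservation of $\SPV$ up to $n$ amounts to $l$ surviving, i.e. $l>s_{m+1}$, together with $e_m$ keeping its status, i.e. $(e_m<k)\iff(e_m<s_{m+1})$; this is exactly condition (iv). Statement~2 I would obtain as the De~Morgan negation of Statement~1: since $k<s_{m+1}$ always, the failure of the equivalence for $e_m$ collapses to $k<e_m<s_{m+1}$, which produces precisely clauses (i$'$)--(iii$'$).

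For Statement~3 the point is to get the direction right: I would show that $a$-slope-admissibility \emph{precludes} membership in the swap-image (equivalently, being in the swap-image forces non-admissibility), not the reverse. Suppose $\pi=\insSwap_a(\sigma)$ for a non-$a$-peak-admissible $\sigma$, carrying the data $s_m<k<e_m<s_{m+1}$ of \cref{lem:kAdmissChar}. The swap preserves every run's starting letter, so the run of $\pi$ holding $\gamma_2$ ends in $e_m$ and is immediately followed in lex order by $\beta$, whose start $b$ satisfies $b<k<e_m$; hence $e_m\in\SPV(\pi)$. On the other hand $e_m<s_{m+1}$ forces $\gamma$ to merge forward in $\runsort(\sigma)$, so $e_m\notin\SPV(\sigma)$. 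Now \eqref{eq:peakNonAdmissible} of \cref{lem:swap} gives $\SPV(\insStay_a(\pi))=(\SPV(\sigma)\setminus\{k\})\cup\{n\}$, which omits $e_m$; were $\pi$ admissible we would instead have $\SPV(\insStay_a(\pi))=\SPV(\pi)\cup\{n\}\ni e_m$, a contradiction. (One may also note that in the swap-image the entry following $a$ is $k\notin\SPV(\pi)$, so the admissibility hypothesis never even applies.)

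For Statement~4 I would feed the non-admissibility clauses of Statement~2 into the same bookkeeping, the one extra ingredient being that $\pi\notin$ swap-image forces $e_m>k$ (and $e_r>k$ when $m$ is absent). Indeed $e_m<k$ would make $\rho_m$ and $\alpha_2$ merge on run-sorting $\insStay_a(\pi)$, reconstituting a single run $s_m\cdots e_m\,k\cdots l$; reversing the definition of $\insSwap_a$ then realises $\pi$ as $\insSwap_a(\sigma)$ for a suitable non-$a$-peak-admissible $\sigma$, contradicting the hypothesis. Granting $e_m>k$, the three clauses of Statement~2 refine according to the order of $e_m$ and $l$ relative to $s_{m+1}$: absent $m$ is \cref{it:ee1}; $e_m,l<s_{m+1}$ gives the two orderings \cref{it:ee2} and \cref{it:ee3}; $l<s_{m+1}<e_m$ gives \cref{it:ee4}; and $e_m<s_{m+1}<l$ gives \cref{it:ee5}. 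In each case $\SPV(\insStay_a(\pi))$ is read off directly: $n$ is gained, $l$ is lost exactly when $l<s_{m+1}$, and $e_m$ becomes a new peak exactly when it was not already one, i.e. when $e_m<s_{m+1}$ (with $e_r$ in the role of $e_m$ when $m$ is absent). I expect the genuine obstacle to be this last reduction---the equivalence between the excluded ordering $e_m<k$ and membership in the swap-image---since it is exactly the place where one must reverse $\insSwap_a$ and verify that the reconstructed $\sigma$ is honestly non-$a$-peak-admissible.
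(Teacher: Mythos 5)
Your proposal is correct, and for Statements~1 and~2 it is in substance the paper's own argument, just organized more efficiently: your ``dictionary'' --- inserting $n$ after $a$ alters exactly two junctions in the lex-ordered run list, namely the right neighbour of $e_m$ (from $s_{m+1}$ to $k$) and the right neighbour of $l$ (from $b$ to $s_{m+1}$, or to nothing when $m$ is absent), with $n$ the only other new peak --- is precisely what the paper's displayed case analyses of $\runsort(\insStay_a(\pi))$ compute, and obtaining Statement~2 as the formal complement of Statement~1 (using $k<s_{m+1}$ to collapse the failed equivalence to $k<e_m<s_{m+1}$) matches the paper's two-directional case check. Where you genuinely diverge is in Statements~3 and~4. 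The paper proves Statement~3 by directly verifying that every swap-image permutation satisfies the non-admissibility clauses (i')--(iii') of Statement~2; you instead argue by contradiction, noting $e_m\in\SPV(\pi)$ while $e_m\notin\SPV(\sigma)$, and playing the formula \eqref{eq:peakNonAdmissible} of \cref{lem:swap} against the defining equation of slope-admissibility --- this is shorter, and it also correctly repairs the statement's typo (``$a$-peak-admissible'' should read ``$a$-slope-admissible'', as the paper's own proof tacitly does; your parenthetical that $k\notin\SPV(\pi)$ disposes of the literal reading). For Statement~4, the paper only asserts that ``a more careful case-by-case analysis'' identifies the swap-image with the $e_m<k$ (resp.\ $e_r<k$) outcomes; you supply the missing content, namely the explicit inversion of $\insSwap_a$: when $e_m<k$ (which within the non-admissible cases forces $l<s_{m+1}$), moving $\alpha_2\setminus\{k\}$ to sit after $e_m$ reconstitutes a $\sigma$ with $s_m<k<l<s_{m+1}$, which is non-$a$-peak-admissible by \cref{lem:kAdmissChar}, so $\pi=\insSwap_a(\sigma)$. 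One point you assert rather than verify, but which does hold: the inverse really leaves the run of $\sigma$ containing $a$ ending at $k$, because the entry $x$ following $l$ in $\pi$ satisfies $x<k$ (any run-start exceeding $k$ is at least $s_{m+1}>l>x$). The resulting enumeration of the orderings of $l$, $e_m$, $s_{m+1}$ into cases (1)--(5), with $l$ lost exactly when $l<s_{m+1}$ and $e_m$ gained exactly when $e_m<s_{m+1}$, agrees case by case with the paper's displayed $\SPV$ computations, so nothing is missing.
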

\begin{proof}
\noindent
\textbf{Proof of statement 1}: 
We first verify that cases (i)--(iv) are indeed $a$-slope-admissible.
The first three cases are easy, since inserting $n$ after $a$
splits $\alpha$ into two smaller runs, but the two resulting runs are
adjacent in the lex-order, as in \eqref{eq:aSlopeGeneric}.
Thus, only case (iv) requires some more work.

So first we suppose that $l > s_{m+1}$ and that both $e_{m} < k$ and $e_{m} < s_{m+1}$
hold, i.e., $e_m$ is not a peak-value.
Then, inserting $n$ after $a$ in \eqref{eq:aSlopeGeneric}, 
gives that $\insStay_a(\pi)$ has the runs
\begin{equation}\label{eq:aSlopeGenericAfterAStay}
\dotsc
\underset{\alpha_1}{\shortBlock{}{an}} \;
 \underset{\beta}{ \runBlock{b}{}{} } \;
 \dotsc  \;
 \runBlock{s_{m}}{}{e_{m}} \;
  \underset{\alpha_2}{\shortBlock{k}{l}} \;
 \runBlock{s_{m+1}}{}{e_{m+1}}
 \dotsc 
\end{equation}
Since $l > s_{m+1}$, we still have that $l$ is a peak-value,
and $e_m<k$ implies that $e_m$ is still not a peak-value.
Hence, $\pi$ is $a$-peak-admissible.
The second sub-case, when $l > s_{m+1}$ but both $e_{m} < k$ and $e_{m} < s_{m+1}$ are false,
is treated in a similar manner.

\medskip 
\noindent
\textbf{Proof of statement 2}: 
We now show that $\pi$ is \emph{not} $a$-slope-admissible if
$\alpha_2 \neq \emptyset$, $b<k$ and one of the cases
\[
\text{(a)}\quad \text{$m$ does not exist}, \qquad
\text{(b)}\quad l < s_{m+1}, \qquad
\text{(c)}\quad  k < e_m < s_{m+1}
\]
hold.

\medskip 
\noindent
\textbf{In case (a)}, we have that $s_j<k$ for all $j \in [r]$, so the runs of $\insStay_a(\pi)$
in lex-order are
\begin{equation}\label{eq:casea}
\dotsc
\underset{\alpha_1}{\shortBlock{}{an}} \;
 \underset{\beta}{ \runBlock{b}{}{} } \;
 \dotsc  \;
 \runBlock{s_{r}}{}{e_{r}} \;
 \underset{\alpha_2}{\shortBlock{k}{l}}.
\end{equation}
Hence, 
\begin{align}
\SPV(\insStay_a(\pi)) = 
\begin{cases}
 (\SPV(\pi)\setminus\{l\})\cup \{n\} & \text{ if } e_r<k  \label{eq:casea1}\\
 (\SPV(\pi)\setminus\{l\})\cup \{e_r,n\}& \text{ if } e_r>k,
\end{cases}
\end{align}
and neither case is therefore $a$-peak-admissible.

\medskip 
\noindent
\textbf{In cases (b) and} (c), we get the situation
illustrated in \eqref{eq:aSlopeGenericAfterAStay},
but with
\[
\SPV(\insStay_a(\pi)) = 
\begin{cases}
  (\SPV(\pi)\setminus\{l\})\cup \{n\}  & \text{ if } e_m<k, \; e_m < s_{m+1}, \; l< s_{m+1} \\
  (\SPV(\pi)\setminus\{l\})\cup \{n\} & \text{ if } e_m>k, \; e_m > s_{m+1}, \; l< s_{m+1} \\
  (\SPV(\pi)\setminus\{l\})\cup \{e_m,n\} & \text{ if } k<e_m < s_{m+1}, \; l< s_{m+1} \\
  \SPV(\pi)\cup \{e_m,n\} & \text{ if } k< e_m < s_{m+1}, \; l> s_{m+1}. 
\end{cases}
\]
In order for $\pi$ to be $a$-slope-admissible, we must still have $l$
as a peak-value, and $n$ as the only additional peak-value--this is not the
case in any of the outcomes.

\medskip 
\noindent
\textbf{Proof of statement 3}: 
We shall now show that if $\pi$ is in the swap-image,
then $\pi$ is not $a$-slope-admissible.

Suppose $\pi = \insSwap_a(\sigma)$ for some 
non-$a$-peak-admissible $\sigma \in \symS_{n-1}$.
The runs of $\pi$, according to \eqref{eq:insSwapDef}, are
\begin{align*}
&\dotsc
 \underset{\alpha}{ \runBlock{}{}{ak} \shortBlock{}{e'_r}} \;
 \underset{\beta}{ \runBlock{b}{}{} } \;
 \dotsc  \;
 \underset{\gamma_1}{ \shortBlock{s'_r}{} }  \\
&\text{or} \\
&
\dotsc
 \underset{\alpha}{ \runBlock{}{}{ak} \shortBlock{}{e'_m}} \;
 \underset{\beta}{ \runBlock{b}{}{} } \;
 \dotsc  \;
 \underset{\gamma_1}{ \shortBlock{s'_m}{} }
 \runBlock{s'_{m+1}}{}{e'_{m+1}}
 \dotsc  \;
\end{align*}
where $e'_m < s'_{m+1}$ and the last entry of $\gamma_1$ is less than $k$.
We now must verify conditions (i')--(iv') for $\pi$.
The first three conditions are easy to verify. 
Now, if $\gamma_1$ is the last run of $\pi$, we have that 
case (a) of (iv') is satisfied (there is no run of $\pi$
where the first value exceeds $k$).

If $\gamma_1$ is not the last run, we know that $e'_m<s'_{m+1}$,
so $l=e'_m < s'_{m+1}=s_{m+1}$ and thus the condition in case (b)
is satisfied.

\medskip 
\noindent
\textbf{Proof of statement 4}: 
We shall now refine the previous and see exactly what 
is possible for $\pi$ being non-$a$-slope-admissible 
and also not in the swap-image.

A more careful case-by-case analysis as before,
shows that $\pi$ is in the swap-image only if 
it satisfies the first of the four cases listed in (b)--(c).
We can then conclude Statement 4.
\end{proof}

Let $\eeSet^j_{n,a}$ be the set of permutations $\pi \in \symS_{n-1}$ such 
that
\begin{itemize}
 \item $a \in \slopeSet(\pi)$,
 \item $\pi$ is not $a$-slope-admissible, and 
 \item $\pi$ satisfies Case~($j$) of Statement~4 in \cref{lem:aAdmissChar}.
\end{itemize}
Note that $\eeSet^1_{n,a},\dotsc,\eeSet^5_{n,a}$ are disjoint sets.

\begin{example}\label{ex:eeSets}
For example, for $n=7$ and $a=3$,
we have that $\eeSet^1_{n,a}$ comprises $40$ permutations,
including $1235647$ and $4735621$.
Moreover, we have that
\begin{align*}
\eeSet^2_{n,a} &= \{7134526, 7261345\}, \\
\eeSet^3_{n,a} &= \{7134625, 7251346 \}, \\
\eeSet^4_{n,a} &= \{1345276, 2761345, 6134527, 6271345 \}, \\
\eeSet^4_{n,a} &= \{1347625, 2513476, 6134725, 6251347 \}. \\
\end{align*}
\end{example}
We shall see below that $|\eeSet^2_{n,a}|=|\eeSet^3_{n,a}|$ and
$|\eeSet^4_{n,a}|=|\eeSet^5_{n,a}|$ in general.

\begin{definition}\label{def:flip}
Suppose $\pi \in \symS_{n-1}$ belongs to some $\eeSet^j_{n,a}$,
and as always, let $k$ be the element succeeding $a$.
We then know that there is at least one run $s_m,\dotsc,e_m$ of $\pi$,
such that $s_m < k < e_m$, so choose $m$ which maximizes $s_m$.
Hence, the runs of $\pi$ are
\begin{equation}
\dotsc
\underset{\alpha}{ \shortBlock{}{a} \singleBlock{k}\shortBlock{}{l} }
 \dotsc  \;
\underset{\gamma}{ \runBlock{s_{m}}{}{e_{m}} }
 \dotsc  \;
\end{equation}
where $l>k$ is the largest entry in the run containing $a$.
As in the construction of $\insSwap_a$,
we write the $\gamma$-run as a concatenation of the elements less than $k$,
and elements greater than $k$:
\[
 \runBlock{s_{m}}{}{e_{m}} = \shortBlock{s_{m}}{}\shortBlock{}{e_{m}}.
\]
Now, let $\mu$ denote the contiguous sequence of runs (possibly empty)
appearing immediately after $l$ in $\pi$, starting with 
an element greater than $e_m$. 
Similarly, let $\nu$ be the contiguous sequence of runs (possibly empty)
appearing immediately after $e_m$ in $\pi$,
starting with an element greater than $l$.

Hence, the elements in $\alpha$ which are strictly greater than $k$,
together with the elements in $\mu$ form a contiguous segment of 
elements in $\pi$. Let us call this segment $\Sigma_1$.
Moreover, the elements in $\gamma$ greater than $k$,
together with the elements in $\nu$ also form a contiguous segment, $\Sigma_2$.
We then define \defin{$\insFlip_a(\pi)$} as the permutation obtained from $\pi$
by interchanging $\Sigma_1$ and $\Sigma_2$.
It is straightforward to verify that $\insFlip_a$ is an involution.

Finally, we note that the runs of $\insStay_a(\insFlip_a(\pi))$
are 
\begin{equation}\label{eq:runsOfStayAFlip}
\dotsc
 ,\; \shortBlock{}{an},  \;
 \dotsc,  \;\;
\shortBlock{s_{m}}{} \shortBlock{}{l}, \;\; \singleBlock{k} \shortBlock{}{ e_{m} }, \;
 \dotsc  \;.
\end{equation}
\end{definition}

\begin{example}
Consider a permutation $\pi = 83724561$, with $a = 2$.
Then $k = 4$, and we have runs $\alpha = 2456$ and $\gamma = 37$.
Moreover, $\mu_1=\nu_2=\emptyset$.
So, $\insFlip_2(\pi)$ is given by interchanging $\Sigma_1 = 56$ and $\Sigma_2 = 7$ in $\pi$,
which gives $\insFlip_2(\pi) = 83562471$.
\medskip 

Consider now instead $\pi = 134265$, with $a=1$.
Here, $k=3$, $\mu=\emptyset$ and $\nu=5$, $\Sigma_1 = 4$, $\Sigma_2 = 65$.
We have that $\insFlip_1(\pi) = 136524$.
\end{example}
Additional examples are shown in \cref{tab:flip}.

\begin{lemma}\label{lem:flip}
The map $\insFlip_a$, acts as follows:
\begin{align*}
\insFlip_a: &\eeSet^1_{n,a} \to \eeSet^1_{n,a}, \\
\insFlip_a: &\eeSet^2_{n,a} \to \eeSet^3_{n,a}, 
	&\insFlip^{-1}_a: \eeSet^3_{n,a} \to \eeSet^2_{n,a}, \\
\insFlip_a: &\eeSet^4_{n,a} \to \eeSet^5_{n,a}, 
	&\insFlip^{-1}_a: \eeSet^5_{n,a} \to \eeSet^4_{n,a}.
\end{align*}
Moreover, for each map $\psi \in \{\insFlip_a, \insFlip^{-1}_a \} $ above,
\[
\SPV(\pi) \cup \{n\} = \SPV(\insStay_a(\psi(\pi))),
\]
on the relevant domain, and $\psi$ preserves the set of descent bottoms.
\end{lemma}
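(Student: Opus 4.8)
The plan is to exploit the single structural feature that drives the entire construction: the map $\insFlip_a$ interchanges the two contiguous segments $\Sigma_1$ and $\Sigma_2$, and the net effect on the relevant run data is to \emph{swap the two maximal values} $l$ (the top of $\alpha_2$) and $e_m$ (the top of $\gamma_2$), while leaving $k$, the threshold start $s_{m+1}$, and the lex-order of all remaining runs intact. Everything then reduces to reading off how this swap permutes the five mutually exclusive patterns of Statement~4 in \cref{lem:aAdmissChar}, and to reading the resulting peak-values off \eqref{eq:runsOfStayAFlip}.

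First I would record that $\insFlip_a$ is a well-defined involution on $\bigcup_j \eeSet^j_{n,a}$: the run $\gamma$ with $s_m<k<e_m$ exists by Statement~4 of \cref{lem:aAdmissChar}, so $\gamma=\gamma_1\gamma_2$ splits into two non-empty pieces and the segments $\Sigma_1,\Sigma_2$ (and hence the flip) are defined, while the involution property is already observed in \cref{def:flip}. Because $\insFlip_a$ is an involution, $\insFlip_a^{-1}=\insFlip_a$, so the reverse assignments $\eeSet^3\to\eeSet^2$ and $\eeSet^5\to\eeSet^4$ follow automatically once the forward ones hold, and it suffices to treat $\eeSet^1\to\eeSet^1$, $\eeSet^2\to\eeSet^3$, and $\eeSet^4\to\eeSet^5$.

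Next, for the set-membership claim I would read the run data of $\pi'\coloneqq\insFlip_a(\pi)$ directly off \eqref{eq:runsOfStayAFlip} (after deleting $n$). The run of $\pi'$ containing $a$ is $\alpha_1\,a\,k\,\gamma_2$, so the value succeeding $a$ is still $k$ and the top of this run is now $e_m$; thus $k'=k$ and $l'=e_m$. The lex-largest run of $\pi'$ whose start lies below $k$ but whose top exceeds $k$ is $\gamma_1(\alpha_2\setminus k)$, whose top is $l$; thus $e'_{m'}=l$, while the threshold $s'_{m'+1}=s_{m+1}$ is untouched (the run-starts of $\pi'$ are a permutation of the old ones and $s_{m+1}$ survives). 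Substituting $l'=e_m$ and $e'_{m'}=l$ into the inequalities of Statement~4 then shows that the Case~2 inequality $k<l<e_m<s_{m+1}$ for $\pi$ rewrites as $k<e'_{m'}<l'<s'_{m'+1}$, the Case~3 inequality for $\pi'$; similarly the Case~4 inequality $k<l<s_{m+1}<e_m$ rewrites as $k<e'_{m'}<s'_{m'+1}<l'$, the Case~5 inequality for $\pi'$; and Case~1 (with no $s_{m+1}$) is self-paired. Along the way one checks that $a\in\slopeSet(\pi')$, that $\pi'$ is not $a$-slope-admissible, and that $\pi'$ is not in the swap-image, so that $\pi'$ genuinely lands in the asserted $\eeSet^{j'}_{n,a}$.

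Finally, for the peak identity I would read $\SPV(\insStay_a(\psi(\pi)))$ straight from \eqref{eq:runsOfStayAFlip}. The run $\alpha_1\,a\,n$ contributes the new peak-value $n$; the run $\gamma_1(\alpha_2\setminus k)$ ends in $l$ and is immediately followed in lex-order by the run $(k)(\dots e_m)$ starting with $k<l$, so $l$ is a peak-value, which restores the value that always lies in $\SPV(\pi)$; and the run $(k)(\dots e_m)$ ends in $e_m$, which is a peak-value precisely when its lex-successor starts below $e_m$, i.e.\ exactly when $e_m\in\SPV(\pi)$ (Case~4). Since every remaining run keeps both its top and its lex-successor, all other peak-values of $\pi$ are preserved, and we conclude $\SPV(\insStay_a(\psi(\pi)))=\SPV(\pi)\cup\{n\}$. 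I expect the middle step to be the main obstacle: one must verify that interchanging $\Sigma_1$ and $\Sigma_2$ really does yield clean runs with the claimed tops, with no spurious merge or fragmentation — this is precisely what the auxiliary trailing segments $\mu$ and $\nu$ are engineered to guarantee (in particular preventing $\gamma_1$ from merging with the run $\delta$ following it), and it must be confirmed case-by-case.
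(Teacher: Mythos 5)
Your proposal is correct and follows essentially the same route as the paper: both arguments reduce everything to the five cases of Statement~4 of \cref{lem:aAdmissChar} and read the resulting peak-values off \eqref{eq:runsOfStayAFlip}, treating the inverse maps for free via the involution property. Your explicit observation that the flip swaps the roles of $l$ and $e_m$ (turning the Case~2 inequality into the Case~3 inequality and Case~4 into Case~5) makes the domain/range bookkeeping more transparent than the paper's ``fairly straightforward to verify,'' but it is the same verification, and you correctly identify the one point both arguments must still check carefully, namely that exchanging $\Sigma_1$ and $\Sigma_2$ produces no spurious run merges.
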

\begin{proof}
It is fairly straightforward to verify that the domain and range match,
by examining the five cases in Statement~4 of \cref{lem:aAdmissChar}.
Moreover, we have that $\mu=\nu=\emptyset$ (as in the definition of $\insFlip_a$) unless 
we act on some permutation in $\eeSet^4_{n,a}$ or $\eeSet^5_{n,a}$.
\medskip

The case $\insFlip_a: \eeSet^1_{n,a} \to \eeSet^1_{n,a}$ follows the same structure 
as the other two cases, so we only consider those.

We first verify $\insFlip_a: \eeSet^2_{n,a} \to \eeSet^3_{n,a}$.
Let $\pi \in \eeSet^2_{n,a}$, where the runs are
\begin{equation}
\dotsc
\underset{\alpha_1}{\shortBlock{}{}}
 \singleBlock{a}
 \underset{\alpha_2}{\shortBlock{k}{l}} \;
 \dotsc  \;
 \runBlock{s_{m}}{}{e_{m}} 
 \runBlock{s_{m+1}}{}{e_{m+1}} 
 \dotsc 
\end{equation}
and we know that by the choice of $m$ and
properties of $\eeSet^2_{n,a}$, that $s_m<k<l<e_m<s_{m+1}$.
Hence, $l \in \SPV(\pi)$ and $e_m \notin \SPV(\pi)$.
Now as we saw in \eqref{eq:runsOfStayAFlip},
the runs of $\pi' \coloneqq \insStay_a(\insFlip_a(\pi))$ are
\begin{equation}
\dotsc
\shortBlock{}{an}  
\dotsc  \;\;
\shortBlock{s_{m}}{} \shortBlock{}{l} \;\; \singleBlock{k} \shortBlock{}{ e_{m} }
\runBlock{s_{m+1}}{}{e_{m+1}} 
\dotsc  \;.
\end{equation}
We therefore have that $n,l \in \SPV(\pi')$ but $e_m \notin \SPV(\pi')$.
All other peak-values are preserved,
and it is verified that $\SPV(\pi) \cup \{n\} = \SPV(\insStay_a(\insFlip_a(\pi)))$.

\medskip 
Now for the case $\insFlip_a: \eeSet^4_{n,a} \to \eeSet^5_{n,a}$.
For $\pi \in \eeSet^2_{n,a}$, the runs will satisfy
$k<l<s_m<e_m<s_{m+1}$ and the exact same reasoning \emph{for the runs of $\pi$}
is valid as in the previous case.
However, now it is possible that $\insFlip_a$ does 
something involing non-empty $\mu$ and $\nu$ in \cref{def:flip},
but by construction, this more complicated situation will 
still result in $\insStay_a(\insFlip_a(\pi))$ being of the form in
\eqref{eq:runsOfStayAFlip}.
One can now verify again that $\SPV(\pi) \cup \{n\} = \SPV(\insStay_a(\insFlip_a(\pi)))$.

Finally, it is straightforward to see that the set of descent bottoms are preserved.
\end{proof}

In \cref{tab:flip}, we illustrate the action of $\insFlip_1$ on some permutations
in $\eeSet^j_{7,1}$ for different choices of $j$.
\begin{table}
\begin{tabular}{lclc}
$\pi$ & Case & $\insFlip_1(\pi)$ & Case \\
\toprule
2351467 & 1 & 2367145 & 1 \\
2571346 & 1 & 2461357 & 1 \\
1473526 & 1 & 1453726 & 1 \\
2467135 & 1 & 2513467 & 1 \\
\midrule

7135246 & 2 & 7134625 & 3 \\
7134526 & 2 & 7136245 & 3 \\
7625134 & 2 & 7624135 & 3 \\
7362145 & 2 & 7352146 & 3 \\
\midrule
 
1342576 & 4 & 1357624 & 5 \\
6134275 & 4 & 6137524 & 5 \\
2476135 & 4 & 2513476 & 5 \\
7265134 & 4 & 7241365 & 5 \\
1342756 & 4 & 1375624 & 5 \\
2657134 & 4 & 2413657 & 5 \\
3762145 & 4 & 3521476 & 5 \\
2761345 & 4 & 2451376 & 5 \\
\bottomrule
\end{tabular}
\caption{Action of $\insFlip_1$ on various members of $\eeSet^j_{7,1}$,
for different choices of $j \in \{1,2,4\}$ (second column).
The last column tells us in which set $\eeSet^j_{7,1}$ where $\insFlip_1(\pi)$ belongs.
As we see, this agrees with the statements in \cref{lem:flip}.
}\label{tab:flip}
\end{table}

The following result is now an analog of \cref{lem:insertNforPeaks},
and is the main result in this section. The reader is encouraged to
review \cref{fig:fiveCasesBijFig} at this point.
\begin{theorem}\label{thm:insertNforPeaksLex}
For any $n\geq 1$, there is a bijection 
\[
  \peakBijLex : \{\emptyset,1,2,\dotsc,n-1 \} \times \symS_{n-1} \to  \symS_{n}
\]
which has the following properties.
For simplicity, we set $\pi' \coloneqq \peakBijLex(a,\pi)$ and 
we let $k$ be the value immediately
succeeding $a$ in $\runsort(\pi)$, unless $a$ is the last entry in $\runsort(\pi)$:
\begin{enumerate}
 \item $a=\emptyset$, and $\SPV(\pi') = \SPV(\pi)$.
 
 \item $a$ is the last entry of $\runsort(\pi)$, and $\SPV(\pi') = \SPV(\pi)$.
 
 \item \label{en:insertNafterPeakLex} $a \in \SPV(\pi)$. Then 
 \[
  \SPV(\pi') = (\SPV(\pi) \setminus \{a\}) \cup \{n\}.
 \]
 \item \label{en:insertNbeforePeakLex}  $k \in \SPV(\pi)$. Then 
 \[
  \SPV(\pi') = (\SPV(\pi) \setminus \{k\}) \cup \{n\}.
 \]
 
 \item $a$ is not the last entry of $\runsort(\pi)$,
 and neither $a$ or $k$ are in $\SPV(\pi)$. Then
 \[
  \SPV(\pi') = \SPV(\pi)  \cup \{n\}.
 \]
\end{enumerate}
\end{theorem}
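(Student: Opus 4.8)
The plan is to define $\peakBijLex$ piecewise, using the plain insertion $\insStay_a$ wherever it already produces the correct peak-value behaviour after run-sorting, and repairing it with the operations $\insSwap_a$ and $\insFlip_a$ exactly in the cases where $\insStay_a$ fails. For $a=\emptyset$ (Case 1) and for $a$ the last entry of $\runsort(\pi)$ (Case 2) I would set $\peakBijLex(a,\pi)=\insStay_a(\pi)$: in both situations $n$ is appended at the very end of $\runsort(\pi)$, so no peak-value is created or destroyed and $\SPV(\pi')=\SPV(\pi)$. For $a\in\SPV(\pi)$ (Case 3) I would again take $\insStay_a(\pi)$, since appending $n$ to the end of the run terminating at the peak $a$ turns $a$ into a non-peak and makes $n$ the new peak of that run, giving $\SPV(\pi')=(\SPV(\pi)\setminus\{a\})\cup\{n\}$, exactly mirroring item~(\ref{en:insertNafterPeak}) of \cref{lem:insertNforPeaks}.

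The two genuinely delicate cases are Cases 4 and 5, and these are where the preceding lemmas do all the work. For Case 4 ($k\in\SPV(\pi)$) I would split on $a$-peak-admissibility: when $\pi$ is $a$-peak-admissible I set $\peakBijLex(a,\pi)=\insStay_a(\pi)$ and invoke the defining identity \eqref{eq:peakAdmissible} via \cref{lem:kAdmissChar}; when $\pi$ is not $a$-peak-admissible I set $\peakBijLex(a,\pi)=\insStay_a(\insSwap_a(\pi))$ and invoke \cref{lem:swap}, whose identity \eqref{eq:peakNonAdmissible} yields precisely $(\SPV(\pi)\setminus\{k\})\cup\{n\}$. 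For Case 5 (neither $a$ nor $k$ in $\SPV(\pi)$, which I must first match to the condition $a\in\slopeSet(\pi)$) I would likewise split on $a$-slope-admissibility: the admissible subcase uses $\insStay_a$ directly through \eqref{eq:slopeAdmissible}, while the inadmissible subcase uses $\insStay_a(\insFlip_a(\pi))$, with \cref{lem:flip} supplying $\SPV(\pi)\cup\{n\}=\SPV(\insStay_a(\insFlip_a(\pi)))$. In every case the asserted identity on $\SPV$ is thereby reduced to a statement already proved.

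It then remains to prove that this piecewise map is a bijection. First I would check that the five cases, together with the admissible/inadmissible subdivisions, genuinely partition $\{\emptyset,1,\dots,n-1\}\times\symS_{n-1}$; the only real point is that Cases 3 and 4 are mutually exclusive, which holds because two peak-values of $\runsort(\pi)$ cannot be adjacent in $\runsort(\pi)$ (exactly as in the remark following \cref{lem:insertNforPeaks}), while $k$ is by construction adjacent to $a$ there. Since the domain and codomain both have cardinality $n!$, it suffices to prove injectivity, and for this it is enough that each piece is injective and that the images of the pieces are pairwise disjoint. Injectivity of each piece is immediate: $\insStay_a$ is inverted by reading off the entry preceding $n$ and deleting $n$; $\insSwap_a$ is injective by \cref{lem:swap}; and $\insFlip_a$ is an involution by \cref{def:flip}.

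The main obstacle is the disjointness of the images, equivalently the claim that from $\sigma\in\symS_n$ one can unambiguously decide which case produced it and then undo the correct correction. Given $\sigma$, the position of $n$ fixes $a$, and deleting $n$ gives $\rho\in\symS_{n-1}$; the difficulty is deciding whether $\rho$ equals $\pi$, or is $\insSwap_a(\pi)$, or is $\insFlip_a(\pi)$. This is precisely what the structural characterizations are built to enable: \cref{lem:kAdmissChar} detects the non-$a$-peak-admissible situation, Statement~3 of \cref{lem:aAdmissChar} guarantees that a permutation in the swap-image is never $a$-slope-admissible (so the Case~4 and Case~5 images cannot collide), and the pairings $|\eeSet^2_{n,a}|=|\eeSet^3_{n,a}|$ and $|\eeSet^4_{n,a}|=|\eeSet^5_{n,a}|$ from \cref{lem:flip} ensure that $\insFlip_a$ permutes the inadmissible Case~5 permutations among themselves with no overlap. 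Assembling these local facts into a single global classification of $\sigma$, and confirming that the recovered pair $(a,\pi)$ lands in the expected case, is the crux; once it is in place, injectivity together with the cardinality count $n!=n!$ completes the proof.
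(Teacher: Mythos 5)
Your overall architecture matches the paper's: Cases 1--3 via plain $\insStay_a$, Case 4 split by $a$-peak-admissibility with $\insSwap_a$ as the repair, and Case 5 split by $a$-slope-admissibility. The gap is in Case 5. You treat it as a two-way split (slope-admissible versus not, with $\insFlip_a$ handling the whole inadmissible part), but the non-$a$-slope-admissible permutations split further into two disjoint classes: those in the swap-image, i.e.\ of the form $\insSwap_a(\sigma)$ for some non-$a$-peak-admissible $\sigma$, and those in $\eeSet^1_{n,a}\cup\dotsb\cup\eeSet^5_{n,a}$. The map $\insFlip_a$ is only defined, and \cref{lem:flip} only proved, on the latter class; the sets $\eeSet^j_{n,a}$ are by construction disjoint from the swap-image. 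So your map is undefined on pairs $(a,\rho)$ with $\rho$ in the swap-image, and no extension by $\insFlip_a$ is available there.

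This swap-image subcase is not a technicality but the exact counterpart of your Case 4 repair. When $\pi$ is not $a$-peak-admissible you (correctly) send $(a,\pi)$ to $\insStay_a(\insSwap_a(\pi))$; this leaves the permutation $\insStay_a(\pi)$ itself without a preimage. The paper assigns it to the pair $(a,\rho)$ with $\rho=\insSwap_a(\pi)$: in Case 5 one sets $\pi'=\insStay_a(\insSwap_a^{-1}(\rho))$ for $\rho$ in the swap-image, and then checks via \eqref{eq:non-a-peak-case} and the run decomposition \eqref{eq:insSwapDef} that $\SPV(\insStay_a(\insSwap_a^{-1}(\rho)))=\SPV(\rho)\cup\{n\}$, as Case 5 requires. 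Statement 3 of \cref{lem:aAdmissChar} is what makes this legal: swap-image permutations are never $a$-slope-admissible, so this third subcase cannot overlap with the subcase handled by plain $\insStay_a$. You cite that statement, but only to argue that images do not collide; its actual role is to carve out a separate subcase needing a different formula. The rest of your proposal --- reducing the $\SPV$ identities to \cref{lem:kAdmissChar,lem:swap,lem:flip}, recovering $a$ from the position of $n$, and the cardinality argument --- is in line with the paper's proof.
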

\begin{proof}
We shall now describe how $\pi' \coloneqq \peakBijLex(a,\pi)$ is computed
in each of the cases listed above.

\medskip 
\noindent
\textbf{Case 1:} $a=\emptyset$ and $\pi' = \insStay_a(\pi)$.
It is straightforward to see that $\SPV(\pi') = \SPV(\pi)$.

\medskip 
\noindent
\textbf{Case 2:} $a$ is the last entry of $\runsort(\pi)$ and $\pi' = \insStay_a(\pi)$.
Again, it is easy to verify that $\SPV(\pi') = \SPV(\pi)$.

\bigskip 

In all remaining cases, $a$ appears in some run $\alpha$.
Let $k$ be the entry immediately after $a$
in $\runsort(\pi)$. We note that at most one of $a$ and $k$ can be a member of $\SPV(\pi)$.
In the case when $\alpha$ is not the last run,
let $\beta$ be the run immediately succeeding $\alpha$.

\medskip 
\noindent
\textbf{Case 3:} $a \in \SPV(\pi)$, and we know that $a>k$.
We have that
\begin{equation}\label{eq:lexsortPiRuns3}
 \dotsc
 \underset{\alpha}{ \runBlock{}{}{a} } \;
 \underset{\beta}{ \runBlock{k}{}{} } \;
 \dotsc  \;
 \runBlock{s_r}{}{e_r},
\end{equation}
and we set $\pi' = \insStay_a(\pi)$.
The runs of $\pi$ can easily be identified with the runs of $\insStay_a(\pi)$,
and hence
\[
\SPV(\pi') = (\SPV(\pi) \setminus \{a\}) \cup \{n\}.
\]

\medskip 
\noindent
\textbf{Case 4:} $k\in \SPV(\pi)$, so $\alpha$ is for sure not the lex-largest run of $\pi$.
We are therefore in the situation
\begin{equation}\label{eq:lexsortPiRuns4}
 \dotsc
 \underset{\alpha}{ \runBlock{}{}{ak} } \;
  \underset{\beta}{ \runBlock{b}{}{} } \;
 \dotsc  \;
 \runBlock{s_r}{}{e_r}
\end{equation}
where $k>b$. If $\pi$ is $a$-peak-admissible, we let $\pi' = \insStay_a(\pi)$,
otherwise we set $\pi' = \insStay_a(\insSwap_a(\pi))$.
By definition of $a$-peak-admissible or by \cref{lem:swap}, we have that
\[
  \SPV(\pi') = (\SPV(\pi) \setminus \{k\}) \cup \{n\}.
\]

\medskip 
\noindent
\textbf{Case 5:}  $a \notin P$ and $k \notin P$.

\begin{enumerate}[label={(\alph*)}]
 \item If $\pi$ is $a$-slope-admissible, we set $\pi' = \insStay_a(\pi)$.
 Note that by \cref{lem:aAdmissChar}, $\pi'$
 will not be equal to some $\insStay_a(\insSwap_a(\sigma))$,
 from Case~4.
 
 \item If $\pi$ is in the swap-image, so that 
 $\pi$ is of the form $\insSwap_a(\sigma)$ for some (unique) $\sigma$, 
we let
\[
 \pi'  = \insStay_a(\sigma) = \insStay_a(\insSwap^{-1}_a(\pi)).
\]
To elaborate more on this choice, 
suppose that the runs of $\insStay_a(\pi)$ are of the form
of some $\insStay_a(\insSwap_a(\sigma))$ described earlier in \eqref{eq:insSwapDef}:
\begin{equation}\label{eq:insSwapUse}
\dotsc
 \underset{\alpha}{ \runBlock{}{}{an} } \;
 \singleBlock{k}
 \underset{\gamma_2}{\shortBlock{}{e_m}} \;
 \underset{\beta}{ \runBlock{b}{}{} } \;
 \dotsc  \;
 \underset{\gamma_1}{ \shortBlock{s_m}{} }
 \runBlock{s_{m+1}}{}{e_{m+1}}
 \dotsc  \;
\end{equation}
This means that $\runsort(\pi)$ is of the form
\begin{equation*}
\dotsc
 \underset{\alpha}{ \runBlock{}{}{ak} \shortBlock{}{e_m}} \;
 \underset{\beta}{ \runBlock{b}{}{} } \;
 \dotsc  \;
 \underset{\gamma_1}{ \shortBlock{s_m}{} }
 \runBlock{s_{m+1}}{}{e_{m+1}}
 \dotsc  \;
\end{equation*}
where $a$, $k$ and $e_m$  are in the same run, and $e_m \in \SPV(\pi)$.
We can compute $\sigma = \insSwap^{-1}_a(\pi)$
and the runs of $\sigma$ are
\begin{equation}
\dotsc
 \underset{\alpha}{ \runBlock{}{}{ak} } \;
 \underset{\beta}{ \runBlock{b}{}{} } \;
 \dotsc  \;
 \underset{\gamma_1}{\shortBlock{s_m}{}}
 \underset{\gamma_2}{\shortBlock{}{e_m}} \;
 \runBlock{s_{m+1}}{}{e_{m+1}}
 \dotsc 
\end{equation}
Finally, the runs of $\insStay_a(\sigma)$ are lexicographically
sorted as
\begin{equation}
\dotsc
 \underset{\alpha}{ \runBlock{}{}{an} } \;
 \underset{\beta}{ \runBlock{b}{}{} } \;
 \dotsc  \;
 \runBlock{s_{m}}{}{e_{m}} \;
 \singleBlock{k}
 \runBlock{s_{m+1}}{}{e_{m+1}}
 \dotsc  \;,
\end{equation}
and it is now evident that $\SPV(\pi') = \SPV(\pi) \cup \{n\}$,
as desired.

\item The only remaining case is 
when $\pi \in \eeSet^1_{n,a} \cup \dotsb \cup \eeSet^5_{n,a}$.
We then let $\pi' = \insFlip_a(\pi)$, and by \cref{lem:flip},
we again have that $\SPV(\pi') = \SPV(\pi) \cup \{n\}$.
\end{enumerate}

In each case, we can easily determine $a$ from $\pi'$, as this is the entry to the left of $n$.
All the non-trivial maps used, i.e.,~$\insFlip_a$ and $\insSwap_a$, are invertible,
so it follows that the description above indeed defines a bijection.
\end{proof}

The bijections $\peakBij$ and $\peakBijLex$ behave in
a similar manner on other combinatorial statistics.
Now we do need the distinction between Case~5a and Case~5b 
(separating these cases is compatible with $\insFlip_a$,
since by \cref{lem:flip}, $\insFlip_a$ preserves the set of descent bottoms).

In what follows, by convention, if $a=\emptyset$, set $k \coloneqq \pi_1$.
We refer to the introduction where all permutation statistics are defined.
\begin{lemma}\label{lem:otherStatistics}
Let $\sigma \in \symS_{n-1}$ and let $\sigma' \coloneqq \peakBij(\sigma,a)$
for $a \in \{\emptyset,1,2,\dotsc,n-1\}$ (and we let $k$ be the element succeeding $a$ in $\sigma$).
Then for each of the cases in \cref{lem:insertNforPeaks}, we have the following
recursive relations:
\begin{center}
\begin{tabular}{lllll}
Case & $\sigma'_1$ & $\DB(\sigma')$ & $\LTRMIN(\sigma')$ & $\PKacb(\sigma')$ \\
\toprule 
1 & $n$ & $\DB(\sigma) \cup \{k\}$ & $\LTRMIN(\sigma) \cup \{n\}$ & $\PKacb(\sigma)$  \\
2 & $\sigma_1$ & $\DB(\sigma)$ & $\LTRMIN(\sigma)$ & $\PKacb(\sigma)$  \\
3 & $\sigma_1$ & $\DB(\sigma)$ & $\LTRMIN(\sigma)$ & $\PKacb(\sigma)$  \\
4 & $\sigma_1$ & $\DB(\sigma) \cup \{k\}$ & $\LTRMIN(\sigma)$ & $\PKacb(\sigma)$  \\
5a & $\sigma_1$ & $\DB(\sigma) \cup \{k\}$ & $\LTRMIN(\sigma)$ & 
$(\PKacb(\sigma) \setminus \{a\}) \cup \{n\}$ \\
5b & $\sigma_1$ & $\DB(\sigma)$ & $\LTRMIN(\sigma)$ &
$\PKacb(\sigma) \setminus \{a\}$ \\
\bottomrule
\end{tabular}
\end{center}
The same identities hold in the corresponding
cases if we instead take $\sigma' \coloneqq \peakBijLex(\sigma,a)$.
\end{lemma}
\begin{proof}[Proof sketch]

We need to examine both $\peakBij(\cdot,a)$ and $\peakBijLex(\cdot,a)$,
and make sure that they affect the statistic in the same
manner for each of the listed cases above.

\emph{Observation:}
Note that $k$ is always the number succeeding $n$ in $\sigma'$,
for both $\peakBij(\cdot,a)$ and $\peakBijLex(\cdot,a)$.
To show this, one needs to go through all cases in \cref{thm:insertNforPeaksLex},
but it suffices to verify that $\insSwap_a(\sigma)$ and $\insFlip_a(\sigma)$
always have $k$ succeeding $a$.

\medskip 
\noindent
\textbf{First entry statistic.} It is clear that 
if we insert $n$ after any entry $a$, the very first entry of the permutation is preserved.
This is the case also for the more complicated map $\peakBijLex(\cdot,a)$.
Likewise, inserting $n$ in the very beginning makes it the first entry.

\medskip 
\noindent
\textbf{Descent bottom statistic.}
It is straightforward to verify the effect on the 
set of descent-bottoms when we simply apply $\insStay_a(\sigma)$.
So then it remains to verify the same five cases for $\peakBijLex(\sigma,a)$,
but this follows more or less directly from the observation above.

Now, together with \cref{lem:casesCount},
and the fact that the behavior of descent bottoms under $\peakBijLex(\cdot,a)$ 
agrees with $\peakBij(\cdot,a)$, implies that the number of values of $a$ belonging to Case~5a,
is the same for both maps.

\medskip 
\noindent
\textbf{Left to right minima statistic.}
For $\peakBij(\cdot,a)$, we note that inserting $n$ somewhere 
in $\sigma$ does not affect the left to right minima, except when $n$
is inserted in the very beginning. 

Now for $\peakBijLex(\cdot,a)$, note that 
the maps $\insSwap_a(\sigma)$ only swap postfix segments of runs.
Examining $\insFlip_a(\sigma)$ is slightly more intricate,
but it is not difficult to verify that the $\Sigma_1$ and $\Sigma_2$
which are swapped never contain any left to right minima values. 
Since the Swap map $\insSwap_a$ is an involution, this gives the desired property.

\medskip 
\noindent
\textbf{$132$-peak-values.} This is also treated via case-by-case analysis.

\end{proof}

\subsection{A bijection on permutations}\label{ssec:conclusion}

The recursions in \cref{lem:insertNforPeaks} and \cref{thm:insertNforPeaksLex}
have the same structure, and thus allows us to construct an implicit bijection,
\[
  \eta : \symS_n \to \symS_n,
\]
with the main property that $\PEAKVAL(\sigma) = \PEAKVAL(\runsort(\eta(\sigma))$.
The construction is recursive, and it is not canonical.
For example, we shall impose the additional constraint that $\eta$
also preserves the set of descent bottoms, which is possible due to 
\cref{lem:otherStatistics}.

We illustrate this idea with a concrete example.

Suppose $\eta(\sigma) = \sigma'$ for $\sigma\in\symS_{n-1}$,
where we know $\PEAKVAL(\sigma) = \PEAKVAL(\runsort(\eta(\sigma))$,
and $\DB(\sigma) = \DB(\eta(\sigma))$,
For example, with $\sigma=641325$ and $\sigma'=645132$,
suppose we have already determined that
\[
\eta({6, 4, 1, 3, 2, 5}) = ({6, 4, 5, 1, 3, 2}).
\]
By induction, we know that $\sigma$ and $\sigma'$ have the same number of values of $a$
belonging to each respective case of \cref{lem:insertNforPeaks} and \cref{thm:insertNforPeaksLex},
and they have the same set of descent bottoms, ($\{1,2,4\}$ in this case).
In our example, we have the following situation:
\begin{center}
\begin{tabular}{ccc}
Case  & $a$ for $\sigma$ & $a$ for $\sigma'$ \\
\toprule
1			& $\emptyset$ & $\emptyset$ \\ 
2			& $5$ & $6$ \\
3			& $3$ & $3$ \\
4			& $1$ & $1$ \\
5a		& $2$ & $4$ \\
5b		& $4,6$ & $2,5$
\end{tabular}
\end{center}
For example, $a=3$ belongs to the second case for both $\sigma$ and $\sigma'$.
From the first five rows, we do not have any choice, and we set 
\begin{align*}
  \eta( \peakBij(\sigma,\emptyset) ) &\coloneqq \peakBijLex(\sigma',\emptyset) \\
  \eta( \peakBij(\sigma,5) ) &\coloneqq \peakBijLex(\sigma',6) \\
  \eta( \peakBij(\sigma,3) ) &\coloneqq \peakBijLex(\sigma',3) \\
  \eta( \peakBij(\sigma,1) ) &\coloneqq \peakBijLex(\sigma',1) \\
  \eta( \peakBij(\sigma,2) ) &\coloneqq \peakBijLex(\sigma',2)
\end{align*}
However, in the last row, any combination would ensure that 
$\eta$ behaves as desired on peaks and preserves the set of descent bottoms. 
For example, we can take
\begin{align*}
  \eta( \peakBij(\sigma,2) ) &\coloneqq \peakBijLex(\sigma',4) \\
  \eta( \peakBij(\sigma,4) ) &\coloneqq \peakBijLex(\sigma',2) \\
  \eta( \peakBij(\sigma,6) ) &\coloneqq \peakBijLex(\sigma',6).
\end{align*}
In conclusion, we have 
\begin{align*}
 \eta(7, 6, 4, 1, 3, 2, 5) &= ( {7, 6, 4, 5, 1, 3, 2} ) \\
 \eta(6, 4, 1, 3, 2, 5, 7) &= ( {6, 7, 4, 5, 1, 3, 2} ) \\
 \eta(6, 4, 1, 3, 7, 2, 5) &= ( {6, 4, 5, 1, 3, 7, 2} ) \\
 \eta(6, 4, 1, 7, 3, 2, 5) &= ( {6, 4, 5, 1, 7, 3, 2} ) \\
 \eta(6, 4, 1, 3, 2, 7, 5) &= ( {6, 4, 7, 5, 1, 3, 2} ) \\
 \eta(6, 4, 7, 1, 3, 2, 5) &= ( {6, 4, 5, 1, 3, 2, 7} ) \\
 \eta(6, 7, 4, 1, 3, 2, 5) &= ( {6, 7, 4, 5, 1, 3, 2} ),
\end{align*}
and thus, we have recursively extended the domain of $\eta$.
We encourage the reader to verify that peak-values in the left hand side,
are mapped to peak-values after runsort in the right hand side.

In general, there is no choice in Case~1 and Case~2, as there is always exactly one 
possible value of $a$ there. In Case~3 and Case~4, we have many possible combinations
if we are only interested in preserving the \emph{number} of peaks,
but the choice is unique if we want to map $\PEAKVAL$ to $\SPV$. In Case~3 for example, 
if $n$ is inserted after a peak-value $k$, then this value $k$ must be the same on both sides
(but correspond to different values of $a$).
But for Case~5a and Case~5b, there is some freedom, even if we demand 
that the set of descent bottoms is preserved.
Incorporating additional statistics from \cref{lem:otherStatistics}
will not reduce this freedom further. Rather, it is necessary to
find some combinatorial statistic which split Case~5a and Case~5b further,
or make the pairing canonical.

\begin{example}
Here are the values of $\eta$, constructed with the above method.
It turns out that this construction is canonical for the values presented below,
but in general, choices have to be made.
\begin{center}
\begin{tabular}{cc}
$\sigma$&$\eta(\sigma)$ \\
\toprule
12 & 12 \\
21 & 21 \\
\midrule 
123 & 123 \\
132 & 132 \\
213 & 231 \\
231 & 213 \\
312 & 312 \\
321 & 321 \\
\end{tabular}
\quad
\begin{tabular}{cc}
$\sigma$&$\eta(\sigma)$ \\
\toprule
 1234 & 1234 \\
 1243 & 1243 \\
 1324 & 1324 \\
 1342 & 1342 \\
 1423 & 1423 \\
 1432 & 1432 \\
 2134 & 2341 \\
 2143 & 2431 \\
 \end{tabular}
\quad
\begin{tabular}{cc}
$\sigma$&$\eta(\sigma)$ \\
\toprule
 2314 & 2413 \\
 2341 & 2134 \\
 2413 & 2314 \\
 2431 & 2143 \\
 3124 & 3412 \\
 3142 & 3142 \\
 3214 & 3421 \\
 3241 & 3214 \\
\end{tabular}
\quad
\begin{tabular}{cc}
$\sigma$&$\eta(\sigma)$ \\
\toprule
 3412 & 3124 \\
 3421 & 3241 \\
 4123 & 4123 \\
 4132 & 4132 \\
 4213 & 4231 \\
 4231 & 4213 \\
 4312 & 4312 \\
 4321 & 4321 \\
\end{tabular}
\end{center}
\end{example}

The discussion above (relying on \cref{lem:otherStatistics})
can be formulated as the following theorem which is the main result in this section.
\begin{theorem}\label{thm:mainPeakIdentity}
For $n \geq 1$, we have that 
\begin{equation*}
 \sum_{\pi \in \symS_n}
 \xvec_{\DB(\pi)}
 \yvec_{\LTRMIN(\pi)}
 \zvec_{\PKacb(\pi)}
 \wvec_{\PEAKVAL(\pi)} 
=
 \sum_{\pi \in \symS_n} 
 \xvec_{\DB(\pi)}
 \yvec_{\LTRMIN(\pi)}
 \zvec_{\PKacb(\pi)}
 \wvec_{\SPV(\pi)}.
\end{equation*}
\end{theorem}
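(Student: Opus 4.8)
The plan is to prove the stronger statement that there exists a bijection $\eta\colon \symS_n \to \symS_n$ with $\RS(\eta(\sigma)) = \RS(\sigma)$ and $\SPV(\eta(\sigma)) = \PEAKVAL(\sigma)$ for every $\sigma \in \symS_n$; the asserted identity then follows at once by reindexing the right-hand sum through $\tau = \eta(\sigma)$. I would build $\eta$ by induction on $n$, the base case $n=1$ being trivial. The engine of the induction is the pair of parallel insertion bijections already in hand: $\peakBij$ from \cref{lem:insertNforPeaks}, which realizes $\symS_n$ as $\{\emptyset,1,\dots,n-1\}\times\symS_{n-1}$ while controlling $\PEAKVAL$, and $\peakBijLex$ from \cref{thm:insertNforPeaksLex}, which does the same while controlling $\SPV$. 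These have identical five-case descriptions, and \cref{lem:runstarts} records that they have identical effects on $\RS$ in each of the five cases. Assuming $\eta$ is defined on $\symS_{n-1}$ with $\RS(\sigma)=\RS(\eta(\sigma))$ and $\PEAKVAL(\sigma)=\SPV(\eta(\sigma))$, I write each $\tau\in\symS_n$ uniquely as $\tau=\peakBij(a,\sigma)$ and set $\eta(\tau)\coloneqq \peakBijLex(a',\eta(\sigma))$, where $a'$ is chosen in the same case as $a$.

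The matching of insertion positions is forced in the first four cases. In Cases 1 and 2 there is exactly one eligible position on each side. In Case 3 the position $a$ is itself the peak-value being replaced by $n$, and since $\PEAKVAL(\sigma)=\SPV(\eta(\sigma))$ I match the positions carrying equal peak-value; here $\RS$ is untouched on both sides. In Case 4 the relevant datum is the peak-value $k$ removed, equivalently the new run-start $k$ (which lies outside $\RS$ because $k$ has an ascending predecessor, so it is genuinely created); as $k$ again ranges over the common peak-value set, matching by equal $k$ simultaneously preserves $\SPV$ and $\RS$.

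The sole genuine freedom, and the main obstacle, is Case 5, where $n$ becomes a fresh peak and I must align the positions so that the $\RS$-change agrees — this is precisely the extra constraint (beyond peaks) imposed by the refined statement. By \cref{lem:runstarts}, Case 5 splits according to whether $a<k$, in which case the successor $k$ becomes a new run-start, or $a>k$, in which case $\RS$ is unchanged. The crux is that both sub-collections are determined purely by $(\RS,\PEAKVAL)$ on the $\peakBij$-side and by $(\RS,\SPV)$ on the $\peakBijLex$-side, so the induction hypothesis forces them to coincide. I would argue that the successors $k$ occurring in the $a<k$ sub-case are exactly the entries in $[n-1]\setminus(\RS(\sigma)\cup\PEAKVAL(\sigma))$: the inequality $a<k$ automatically makes $a$ a non-peak and forces $k\notin\RS(\sigma)$, while ``$k$ not a peak'' is the single remaining requirement, and conversely every such ``neither'' value has an ascending predecessor and yields a legitimate Case 5 insertion. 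The same description holds verbatim on the $\peakBijLex$-side, with $\PEAKVAL$ replaced by $\SPV$ and with peaks and run-starts read off $\runsort(\eta(\sigma))$, whose run-start set is exactly $\RS(\eta(\sigma))$. Since $\RS(\sigma)\cup\PEAKVAL(\sigma)=\RS(\eta(\sigma))\cup\SPV(\eta(\sigma))$ by the inductive hypothesis, these sets of created run-starts agree, so I match by equal $k$. The remaining $a>k$ positions create no run-start, and a global count — one position each for $a=\emptyset$ and the last entry, together with two families of $p\coloneqq|\PEAKVAL(\sigma)|$ positions for Cases 3 and 4 — shows their number equals $|\RS|-p-1$ on each side, so they may be paired arbitrarily. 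The inserted $n$ (hence $a$) is recoverable and $\peakBij,\peakBijLex$ are invertible, so $\eta$ is a well-defined bijection preserving $\RS$ and sending $\PEAKVAL$ to $\SPV$, which completes the induction and proves the identity; the worked example preceding the statement is exactly this matching in action.
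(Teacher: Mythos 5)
Your proposal is the paper's own argument: the same recursive construction of $\eta$ from $\peakBij$, $\peakBijLex$ and \cref{lem:runstarts}, with Cases 1--4 forced and Case 5 matched by the $a<k$ versus $a>k$ subcases; your identification of the $a<k$ positions with $[n-1]\setminus(\RS(\sigma)\cup\PEAKVAL(\sigma))$ on one side and $[n-1]\setminus(\RS(\eta(\sigma))\cup\SPV(\eta(\sigma)))$ on the other even supplies the justification for the Case-5 matching that the paper only illustrates by example. One parenthetical claim is false --- $\RS(\runsort(\tau))$ can be a proper subset of $\RS(\tau)$ when adjacent runs merge under run-sorting (e.g.\ $231\mapsto 123$) --- but your conclusion is unaffected, since in the $a<k$ subcase the relevant successor $k$ of $a$ lies in the same run of $\eta(\sigma)$ as $a$ (so it is the same whether read off $\eta(\sigma)$ or $\runsort(\eta(\sigma))$) and therefore still avoids $\RS(\eta(\sigma))\cup\SPV(\eta(\sigma))$.
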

\begin{proof}
Apply any of the bijections $\eta$, as constructed above,
which preserves $\DB$, $\LTRMIN$ and $\PKacb$, and sends $\PEAKVAL$ to $\SPV$.
\end{proof}

We end this section with the following proposition,
giving a new interpretation of \oeis{A202365}.
\begin{proposition}
The numbers $B_{n}$ of permutations $\sigma$ such that $\runsort(\sigma)$ 
has a descent at position $2$ satisfies the recurrence relation 
\begin{equation*}
B_{n} = (n-1)! + (n-2)B_{n-1},
\end{equation*} with initial condition $B_{3} = 2$.
Consequently, $B_{n} = (n-2)!(n+1)\dfrac{n-2}{2}$ for $n \geq 3$.
These also correspond to the counting sequence \oeis{A202365}
whose first few values are 
\[
2, 10, 54, 336, 2400, 19440, 176400, \dotsc.
\]
\end{proposition}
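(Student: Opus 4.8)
The plan is to first translate the defining condition on $\runsort(\sigma)$ into a purely local condition on $\sigma$, then set up a deletion/insertion recursion on the largest letter, and finally solve the resulting recurrence. Recall that every element of $\RSP(n)$ begins with $1$, since $1$ always starts its own run and the run containing $1$ is lexicographically smallest; thus the first run of $\tau \coloneqq \runsort(\sigma)$ is exactly the run of $\sigma$ containing $1$, and the run-starts of $\tau$ are strictly increasing, $1 = s_1 < s_2 < \dots < s_k$. I would observe that $\tau$ has a descent at position $2$ precisely when this first run has length exactly $2$, say equal to $\{1,m\}$, and moreover $m > s_2$. The key simplification is that $m > s_2$ is automatic once $m \geq 3$: the letter $2$ then lies in a run other than $\{1,m\}$, whose start is at most $2$, so $s_2 \leq 2 < m$; conversely, if $m = 2$ then every other run-start is at least $3 > m$, giving no descent. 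Hence
\[
 B_n = \#\{\, \sigma \in \symS_n : \text{the run of } \sigma \text{ containing } 1 \text{ equals } \{1,m\} \text{ with } m \geq 3 \,\}.
\]

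For the recurrence, I would split this count according to whether $m = n$ or $m < n$. When $m = n$, the condition reduces to ``$1$ is immediately followed by $n$'' (the forced descent after the maximal letter automatically closes the run); treating $1n$ as a single block and freely arranging it with $2,\dots,n-1$ yields $(n-1)!$ such permutations. When $m < n$, I would delete $n$ to obtain $\sigma' \in \symS_{n-1}$ and argue $\sigma'$ again lies in the set counted by $B_{n-1}$: since $n$ is distinct from $1$ and $m$, and cannot sit immediately after $m$ (the letter after $m$ is smaller than $m < n$), deleting $n$ leaves the block $1\,m$ and the descent following it intact. Conversely, inserting $n$ into $\sigma'$ keeps membership in the $m<n$ family exactly when $n$ avoids the two forbidden gaps: placing it immediately after $1$ would make the run of $1$ equal $\{1,n\}$ (landing in the $m=n$ family), and placing it immediately after $m$ would extend the run to length $3$. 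All other $n-2$ gaps are admissible, so deletion and restricted insertion are mutually inverse, giving an $(n-2)$-to-one correspondence. This yields $B_n = (n-1)! + (n-2)B_{n-1}$, and the base value $B_3 = 2$ is checked directly (only $132$ and $213$ qualify).

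To extract the closed form I would normalize by setting $C_n \coloneqq B_n/(n-2)!$. Dividing the recurrence by $(n-2)!$ collapses the second term, since $(n-2)\,B_{n-1}/(n-2)! = B_{n-1}/(n-3)! = C_{n-1}$, giving $C_n = C_{n-1} + (n-1)$ with $C_3 = 2$. Telescoping then gives
\[
 C_n = 2 + \sum_{j=4}^{n}(j-1) = \frac{n(n-1)}{2} - 1 = \frac{(n-2)(n+1)}{2},
\]
whence $B_n = (n-2)!\,\tfrac{(n+1)(n-2)}{2}$, matching the listed values $2,10,54,336,\dots$ of \oeis{A202365}.

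The main obstacle is the translation step: one must carefully justify that the global comparison $m > s_2$ collapses to the local condition $m \geq 3$, which hinges on locating the letter $2$ relative to the run of $1$. A secondary point requiring care is the bijection, where one must confirm that exactly the two gaps adjacent to the block $1m$ are forbidden and that deletion and insertion are genuinely inverse; once these structural facts are secured, the normalization and telescoping are routine.
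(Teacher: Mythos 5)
Your proof is correct and follows essentially the same route as the paper's: both split according to whether the entry in position $2$ of $\runsort(\sigma)$ equals $n$ (contributing $(n-1)!$ via inserting $n$ immediately after $1$ in an arbitrary element of $\symS_{n-1}$) or is some $m<n$ (contributing $(n-2)B_{n-1}$ via deleting and reinserting $n$ while avoiding the two gaps adjacent to the block $1m$), and then solve the resulting recurrence. Your write-up is in fact more careful than the paper's, which leaves the characterization ``the run of $\sigma$ containing $1$ equals $\{1,m\}$ with $m\geq 3$'' implicit; the only small imprecision is your claim that the first run of $\runsort(\sigma)$ is exactly the run of $\sigma$ containing $1$, which can fail when adjacent runs merge after sorting (e.g.\ $\sigma=231$), but no case you actually invoke is affected by this.
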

\begin{proof}
Consider $\sigma \in \symS_{n-1}$.
Inserting $n$ immediately after $1$ in $\sigma$
and then run-sorting, gives a permutation $\runsort(\sigma) \in \symS_n$,
whose first run is of the form $1n$.
Hence, we obtain $(n-1)!$ permutations having a descent at position $2$.

We must now count the number of permutations $\sigma$,
where we have a descent at position $2$, and $\sigma(2) \in \{3,4,\dotsc,n-1\}$. 
So now suppose $\sigma \in B_{n-1}$ which already has a descent at position $2$,
and $\sigma(2)<n$. We can then insert $n$ anywhere, except after $1$ or 
after $\sigma(2)$. The resulting permutation $\sigma' \in \symS_n$
will then also have the property that $\runsort(\sigma')$
lies in $B_{n}$.
This method gives $(n-2)B_{n-1}$ additional permutations,
and its straightforward to see that all elements in $B_n$ can be produced
by exactly one of the two methods above.
\end{proof}

\subsection{Probabilistic statements}

It is a standard exercise to compute the expected number of descents in a permutation.
\begin{proposition}
The expected number of descents in a permutation in $\symS_n$ is $(n-1)/2$.
\end{proposition}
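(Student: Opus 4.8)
The plan is to use linearity of expectation together with a simple symmetry argument, which is the standard route for statistics of this form. First I would write $\des$ as a sum of indicator random variables over the possible descent positions. Concretely, for each $k \in [n-1]$, let $X_k$ be the indicator of the event that $k$ is a descent of a uniformly random $\sigma \in \symS_n$, i.e.\ $X_k = 1$ if $\sigma(k) > \sigma(k+1)$ and $X_k = 0$ otherwise. Then $\des(\sigma) = \sum_{k=1}^{n-1} X_k$, and linearity of expectation gives
\[
\ee[\des(\sigma)] = \sum_{k=1}^{n-1} \ee[X_k] = \sum_{k=1}^{n-1} \prob[\sigma(k) > \sigma(k+1)].
\]

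Next I would compute each probability $\prob[\sigma(k) > \sigma(k+1)]$. The key observation is a symmetry: under a uniformly random permutation, the ordered pair of values $(\sigma(k), \sigma(k+1))$ in two fixed adjacent positions is equally likely to be increasing or decreasing. One clean way to see this is to pair each permutation with the one obtained by swapping the entries in positions $k$ and $k+1$; this is a fixed-point-free involution on $\symS_n$ that toggles whether $k$ is a descent, so exactly half of all permutations have a descent at position $k$. Hence $\prob[\sigma(k) > \sigma(k+1)] = 1/2$ for every $k$.

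Finally, substituting back yields $\ee[\des(\sigma)] = \sum_{k=1}^{n-1} \tfrac{1}{2} = \tfrac{n-1}{2}$, as claimed. There is no serious obstacle here, since the result is a classical exercise; the only point deserving a line of care is the justification that $\prob[\sigma(k) > \sigma(k+1)] = 1/2$, which the swapping involution settles cleanly and avoids any appeal to independence between different positions (which in fact does not hold, but is irrelevant because linearity of expectation requires none).
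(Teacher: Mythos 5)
Your proof is correct and follows essentially the same route as the paper's: indicator variables for each descent position, linearity of expectation, and the swap-at-adjacent-positions involution to show each indicator has expectation $1/2$. The only difference is that you spell out the involution argument slightly more explicitly, which is fine.
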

\begin{proof}
Let $I_k$ be the indicator variable that position $k \in [n-1]$ is a descent.
We have that $\ee[I_k]=1/2$, since the permutation with entries at $k$ and $k+1$ swapped are equally likely.
Hence, with $\sigma \in \symS_n$,
\[
\ee[\des(\sigma)] = \sum_{k=1}^{n-1} \ee[I_k] = \frac{n-1}{2}.
\]
\end{proof}

Given a uniformly random permutation in $\symS_n$,
it is natural to ask what the expected number of descents is 
after performing $\runsort$. 
We can now answer this in the following theorem.
\begin{theorem}[{See also \cite[p.110]{WarrenSeneta1996}}]
Let  $\sigma \in \symS_n$, with $n\geq 2$ be uniformly chosen.
Then 
\[
\ee[\des(\runsort(\sigma))] = \ee[\peaks(\runsort(\sigma))] 
=  \ee[\peaks(\sigma)] = \frac{n-2}{3}.
\]
\end{theorem}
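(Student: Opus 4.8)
The plan is to establish the three equalities from left to right, each of which draws on a different ingredient already developed in the paper, after which the only genuine computation is the very last one.

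First, the equality $\ee[\des(\runsort(\sigma))] = \ee[\peaks(\runsort(\sigma))]$ is immediate from \cref{rem:desArePeaks}: for every $\tau \in \RSP(n)$ the descent set and the peak set coincide, so $\des(\tau) = \peaks(\tau)$ as integers. Since $\runsort(\sigma) \in \RSP(n)$ for every $\sigma \in \symS_n$, the two random variables $\des(\runsort(\sigma))$ and $\peaks(\runsort(\sigma))$ are literally equal, and hence so are their expectations.

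Second, for the equality $\ee[\peaks(\runsort(\sigma))] = \ee[\peaks(\sigma)]$, I would invoke the main peak identity. Setting all run-start variables $x_i \to 1$ and all peak variables $y_i \to t$ in \cref{thm:mainPeakIdentity} gives
\[
\sum_{\sigma \in \symS_n} t^{\peaks(\sigma)} = \sum_{\tau \in \symS_n} t^{|\SPV(\tau)|} = \sum_{\tau \in \symS_n} t^{\peaks(\runsort(\tau))},
\]
where the last step uses $\SPV(\tau) = \PEAKVAL(\runsort(\tau))$ together with $|\PEAKVAL(\cdot)| = \peaks(\cdot)$. Thus $\peaks(\sigma)$ and $\peaks(\runsort(\sigma))$ share the same distribution over a uniformly random $\sigma$, which forces equality of the expectations.

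Third, it remains to compute $\ee[\peaks(\sigma)] = (n-2)/3$ directly, and here I would use linearity of expectation with indicator variables. For each interior position $i$ with $2 \leq i \leq n-1$, let $J_i$ be the indicator that $i$ is a peak, i.e.\ $\sigma(i-1) < \sigma(i) > \sigma(i+1)$. The three values $\sigma(i-1), \sigma(i), \sigma(i+1)$ occur in each of the $3! = 6$ relative orders with equal probability, and $i$ is a peak exactly when $\sigma(i)$ is the largest of the three, which happens in $2$ of the $6$ orders. Hence $\ee[J_i] = 1/3$, and summing over the $n-2$ interior positions yields $\ee[\peaks(\sigma)] = (n-2)/3$. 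I expect no serious obstacle in this argument, since its depth is entirely borrowed from \cref{thm:mainPeakIdentity}; the only point requiring care is the second equality, where one must observe that the main theorem delivers equality of the full generating polynomials (not merely of expectations), and that the substitution correctly collapses the set-valued statistics $\PEAKVAL$ and $\SPV$ to the cardinalities $\peaks$ and $\peaks \circ \runsort$.
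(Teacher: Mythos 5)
Your proposal is correct, and the first two equalities are handled exactly as in the paper: the identification of descents with peaks in a run-sorted permutation via \cref{rem:desArePeaks}, and the equidistribution of $\peaks(\sigma)$ and $\peaks(\runsort(\sigma))$ via the specialization of \cref{thm:mainPeakIdentity}. Your observation that the theorem gives equality of full generating polynomials, not just expectations, is the right point to flag.

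Where you genuinely diverge is the final computation of $\ee[\peaks(\sigma)] = (n-2)/3$. The paper extracts this from the closed-form exponential generating function $G(u,t)$ in \eqref{eq:peakGenFunc}: it differentiates in $t$, performs the substitution $\theta = u\sqrt{t-1}$, and applies L'H\^opital's rule three times to evaluate the limit as $t \to 1$, arriving at $u^3/(3(u-1)^2) = \sum_{n \geq 2} \tfrac{n-2}{3}u^n$. Your argument instead uses linearity of expectation with the indicator $J_i$ of a peak at each interior position $2 \leq i \leq n-1$, noting that the three entries $\sigma(i-1),\sigma(i),\sigma(i+1)$ realize each of the $3!$ relative orders with equal probability and that exactly $2$ of the $6$ make $i$ a peak, so $\ee[J_i]=1/3$ and the sum over $n-2$ positions gives $(n-2)/3$. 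This is correct and considerably more elementary: it avoids the closed form of $G(u,t)$ and the delicate limit entirely, and it mirrors the paper's own indicator-variable proof that the expected number of descents is $(n-1)/2$. What the paper's route buys in exchange is access to the full polynomial $\Bh_n(t)$ (and in principle higher moments) through the generating function; for the expectation alone, your argument is the shorter path.
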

\begin{proof}
The first equality follows from the fact 
that the number of descents after run-sort is the same as the number of peaks,
as we noted in \cref{rem:desArePeaks}
The second identity follows immediately from \cref{thm:mainPeakIdentity},
so it suffices to compute the expected number of peaks in 
a permutation. 
This has been done before in greater generality, see e.g.,~\cite{FulmanKimLee2019x}.
We include an argument below for the sake of completeness.

Recall from \eqref{eq:peakGenFunc}, the exponential generating function
\[
G(u,t) \coloneqq \sum_{n\geq 0} \frac{u^n}{n!}\sum_{\pi \in \symS_n} t^{\peaks(\pi)}
= \dfrac{\tan(u\sqrt{t-1})}{\sqrt{t-1} - \tan(u\sqrt{t-1})}.
\]
Note that for $\sigma \in \symS_n$, the expectation $\ee[\peaks(\sigma)]$
is equal to the coefficient of $u^n$ in
\[
t \frac{\partial G(u,t)}{\partial t} = \sum_{n\geq 0} \frac{u^n}{n!}\sum_{\pi \in \symS_n} \peaks(\pi) \cdot t^{\peaks(\pi)},
\]
evaluated at $t=1$.
Differentiating $G(t, u)$ with respect to $t$ gives
\begin{equation}\label{generatingfn}
G_{t}(t, u) = \dfrac{t\sqrt{t-1} \sec^2(u\sqrt{t-1})-\tan(u\sqrt{t-1})}{2\sqrt{t-1}(\sqrt{t-1}-\tan(\sqrt{t-1}))^2}.
\end{equation}
Changing variables, with $\theta = u\sqrt{t-1}$, so that 
\[
\sqrt{t-1} = \dfrac{\theta}{u} \qquad \text{ and } 2\sqrt{t-1} = \dfrac{2\theta}{u}.
\]
Then \eqref{generatingfn} becomes 
\begin{equation*}
\dfrac{u^3 (\theta + \theta\tan^2(\theta)-\tan\theta)}{2\theta(\theta^2 - 2\theta u \tan\theta + u^2 \tan^2\theta)}.
\end{equation*}
We now require to find the limit of \eqref{generatingfn} as $t \rightarrow 1$, 
we note that in the change of variables, $\theta = u\sqrt{t-1}$, when $t = 1$, then $\theta = 0$.

Now we obtain 
\begin{equation}\label{generatingfn2}
\lim_{\theta \rightarrow 0}\dfrac{u^3 (\theta + \theta\tan^2(\theta)-\tan\theta)}{2\theta(\theta^2 - 2\theta u \tan\theta + u^2 \tan^2\theta)}, 
\end{equation} which is an indeterminate form $\frac{0}{0}$. 
Applying L'Hopitals rule three times, we obtain that \eqref{generatingfn2} becomes 
\begin{equation}\label{expectedvalue}
\dfrac{4u^3}{2(6-12u+6u^2)} = \frac{u^3}{3(u-1)^2} =
\sum_{n=2}^{\infty}\dfrac{n-2}{3}u^n.
\end{equation}
Since $G(t, u) = \sum_{n \geq 0} B_{n}(t) \frac{u^n}{n!}$, 
we may now compare coefficients, and it follows immediately that
\[
\ee[\peaks(\sigma)] = \frac{n-2}{3},
\]
for $\sigma \in \symS_n$ being uniformly chosen.
\end{proof} 

A result of a similar flavor was recently obtained by C.~Defant~\cite{Defant2020x},
where the expected number of descents after \emph{stack-sorting}
is computed.

\medskip

We end this section with a question.
\begin{question}
Let $\sigma \in \symS_n$ be a uniformly chosen permutation,
and let $\sigma' \coloneqq \runsort(\sigma)$.
We can then consider the permutation matrix with entries equal to $1$ at $(i,\sigma'_i)$, $i \in [n]$.
Typically, this matrix (after scaling) looks like \cref{fig:randomRunSorted},
where a distinctive curve is seen.
As $n\to \infty$ does this curve approach some limit curve?
\begin{figure}[!ht]
\centering
\includegraphics[width=0.4\textwidth]{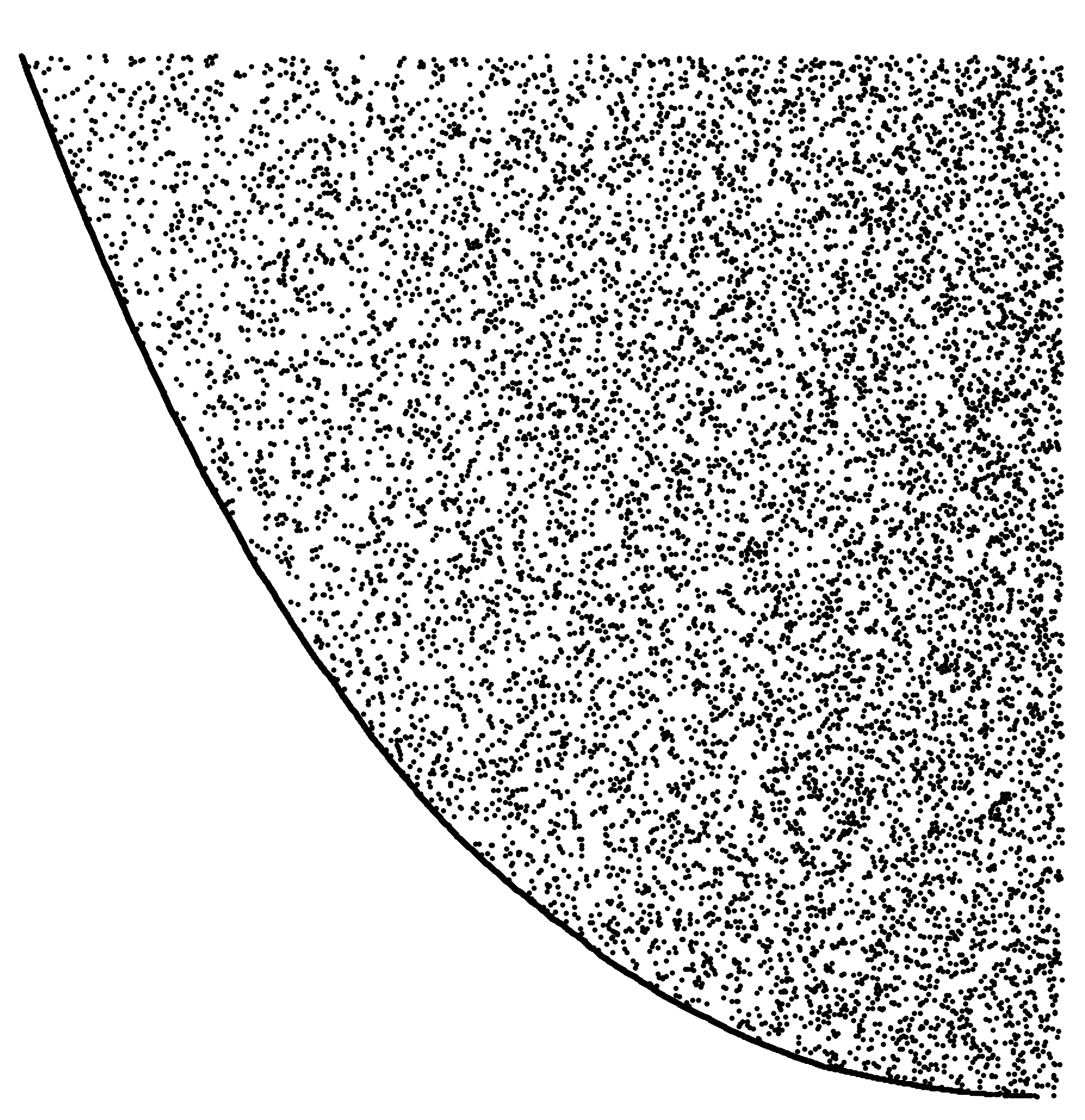} 
\caption{A random permutation matrix $\sigma'$ after lexsort, for $n=20000$.
The entries equal to $1$ are shaded black.}\label{fig:randomRunSorted}
\end{figure}
\end{question}

\section{Run-sorted binary words}\label{D}

Recall that $\RSW(a,b)$ is the set of binary words of length $a+b$,
with $a$ 0s and $b$ 1s, such that the runs
in the word occur in lexicographical order. 
Moreover, recall that the \defin{major index} and the number of \defin{inversions}
of a permutation $\sigma \in \symS_n$ are defined as
\[
\defin{\maj(\sigma)}\coloneqq \sum_{j \in \DES(\sigma)} j, \quad 
\defin{\inv(\sigma)}\coloneqq |\{(i,j): 1 \leq i < j \leq n \text{ and } \sigma(i)>\sigma(j)\}|.
\]
The first result in this section is the following identity.
\begin{proposition}\label{eq:genFuncForBinary}
We have that
\begin{equation}\label{eq:lexbinaryGenFunc}
\sum_{\substack{ a \geq 0 \\ b \geq 0}}  |\RSW(a,b)| q^a t^b = 
 \prod_{i,j \geq 1} \frac{1}{1-q^i t^j}.
\end{equation}
Moreover, for any $a,b \geq 0$, $\RSW(a,b)$ 
is equal to the number of permutations $\sigma$ in $\symS_n$,
such that $\maj(\sigma)=a$, $\maj(\sigma^{-1})=b$.
\end{proposition}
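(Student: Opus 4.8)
The plan is to prove the two assertions in turn: the generating-function identity by a structural bijection, and the permutation count by reducing it to the joint Mahonian distribution. For \eqref{eq:lexbinaryGenFunc} I would start from the run structure of a binary word. Each run has the form $0^{p}1^{q}$ with $p+q\geq 1$, and two consecutive runs remain distinct exactly when the left one ends in a $1$ and the right one begins with a $0$. Writing the runs of a run-sorted word in lexicographic order as $R_{1}\leq\dots\leq R_{r}$ with $R_{i}=0^{p_{i}}1^{q_{i}}$, the structural heart is the claim that, whenever the word contains at least one $0$ and one $1$, every $R_{i}$ is a \emph{full block}, i.e.\ $p_{i},q_{i}\geq 1$. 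The non-merging (descent) condition forces $q_{i}\geq 1$ for $i<r$ and $p_{i}\geq 1$ for $i>1$; lexicographic minimality of $R_{1}$ then excludes $p_{1}=0$ (otherwise $R_{1}$ would begin with $1$ while a later run begins with $0$), and lexicographic maximality of $R_{r}$ excludes $q_{r}=0$ (an all-zero run is lexicographically smaller than any block beginning $0\cdots1$). Hence all runs are full blocks.

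Conversely, a multiset of full blocks admits a unique lexicographic arrangement, and since consecutive full blocks always create a descent they never merge; so the arrangement is a genuine run-sorted word whose multiset of runs recovers the original data. This gives, for $a,b\geq 1$, a bijection between $\RSW(a,b)$ and multisets of pairs $(p,q)\in\setN_{\geq1}^{2}$ with $\sum p=a$ and $\sum q=b$. Weighting a block $0^{i}1^{j}$ by $q^{i}t^{j}$, the multiset generating function is $\prod_{i,j\geq 1}\frac{1}{1-q^{i}t^{j}}$, matching every coefficient $q^{a}t^{b}$ with $a,b\geq 1$ on the right of \eqref{eq:lexbinaryGenFunc} (together with the empty word at $(a,b)=(0,0)$); the pure single-letter words $0^{a}$ and $1^{b}$ lie outside the support of the product, so they are immaterial to the identity as written.

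For the permutation statement, Part~1 reduces the claim to a purely enumerative identity: the number of $\sigma\in\symS_{a+b}$ with $\maj(\sigma)=a$ and $\maj(\sigma^{-1})=b$ should equal the coefficient of $q^{a}t^{b}$ in $\prod_{i,j\geq 1}\frac{1}{1-q^{i}t^{j}}$. Because $\inv(\sigma)=\inv(\sigma^{-1})$ and the pairs $(\maj(\sigma),\inv(\sigma))$ and $(\maj(\sigma),\maj(\sigma^{-1}))$ are equidistributed on $\symS_{n}$ (a standard Mahonian fact, readily checked on small $n$), this is the same as counting $\sigma\in\symS_{a+b}$ with $\maj(\sigma)=a$ and $\inv(\sigma)=b$, as phrased in the introduction. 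Setting $M_{n}(q,t)=\sum_{\sigma\in\symS_{n}}q^{\maj\sigma}t^{\inv\sigma}$, the quantity we want is exactly the homogeneous component of total degree $n=a+b$ of $M_{n}$, and the goal is to show that summing these components over all $n$ recovers the product.

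The hard part is this last step, since the identification is not statistic-preserving in any obvious way: the number of blocks on the binary side does not equal the number of descents of the image permutation, so one cannot hope for a descent-preserving bijection. I would attack it by building the bijection directly from the block data, reading off a permutation of $[a+b]$ and checking, block by block, that its descent positions sum to $a$ while its inversion number is $b$. A generating-function alternative is to pass through RSK, which writes the count as $\sum_{\lambda\vdash(a+b)}\big(\sum_{T\in\SYT(\lambda)}q^{\maj T}\big)\big(\sum_{T\in\SYT(\lambda)}t^{\maj T}\big)$, and then to reconcile its total-degree-$n$ part with the Cauchy-type expansion $\sum_{\lambda}s_{\lambda}(q,q^{2},\dots)\,s_{\lambda}(t,t^{2},\dots)=\prod_{i,j\geq1}\frac{1}{1-q^{i}t^{j}}$. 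Controlling precisely which monomials survive in total degree $n$ is the technical crux in either approach, and I expect the explicit block-to-permutation construction to be the cleaner route.
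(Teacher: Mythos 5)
Your treatment of the first assertion is correct and is, in substance, the paper's own argument: your ``multiset of full blocks $0^{p}1^{q}$ with $p,q\geq 1$'' is exactly the paper's bijection from $\RSW(a,b)$ to biwords $\BW(a,b)$ (each block becoming a column $\binom{p}{q}$), and your multiset generating function is the paper's \cref{prop:BWgenFunc}. Your verification that lexicographic sorting forces every run to be a full block, and that a multiset of full blocks reassembles uniquely into a run-sorted word, is the content the paper leaves implicit, so this part is fine. (Both you and the paper quietly discard the words $0^{a}$ and $1^{b}$, whose contributions $q^{a}$ and $t^{b}$ are absent from the product; this is a shared boundary convention rather than an error in your argument.)

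The second assertion, however, is not proved in your proposal, and this is a genuine gap. After the (legitimate, citable) reduction via the equidistribution of $(\maj(\sigma),\maj(\sigma^{-1}))$ and $(\maj(\sigma),\inv(\sigma))$, everything hinges on the identity
\[
\sum_{a,b\geq 0}|\MIP(a,b)|\,q^{a}t^{b}=\prod_{i,j\geq 1}\frac{1}{1-q^{i}t^{j}},
\]
which you correctly isolate as ``the hard part'' but then only sketch two possible attacks on. The paper does not prove this identity either: it imports it as a known result of Roselle, Cheema--Motzkin and Garsia--Gessel (obtained by specializing the Cauchy identity at $x_{i}\to q^{i-1}$, $y_{j}\to t^{j-1}$ and extracting the diagonal $a+b=n$ via Roselle's formula), and then concludes by comparing with the biword count. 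Your preferred alternative --- an explicit block-to-permutation bijection from $\RSW(a,b)$ to $\MIP(a,b)$ --- is precisely what the paper poses as an open problem, so you should not expect it to be ``the cleaner route''; as written, your proof of the second assertion terminates at exactly the point where the real work begins. To complete the argument at the level of rigor of the paper, replace that final paragraph with a citation to the Roselle/Cheema--Motzkin/Garsia--Gessel generating function for $\MIP(a,b)$ and combine it with your Part~1.
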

Now, let the \defin{reverse-flip} map $\revflip$,
acting on binary words $w$ of fixed length $n$, be defined as 
\[
  \defin{\revflip(w_1,\dotsc,w_n)} \coloneqq (\hat{w}_n, \hat{w}_{n-1},\dotsc, \hat{w}_2, \hat{w}_1)
\]
where $\defin{\hat{w}_j} \coloneqq 1- w_j$.
Finally, let $\Gamma(w) \coloneqq \runsort(\revflip(w))$.
We show below that $\Gamma: \RSW(a,b) \to \RSW(b,a)$ is a bijection.
Moreover, we can also prove the following.
\begin{proposition}\label{prop:symmetricBW}
There is a bijection between the set
\[
  \{ w \in \RSW(n,n) : \Gamma(w) = w  \}
\]
and $\partitionP(n)$, the set of integer partitions of $n$.
Moreover, for any $n \geq 1$, 
\begin{equation*}
\sum_{\substack{w \in \RSW(n, n) \\ \revflip(w)=w }}
 x^{\beta(w)} = 
\sum_{\lambda \in \partitionP(n)}
 x_1^{\lambda_1} x_2^{\lambda_2} \dotsm x_{\ell}^{\lambda_\ell},
\end{equation*}
where $x^{\beta(w)} = x_1^{\beta_1} \dotsm  x_\ell^{\beta_\ell}$,
with $\beta_j$ denoting the number of $0$s in the $j^\thsup$ run of $w$.
\end{proposition}

\subsection{From run-sorted binary words to biwords}

A \defin{biword} (see \cite{GarsiaGessel1979}, where they are called bipartitions)
is an array with two rows with positive integer entries,
\[
\begin{pmatrix}
\uvec \\ \vvec
\end{pmatrix} = 
\begin{pmatrix}
u_{1} & u_{2} & \cdots & u_{m}\\
v_{1} & v_{2} & \cdots & v_{m}
\end{pmatrix},
\]
subject to the following conditions:
\begin{itemize}
 \item the entries in the top row are weakly increasing;
 \item if $u_i = u_{i+1}$ then $v_i \leq v_{i+1}$.
\end{itemize}
Let $\BW(a,b)$ be the set of biwords where 
the top row has total sum $a$, and the bottom row has total sum $b$.

We shall now describe a bijection from $\RSW(a,b)$ to $\BW(a,b)$ as follows.
Let $w \in \RSW(a,b)$ where $w = \rho_1 \rho_2 \dotsb \rho_r$
are the runs of $w$.
For each $i \in [r]$ we have a column
we have a column $\left(\begin{smallmatrix} u_i \\ v_i \end{smallmatrix}\right)$
in the biword, where $u_i$ is the number of $0$s in $\rho_{r+1-i}$
and $v_i$ is the number of $1$s in $\rho_{r+1-i}$.

We observe the following properties of this bijection.
\begin{itemize}
 \item We have 
 \[
  u_1 + \dotsb + u_r = a, \qquad v_1 + \dotsb + v_r = b.
 \]
 \item The top row is weakly increasing
  and if $u_i = u_{i+1}$ then $v_i \leq v_{i+1}$,
  since the word is run-sorted.
\end{itemize}
It is now straightforward to see that this map is indeed a bijection
between $\RSW(a,b)$ and $\BW(a,b)$.

\begin{example}
Consider a binary word $w = 00011011011101111 \in \RSW(6, 11)$. 
The word $w$ has four runs, namely $00011, 011, 0111$ and $01111$.
The biword corresponding to $w$ is given by
\[
\begin{pmatrix}
1 & 1 & 1& 3\\
4 & 3 & 2& 2
\end{pmatrix}.
\]
\end{example}

The following proposition is well-known, see e.g., \cite{GarsiaGessel1979}.
It is one of the steps to show the classical Cauchy-identity via the Robinson--Schensted--Knuth correspondence,
see \cite[Ch.7]{StanleyEC2}.
\begin{proposition}\label{prop:BWgenFunc}
 We have 
 \[
   \sum_{a,b \geq 0} |\BW(a,b)| q^a t^b = \prod_{i,j \geq 1} \frac{1}{1-q^i t^j}.
 \]
\end{proposition}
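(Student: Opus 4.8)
The plan is to recognize that a biword is nothing more than a multiset of columns, and then to compute the generating function by factoring it as an independent product over the possible column types. This converts the statement into a routine geometric-series computation.

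First I would unpack the two defining conditions of a biword. The requirements that the top row be weakly increasing and that $v_i \leq v_{i+1}$ whenever $u_i = u_{i+1}$ together say precisely that the columns $\left(\begin{smallmatrix} u_i \\ v_i \end{smallmatrix}\right)$ are listed in weakly increasing \emph{lexicographic} order. Consequently, giving a biword is the same as giving the \emph{multiset} of its columns: any multiset of pairs of positive integers can be arranged in exactly one way so as to satisfy the two conditions, namely by sorting the columns lexicographically. Hence $\BW(a,b)$ is in bijection with multisets of pairs $(i,j)$, with $i,j \geq 1$, whose first coordinates sum to $a$ and whose second coordinates sum to $b$.

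Next I would set up the weighted count. For a fixed column type $(i,j)$, including that column with multiplicity $k \geq 0$ contributes $q^{ki}t^{kj} = (q^i t^j)^k$ to the monomial $q^a t^b$, since both $a = \sum u_i$ and $b = \sum v_i$ are additive over columns. Summing over all multiplicities gives the geometric series $\sum_{k \geq 0} (q^i t^j)^k = 1/(1 - q^i t^j)$. Because the multiplicities of distinct column types are chosen independently, the full generating function is the product of these contributions over all $(i,j)$ with $i,j \geq 1$, which is exactly $\prod_{i,j \geq 1} 1/(1 - q^i t^j)$.

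There is no real obstacle here; the only points needing care are the bijection between biwords and multisets (the lexicographic sorting gives a unique canonical representative) and the observation that the total weight factors cleanly over columns. Once the multiset description is in hand, expanding $\prod_{i,j \geq 1}(1 - q^i t^j)^{-1}$ as a product of geometric series and reading off the coefficient of $q^a t^b$ recovers precisely $|\BW(a,b)|$, completing the proof.
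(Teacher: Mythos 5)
Your argument is correct and follows essentially the same route as the paper: both expand $\prod_{i,j\geq 1}(1-q^it^j)^{-1}$ as a product of geometric series and identify the exponent $k$ in $(q^it^j)^k$ with the multiplicity of the column $\bigl(\begin{smallmatrix} i \\ j\end{smallmatrix}\bigr)$ in the biword. Your version merely makes explicit the (correct) observation that the two defining conditions amount to the columns being listed in weakly increasing lexicographic order, so a biword is determined by its multiset of columns; the paper leaves this implicit.
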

\begin{proof}
By comparing coefficients, it is enough to show that 
$|\BW(a,b)|$ is the coefficient of $q^a t^b$
in the product
 \[
  \prod_{i,j \geq 1} 
  \left(
  1 + (q^i t^j) + (q^i t^j)^2 + (q^i t^j)^3 + \dotsb
  \right)
\]
But now this is evident; the term $(q^i t^j)^k$
corresponds to the number of columns in
the biword which are of the form $\binom{i}{j}$.
\end{proof}
By using the bijection between biwords and run-sorted binary words,
we can now deduce \cref{eq:genFuncForBinary}.

\medskip 
We now introduce a set of permutations, 
which we shall see have same cardinality as $\BW(a,b)$.
Let $\defin{\MIP(a,b)} \subset \symS_{a+b}$
be the set of permutations $\sigma$ of $[a+b]$ with $\maj(\sigma)=a$, and $\maj(\sigma^{-1})=b$.
That is,
\begin{equation}\label{eq:majInvMajSum}
\MIP(a,b) \coloneqq 
\{ \sigma \in \symS_{a+b} : \maj(\sigma) =a \text{ and } \maj(\sigma^{-1}) = b \}.
\end{equation}
It is known, see e.g., \cite[p.33]{Foata1977}
that for $n \geq 0$,
\[
  \sum_{\sigma \in \symS_n} q^{\maj(\sigma)} t^{\inv(\sigma)}
  =
  \sum_{\sigma \in \symS_n} q^{\maj(\sigma)} t^{\maj(\sigma^{-1})}
\]
so there are several combinatorial interpretations of the quantity $\MIP(a,b)$.
See also the references in \oeis{A090806}.

In \cite{Roselle1974}, D.~Roselle proved that
 \begin{equation}
 \sum_{n \geq 0} \frac{z^n}{
 (q)_n
 (t)_n
 } 
 \sum_{\pi \in \symS_n} t^{\maj(\pi)} q^{\maj(\pi^{-1})} 
 =
 \prod_{i,j \geq 0} \frac{1}{1-z q^i t^j},
\end{equation}
where $\defin{(q)_n} \coloneqq (1-q)(1-q^2)\dotsb(1-q^n)$.

A similar result was obtained by 
M.~Cheema and T.~Motzkin~\cite[Thm.4.1]{CheemaMotzkin1971}
and later generalized A.~Garsia and I.~Gessel.
In fact, it is possible to deduce the following proposition from Roselle's formula.
\begin{proposition}[{See  \cite[p. 299]{GarsiaGessel1979}
and \cite[Remark 6.2]{CheemaMotzkin1971} }]
We have
\begin{equation}\label{eq:mipGenFunc}
\sum_{\substack{ a \geq 0 \\ b \geq 0}} 
 |\MIP(a,b)| q^a t^b = 
 \prod_{i,j \geq 1} \frac{1}{1-q^i t^j}.
\end{equation}
\end{proposition}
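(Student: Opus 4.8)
The plan is to reduce the statement to the biword generating function already computed in \cref{prop:BWgenFunc} and then prove it through the Robinson--Schensted--Knuth correspondence, with Roselle's formula supplying the analytic packaging. Write $M_n(q,t) \coloneqq \sum_{\sigma\in\symS_n} q^{\maj(\sigma)}t^{\maj(\sigma^{-1})}$. The first observation is that $|\MIP(a,b)|$ is exactly the coefficient of $q^at^b$ in $M_{a+b}(q,t)$; equivalently, $\sum_{a,b}|\MIP(a,b)|q^at^b$ collects, for each $n$, the degree-$n$ homogeneous part of $M_n(q,t)$, i.e.\ the monomials $q^at^b$ whose total degree $a+b$ equals the size $n$ of the underlying permutation. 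Since by \cref{prop:BWgenFunc} the product $\prod_{i,j\ge1}(1-q^it^j)^{-1}$ equals $\sum_{a,b}|\BW(a,b)|q^at^b$, it suffices to prove $|\MIP(a,b)|=|\BW(a,b)|$ for all $a,b$.

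The engine is RSK. Sending $\sigma\mapsto(P,Q)$, a pair of standard Young tableaux of a common shape $\lambda$ of size $n$, and using $\sigma^{-1}\mapsto(Q,P)$ together with the classical fact $\DES(\sigma)=\DES(Q)$, one obtains $\maj(\sigma)=\maj(Q)$ and $\maj(\sigma^{-1})=\maj(P)$, hence the expansion $M_n(q,t)=\sum_{\lambda} f_\lambda(q)\,f_\lambda(t)$ over $\lambda$ of size $n$, where $f_\lambda(q)=\sum_{T\in\SYT(\lambda)}q^{\maj(T)}$. Stanley's $q$-analogue of the hook-length formula then gives $f_\lambda(q)/(q;q)_n = s_\lambda(1,q,q^2,\dots)$, a principal specialization of a Schur function. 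Summing over all shapes with the weights $(q;q)_n^{-1}(t;t)_n^{-1}$ and using the Cauchy identity $\sum_\lambda s_\lambda(1,q,\dots)\,s_\lambda(1,t,\dots)\,Z^{|\lambda|}=\prod_{i,j\ge0}(1-Zq^it^j)^{-1}$ recovers precisely Roselle's formula, while the target product $\prod_{i,j\ge1}(1-q^it^j)^{-1}$ is the same Cauchy identity applied to the shifted specializations $s_\lambda(q,q^2,\dots)=q^{|\lambda|}s_\lambda(1,q,\dots)$.

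I expect the balanced (diagonal) extraction to be the main obstacle: the constraint $n=a+b$ couples the $q$- and $t$-gradings, so one cannot match the two Cauchy expansions term-by-term in $\lambda$ (their $\lambda$-supports genuinely differ). The route through is to substitute the expansion of the second paragraph into Roselle's formula and isolate the component in which the $Z$-degree equals the combined $(q,t)$-degree, tracking how the auxiliary factors $\tfrac{1}{1-Z}$, $\prod_{i\ge1}(1-Zq^i)^{-1}$ and $\prod_{j\ge1}(1-Zt^j)^{-1}$ occurring in $\prod_{i,j\ge0}(1-Zq^it^j)^{-1}$ absorb exactly the non-balanced contributions, leaving $\prod_{i,j\ge1}(1-q^it^j)^{-1}$; this is the content of the cited Garsia--Gessel and Cheema--Motzkin theorems, and once the balanced slice is correctly isolated the remaining steps are the routine Cauchy and hook-length computations. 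Finally, combining the resulting equality $|\MIP(a,b)|=|\BW(a,b)|$ with \cref{prop:BWgenFunc} and the bijection $\RSW(a,b)\cong\BW(a,b)$ yields the equidistribution asserted in \cref{eq:genFuncForBinary}, which serves as a consistency check on the whole chain.
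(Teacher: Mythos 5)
First, a point of calibration: the paper does not actually prove this proposition --- it is stated as a quotation of Garsia--Gessel and Cheema--Motzkin, with the remark that it can be deduced from Roselle's formula. So you are not competing with an in-paper argument, and the framework you assemble (RSK sending $\sigma\mapsto(P,Q)$ and $\sigma^{-1}\mapsto(Q,P)$, descent preservation, the principal specialization $\sum_{T\in\SYT(\lambda)}q^{\maj(T)}/(q)_n=s_\lambda(1,q,q^2,\dotsc)$, the Cauchy identity, and the reduction to $|\MIP(a,b)|=|\BW(a,b)|$) is exactly the standard machinery behind the cited results. You also correctly locate the crux: the coupling $n=a+b$ between the size of the permutation and the bidegree of the monomial.

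However, the step meant to resolve that crux is both missing and, as described, incorrect. Writing $M_n(q,t)=\sum_{\sigma\in\symS_n}q^{\maj(\sigma)}t^{\maj(\sigma^{-1})}$, the left-hand side of \eqref{eq:mipGenFunc} is $\sum_n\bigl(\text{total-degree-}n\text{ part of }M_n(q,t)\bigr)$, whereas Roselle's formula at $z=qt$ (equivalently, your Cauchy/hook-length computation) produces the differently weighted aggregation $\sum_n (qt)^n M_n(q,t)/\bigl((q)_n(t)_n\bigr)$. Equating these two linear combinations of the $M_n$ \emph{is} the theorem, and your proposed mechanism --- isolating the component of $\prod_{i,j\ge 0}(1-Zq^it^j)^{-1}$ in which the $Z$-degree equals the combined $(q,t)$-degree, with the auxiliary factors ``absorbing the non-balanced contributions'' --- demonstrably fails. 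Already for $a=b=1$ that extraction reads off $[z^2qt]\prod_{i,j\ge0}(1-zq^it^j)^{-1}=2$ (the multisets $\{(0,0),(1,1)\}$ and $\{(1,0),(0,1)\}$), equivalently $[qt]\,M_2(q,t)/\bigl((q)_2(t)_2\bigr)=2$, while $|\MIP(1,1)|=1$: the unwanted term, coming from the identity permutation paired with a nonempty partition pair, survives the extraction rather than being absorbed. Closing this gap requires genuinely different input --- Roselle's coefficient recursion, the Garsia--Gessel sectioning argument, or a Foata--Sch\"utzenberger-type equidistribution --- none of which is reproduced, so as written the proposal reduces the proposition to an unproved (and mis-stated) claim attributed to the same references the paper already cites.
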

Combining \eqref{eq:mipGenFunc} with \cref{prop:BWgenFunc},
we see that
\[
 \sum_{\substack{ a \geq 0 \\ b \geq 0}} 
 |\BW(a,b)| q^a t^b = 
 \sum_{\substack{ a \geq 0 \\ b \geq 0}}  |\MIP(a,b)| q^a t^b.
\]

\begin{problem}
 Find a bijection between the sets $\RSW(a,b)$ and $\MIP(a,b)$.
\end{problem}

\subsection{Symmetric binary words}

Recall the bijection between run-sorted binary words
and biwords. The following lemma is straightforward to prove.
\begin{lemma}
Suppose $w$ corresponds to the biword $\binom{\uvec}{\vvec}$.
Then $\Gamma(w)$ has the biword $\binom{\vvec'}{\uvec'}$,
which obtained from $\binom{\uvec}{\vvec}$
by first interchanging the two rows, and then sorting the columns 
so that the conditions of being a biword are met.
Consequently, $\Gamma : \RSW(a, b) \to \RSW(b, a)$ is a bijection.
\end{lemma}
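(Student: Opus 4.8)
The plan is to prove the statement entirely at the level of biwords, exploiting two facts: the word-to-biword map of the preceding subsection is a bijection, and a biword is completely determined by the \emph{multiset} of its columns (one simply sorts the columns into the unique admissible order). It therefore suffices to show that the column multiset of the biword of $\Gamma(w)$ is obtained from the column multiset of the biword of $w$ by interchanging the two entries in every column, i.e.\ by interchanging its two rows.

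First I would record the shape of the runs of a run-sorted binary word. Each run is weakly increasing, hence of the form $0^{p}1^{q}$, so I would write the runs of $w$ in lexicographic (= appearance) order as $\rho_i = 0^{p_i}1^{q_i}$ for $i\in[r]$. The key structural observation is that when $r\ge 2$ every run satisfies $p_i\ge 1$ and $q_i\ge 1$: a descent separates consecutive runs, so each $\rho_i$ with $i<r$ ends in a $1$ and each $\rho_i$ with $i>1$ begins with a $0$; moreover lexicographic ordering forces the first run to begin with $0$ and the last run to end with $1$ as well, since a run starting with $1$ is lex-larger than any run starting with $0$. The case $r=1$ (where $w=0^{a}1^{b}$) will be handled separately and is immediate.

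Next I would compute $\revflip(w)$ directly. Reversing $w=\rho_1\cdots\rho_r$ and flipping every bit turns each run $\rho_i=0^{p_i}1^{q_i}$ into the block $0^{q_i}1^{p_i}$ and reverses their order, so
\[
 \revflip(w) = 0^{q_r}1^{p_r}\,0^{q_{r-1}}1^{p_{r-1}}\cdots 0^{q_1}1^{p_1}.
\]
Using $p_i,q_i\ge 1$ for all $i$ (the case $r\ge 2$), each displayed block is nonempty in both symbols, so consecutive blocks are separated by a genuine descent and no two blocks merge; hence these blocks are exactly the runs of $\revflip(w)$. Since $\runsort$ only permutes runs, the multiset of runs of $\Gamma(w)=\runsort(\revflip(w))$ equals $\{\,0^{q_i}1^{p_i}:i\in[r]\,\}$, that is, the runs of $w$ with the roles of $0$ and $1$ exchanged. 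Reading off the pair $(\#0,\#1)$ for each such run then shows that the column multiset of the biword of $\Gamma(w)$ is precisely that of the biword of $w$ with its two rows interchanged, which is the first assertion.

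Finally, for the bijection statement I would note that interchanging the two rows of a biword and re-sorting the columns is a bijection $\BW(a,b)\to\BW(b,a)$ that squares to the identity; composing it with the word-to-biword bijections on either side yields that $\Gamma:\RSW(a,b)\to\RSW(b,a)$ is a bijection (indeed an involution once the two directions are identified), and $\Gamma(w)$ automatically has $b$ zeros and $a$ ones and is run-sorted by construction. The main obstacle is the run-merging issue in the third paragraph: one must argue carefully that the blocks $0^{q_i}1^{p_i}$ do not coalesce, and this is exactly the point where the observation that every run of a multi-run run-sorted word contains both a $0$ and a $1$ is indispensable.
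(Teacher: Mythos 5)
Your proof is correct, and it is essentially the argument the paper has in mind: the paper states this lemma without proof (calling it straightforward), and the only supporting material elsewhere in the source is a sketch observing that $\revflip$ turns the weakly decreasing sequence of $0$-counts of the runs into a weakly increasing sequence of $1$-counts. Your write-up supplies the one genuinely non-trivial detail that the sketch glosses over, namely that for $r\geq 2$ every run of a run-sorted binary word has the form $0^{p}1^{q}$ with $p,q\geq 1$, so the blocks $0^{q_i}1^{p_i}$ of $\revflip(w)$ are separated by honest descents and are exactly its runs; this is precisely what makes the column-multiset computation, and hence the row-swap description of the biword of $\Gamma(w)$, go through.
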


\begin{example}
We have $w = 00011011011101111 \in \RSW(6, 11)$,
with biword
\[
\begin{pmatrix}
1 & 1 & 1& 3\\
4 & 3 & 2& 2
\end{pmatrix}.
\]
Then 
$ \Gamma(00011011011101111) = 00001000100100111$
and the latter word has the biword
\[
\begin{pmatrix}
2 & 2 & 3 & 4 \\
3 & 1 & 1 & 1
\end{pmatrix}.
\]
\end{example}

%
%

Suppose now $\Gamma(w)=w$ for some $w \in \RSW(n,n)$.
It follows from the bijection with biwords that the biword corresponding to 
$w$ is of the form
\[
\begin{pmatrix}
\lambda_{m} & \lambda_{m-1} & \cdots & \lambda_{1} \\
\lambda_{1} & \lambda_{2} & \cdots & \lambda_{m}
\end{pmatrix},
\]
for some $\lambda \in \partitionP(n)$, where $\lambda_j$
is the number of $0$s in the $j^\thsup$ run of $w$.
For example,
\[
 001010101011 \leftrightarrow 
\begin{pmatrix}
1 & 1 & 1 & 2 \\
2 & 1 & 1 & 1 
\end{pmatrix}
 \leftrightarrow (2,1,1,1) \in \partitionP(5).
\]
This reasoning then allows us to deduce \cref{prop:symmetricBW}.

\subsection{Descents in binary words after run-sorting}

Let $U_k(n)$ be defined as 
\[
  U_k(n) \coloneqq \{w \in \BW(n) :  \des(\runsort(w))=k \}.
\]
\begin{proposition}\label{prop:cake2}
We have that for any $n\geq 1$, $U_0$ is the set of binary words of length $n$
matching the \emph{regular expression} \cite{wiki:regexp} $\mathtt{1{*}0{*}1{*}0{*}}$.
Such words can be split into two cases, matching either
$\mathtt{1{*}0{*}}$ or $\mathtt{1{*}({0+}{1+})0{*}}$.
These are the words of the form
\begin{equation}\label{eq:1010Form}
\begin{cases}
 \underbrace{1\dotsc 1}_{a\geq 0} \; \underbrace{0\dotsc 0}_{d\geq 0}   &a+d=n, \text{ or} \\
 \underbrace{1\dotsc 1}_{a\geq 0} \; \underbrace{0\dotsc 0}_{b\geq 1} \; \underbrace{1\dotsc 1}_{c\geq 1} \; \underbrace{0\dotsc 0}_{d\geq 0} & 
 a+b+c+d=n.
 \end{cases}
\end{equation}
Moreover, for $k\geq 2$, the set $U_{k-1}(n)$ consists of all
binary words of length $n$ matching the regular expression
$\mathtt{{1*}({0+}{1+})\{k\}{0*}}$, i.e., words of the form
\begin{equation}\label{eq:regex}
 \underbrace{1\dotsc 1}_{a\geq 0} \; 
 \underbrace{0\dotsc 0}_{b_1\geq 1} \; \underbrace{1\dotsc 1}_{c_1\geq 1} \; 
 \underbrace{0\dotsc 0}_{b_2\geq 1} \; \underbrace{1\dotsc 1}_{c_2\geq 1} \; 
 \dotsb 
 \underbrace{0\dotsc 0}_{b_k\geq 1} \; \underbrace{1\dotsc 1}_{c_k\geq 1} \;
 \underbrace{0\dotsc 0}_{d\geq 0},
\end{equation}
where $a+(b_1+c_1+b_2+c_2+\dotsb + b_k+c_k)+d = n$.
\end{proposition}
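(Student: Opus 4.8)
The plan is to reduce the whole statement to a single combinatorial quantity, namely the number $t$ of \emph{mixed} runs of $w$, i.e.\ runs of the form $0^a1^b$ with $a,b\geq 1$. First I would classify the runs of an arbitrary binary word $w$. Since a run is a maximal weakly increasing factor, every run has the shape $0^a1^b$; moreover consecutive runs are separated by a descent, i.e.\ by a factor $10$, so each non-final run must end in a $1$ and each non-initial run must begin with a $0$. Consequently a run equal to $0^d$ (an all-zero run) can occur only as the last run, a run equal to $1^a$ (an all-one run) can occur only as the first run, and all remaining runs are mixed. This shows that $w$ decomposes uniquely as $1^a(0^{b_1}1^{c_1})\cdots(0^{b_t}1^{c_t})0^d$ with $a,d\geq 0$ and $b_i,c_i\geq 1$, where the $t$ middle blocks are exactly the mixed runs, listed in the order they appear in $w$.

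Next I would analyze the effect of $\runsort$. Comparing runs lexicographically (with $0<1$), an all-zero run $0^d$ is smaller than every mixed run $0^{a'}1^{b'}$, while an all-one run $1^a$ is larger than every run beginning with $0$; hence after sorting the (at most one) all-zero run comes first, the $t$ mixed runs form a contiguous block, and the (at most one) all-one run comes last. Now I would count the descents of the sorted word: a descent between two consecutive sorted runs occurs exactly when the first ends in $1$ and the second begins with $0$. The all-zero run ends in $0$ and merges into what follows, the all-one run begins with $1$ and merges into what precedes it, whereas every mixed run ends in $1$ and begins with $0$; so descents occur precisely at the $t-1$ internal boundaries of the block of mixed runs. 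This yields the clean formula
\[
  \des(\runsort(w)) = \max(t-1,\,0).
\]

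With this formula in hand the proposition follows by matching indices. For $k\geq 2$, the condition $\des(\runsort(w))=k-1$ is equivalent to $t=k$, and by the unique decomposition above the words with exactly $k$ mixed runs are precisely those of the form \eqref{eq:regex}, the constraint $a+(b_1+c_1+\cdots+b_k+c_k)+d=n$ being automatic from $|w|=n$. For the case $k=0$, the condition $\des(\runsort(w))=0$ is equivalent to $t\leq 1$: the subcase $t=0$ gives the words $1^a0^d$ (the regular expression $\mathtt{1*0*}$), and the subcase $t=1$ gives the words $1^a0^b1^c0^d$ with $b,c\geq 1$ (the regular expression $\mathtt{1*(0+1+)0*}$), which are exactly the two forms in \eqref{eq:1010Form}.

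I expect the main obstacle to be the second step, namely rigorously justifying that run-sorting collects all mixed runs into one contiguous block and that no descent is created at the junctions with the all-zero or all-one run. This rests on two verifications that must be done carefully: that at most one all-zero run and at most one all-one run can occur, and only at the ends, and that the relevant lexicographic comparisons place these two special runs strictly at the extremes. Once these are settled the descent count is immediate, since every internal junction of the mixed block is a $1\mid 0$ boundary while the two extreme junctions are $0\mid 0$ and $1\mid 1$ merges.
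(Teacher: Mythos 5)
Your proof is correct and takes essentially the same route as the paper: both classify binary words by the number of mixed runs (equivalently, occurrences of the factor $\mathtt{01}$), which is exactly what the two regular expressions encode. Your key formula $\des(\runsort(w)) = \max(t-1,0)$ --- obtained by checking that run-sorting places the unique all-zero run first and the unique all-one run last, so the only $1\mid 0$ junctions are the $t-1$ internal boundaries of the block of mixed runs --- is precisely the verification the paper's proof leaves to the reader, so your writeup is if anything more complete.
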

\begin{proof}
First, note that every word in $\BW(n)$ belongs to exactly one of the sets $U_k(n)$.
The binary words with $0$ or $1$ subwords equal to \texttt{01}, are captured in \eqref{eq:1010Form},
while binary words with $k$ subwords of the form \texttt{01} belong to \eqref{eq:regex}.

Hence, it suffices to show that the words in $U_k(n)$ has exactly $k$ descents after run-sort,
which is straightforward.
\end{proof}

Analogous to the results in \cref{C}, we can study the distribution of 
descents in binary words after run-sorting.
\begin{theorem}\label{conj:binWordsRunSort}
Let $C_n(t) \coloneqq \sum_{w \in \BW(n)} t^{\des(\runsort(w))}$.
Then
\begin{equation}\label{eq:BWDesAfterLex}
  C_n(t) =
 \binom{n+1}{3}+(n+1)+
 \sum_{k=2}^n t^{k-1} \left(\binom{n}{2k} + \binom{n}{2k+1}\right).
\end{equation}
In particular, the number of binary words of length $n$ with no descents after run-sort,
is given by \oeis{A000125}, the Cake numbers, $\binom{n+1}{3}+(n+1)$.
\end{theorem}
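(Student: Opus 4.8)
The plan is to evaluate each cardinality $|U_k(n)|$ directly from the regular-expression description in \cref{prop:cake2}, and then assemble $C_n(t) = \sum_{k\geq 0} |U_k(n)|\, t^k$. Because \cref{prop:cake2} pins down the exact shape of the words in each $U_k(n)$, every cardinality reduces to counting compositions, so the whole argument is stars and bars followed by one application of Pascal's identity.

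First I would count $U_0(n)$. By \cref{prop:cake2} it consists of the words $1^a 0^d$ with $a+d=n$ and $a,d\geq 0$, of which there are $n+1$, together with the words $1^a 0^b 1^c 0^d$ with $a,d\geq 0$, $b,c\geq 1$, and $a+b+c+d=n$. For the second family I would set $b=b'+1$ and $c=c'+1$, turning the constraint into $a+b'+c'+d=n-2$ in four nonnegative unknowns; stars and bars then gives $\binom{n-2+3}{3}=\binom{n+1}{3}$ words. Summing, $|U_0(n)|=\binom{n+1}{3}+(n+1)$, the cake number, which is the claimed constant term of $C_n(t)$.

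Next, for each $k\geq 2$, I would count $U_{k-1}(n)$, whose members are the words $1^a(0^{b_1}1^{c_1})\dotsm(0^{b_k}1^{c_k})0^d$ with $a,d\geq 0$, all $b_i,c_i\geq 1$, and total length $n$. There are $2k+2$ exponents here, namely $a$, $d$, and the $2k$ positive parts $b_i,c_i$. Shifting every $b_i$ and $c_i$ down by one converts the length equation into a sum of $2k+2$ nonnegative integers equal to $n-2k$, so stars and bars yields $\binom{(n-2k)+(2k+2)-1}{(2k+2)-1}=\binom{n+1}{2k+1}$ words. Pascal's rule then gives $\binom{n+1}{2k+1}=\binom{n}{2k}+\binom{n}{2k+1}$, which is exactly the coefficient of $t^{k-1}$ in \eqref{eq:BWDesAfterLex}; for $k>n/2$ both binomials vanish, matching the fact that no word of length $n$ admits $k$ disjoint blocks $0^{b_i}1^{c_i}$ with all parts positive, so truncating the sum at $k=n$ changes nothing.

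Collecting the terms gives $C_n(t)=|U_0(n)|+\sum_{k\geq 2}|U_{k-1}(n)|\,t^{k-1}$, which is precisely \eqref{eq:BWDesAfterLex}. As a consistency check one can sum the coefficients: since $U_0(n)$ contributes $\binom{n+1}{1}+\binom{n+1}{3}$ and each later term contributes $\binom{n+1}{2k+1}$, the total is $\sum_{j\text{ odd}}\binom{n+1}{j}=2^n$, the number of binary words of length $n$. I do not expect a genuine obstacle here: the only care needed is bookkeeping, keeping straight that $U_{k-1}(n)$ carries $k$ blocks and hence $2k+2$ free exponents, and noticing the Pascal collapse that produces the two-term coefficient. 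The real content of the statement sits inside \cref{prop:cake2}, which already equates the number of descents after run-sorting with the number of $\mathtt{01}$-factors; once that is granted, the theorem is a pure enumeration.
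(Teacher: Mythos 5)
Your proof is correct and follows essentially the same route as the paper: both arguments reduce the theorem to counting the words matching the regular expressions of \cref{prop:cake2}, i.e.\ to an elementary composition count. The only difference is cosmetic --- the paper obtains $\binom{n}{2k}+\binom{n}{2k+1}$ directly by splitting on whether $a=0$ or $a\geq 1$ and choosing the positions of the last digit of each segment, whereas you get $\binom{n+1}{2k+1}$ in a single stars-and-bars computation and then split it via Pascal's rule; your $2^n$ consistency check is a pleasant extra.
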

\begin{proof}
The case $k=0$ is straightforward; there are exactly $n+1$ words of the form \texttt{1*0*}.
That the words in the second case of \eqref{eq:1010Form}
are $\binom{n+1}{3}$ can be proved in the same manner as the $k \geq 2$ situation,
that is, that the number of words of length $n$ of the form in \eqref{eq:regex},
is the sum $\binom{n}{2k} + \binom{n}{2k+1}$.

Observe that for each binary word of the form in \eqref{eq:regex},
we have segments of identical digits of lengths $a \geq 0$, $d \geq 0$, $b_i \geq 1$
and $c_i\geq 1$, for $1 \leq i \leq k$.
For each such segment of identical digits, we need to choose the position 
of the last digit. When $a = 0$, there  are $k$ segments of $0$s and $k$ segments of $1$s, hence giving us a total of $2k$ positions of the last $0$ and $1$ in each of these segments.
The positions of these digits are chosen among the $n$ positions.
Hence we have a total of $\binom{n}{2k}$ words of the form in \eqref{eq:regex} with $a = 0$.

Moreover, when $a \geq 1$, then the first segment contains at least one digit.
Hence we need to choose the position of the last digit for this segment also,
on top of the $2k$ positions from segments of lengths $b_i$ and $c_i$.
Thus we have $2k+1$ positions to be chosen out of $n$ total positions.
This gives $\binom{n}{2k+1}$ binary words of the
form in \eqref{eq:regex} with $a > 0$, and we are done.
\end{proof}

\subsection{Expected number of descents in a run-sorted binary word}\label{ssec:BWEEDes}
  
From the formula in \cref{eq:BWDesAfterLex}
we can easily produce the following exponential generating function:
\begin{align}
H(u,t) &\coloneqq
\sum_{n\geq 0} \frac{u^n}{n!} \left( \binom{n+1}{3}+(n+1)+
 \sum_{k=2}^n t^{k-1} \left(\binom{n}{2k} + \binom{n}{2k+1}\right) \right) \notag \\
&= \frac{e^u \left(\sinh \left(\sqrt{t} u\right) + 
\sqrt{t} \left((t-1) (u+1)+\cosh \left(\sqrt{t} u\right)\right)\right)}{t^{3/2}}
\label{eq:binWordsRunSortGF}.
\end{align}
Mathematica produces the generating function in \eqref{eq:binWordsRunSortGF} 
automatically by using the command
\begin{lstlisting}
FullSimplify[
 Sum[u^n/n!(Binomial[n+1,3]+n+1 + 
  Sum[t^(k-1)(Binomial[n,2k]+Binomial[n,2k+1]),{k,2,n}]),
 {n,0,Infinity}]
]
\end{lstlisting}
From $H(u,t)$, we can compute the expected number of descents in
a binary word after run-sort. It is given by first taking the $t$-derivative 
of $H(u,t)$, then substituting $t=1$. Finally, we extract the coefficient of $u^n$,
multiply by $n!$ and divide by $2^n$ (the total number of binary words). That is,
\[
 \ee[\des(\runsort(w))] = \frac{n!}{2^n} [u^n]H'_t(u,1).
\]
Mathematica can again do all these steps for us.
If we let \texttt{genFunc} denote the exponential generating function
$H(u,t)$, then the command
\begin{lstlisting}
FullSimplify[
 (n!/2^n)*SeriesCoefficient[
  FullSimplify[D[genFunc,t]]/.t->1,
  {u,0,n}]
]
\end{lstlisting}
gives us $\frac{n-5}{4}+2^{-n}(n+1)$.

\section*{Acknowledgement}
O. Nabawanda acknowledges the financial support extended by the Swedish Sida Phase-IV
bilateral program with Makerere University. Special thanks go to J\"orgen~Backelin
and Paul~Vaderlind for all their valuable inputs and suggestions.
We are also thankful for suggestions from Petter Br{\"a}nd{\'e}n, in particular 
the proof of \cref{thm:eulierianMVinterlacing}. We thank M.~Rubey for suggesting to look at left-to-right
minima in \cref{lem:otherStatistics}.

Many thanks to colleagues from CoRS -- Combinatorial Research Studio for lively discussions.

\newpage
\bibliographystyle{alphaurl}
\bibliography{bibliography}

\begin{thebibliography}{BHVW11}

\bibitem[BHVW11]{BrandenHaglundVisontaiWagner2011}
Petter Br\"{a}nd{\'{e}}n, James Haglund, Mirk{\'{o}} Visontai, and David~G.
  Wagner.
\newblock Proof of the monotone column permanent conjecture.
\newblock In {\em Notions of Positivity and the Geometry of Polynomials}, pages
  63--78. Springer Basel, 2011.
\newblock \href {https://doi.org/10.1007/978-3-0348-0142-3_5}
  {\path{doi:10.1007/978-3-0348-0142-3_5}}.

\bibitem[BL16]{BrandenLeander2016x}
Petter Brändén and Madeleine Leander.
\newblock Multivariate $p$-{E}ulerian polynomials.
\newblock {\em arXiv e-prints}, 2016.
\newblock \href {http://arxiv.org/abs/1604.04140} {\path{arXiv:1604.04140}}.

\bibitem[Br{\"a}07]{Branden2007}
Petter Br{\"a}nd{\'e}n.
\newblock Polynomials with the half-plane property and matroid theory.
\newblock {\em Advances in Mathematics}, 216(1):302--320, 2007.
\newblock \href {https://doi.org/10.1016/j.aim.2007.05.011}
  {\path{doi:10.1016/j.aim.2007.05.011}}.

\bibitem[Br{\" a}15]{Branden2015}
Petter Br{\" a}nd{\'{e}}n.
\newblock Unimodality, log-concavity, real–rootedness and beyond.
\newblock In {\em Handbook of Enumerative Combinatorics}, pages 437--483.
  Chapman and Hall/{CRC}, March 2015.
\newblock \href {https://doi.org/10.1201/b18255-10}
  {\path{doi:10.1201/b18255-10}}.

\bibitem[Cal09]{Callan2009}
David Callan.
\newblock Pattern avoidance in ``flattened'' partitions.
\newblock {\em Discrete Mathematics}, 309(12):4187--4191, June 2009.
\newblock \href {https://doi.org/10.1016/j.disc.2008.11.019}
  {\path{doi:10.1016/j.disc.2008.11.019}}.

\bibitem[CM71]{CheemaMotzkin1971}
M.~S. Cheema and T.~S. Motzkin.
\newblock Multipartitions and multipermutations.
\newblock In Theodore~S. Motzkin, editor, {\em Combinatorics}, Proc. Symp. Pure
  Math. 19, pages 39--70. American Mathematical Society, 1971.
\newblock \href {https://doi.org/10.1090/pspum/019/0323703}
  {\path{doi:10.1090/pspum/019/0323703}}.

\bibitem[Def20]{Defant2020x}
Colin Defant.
\newblock Troupes, cumulants, and stack-sorting.
\newblock {\em arXiv e-prints}, 2020.
\newblock \href {http://arxiv.org/abs/2004.11367} {\path{arXiv:2004.11367}}.

\bibitem[ES00]{EhrenborgSteingrismsson2000}
Richard Ehrenborg and Einar Steingrímsson.
\newblock The excedance set of a permutation.
\newblock {\em Advances in Applied Mathematics}, 24(3):284--299, April 2000.
\newblock \href {https://doi.org/10.1006/aama.1999.0671}
  {\path{doi:10.1006/aama.1999.0671}}.

\bibitem[FKL19]{FulmanKimLee2019x}
Jason Fulman, Gene~B. Kim, and Sangchul Lee.
\newblock Central limit theorem for peaks of a random permutation in a fixed
  conjugacy class of $s_n$.
\newblock {\em arXiv e-prints}, 2019.
\newblock \href {http://arxiv.org/abs/1902.00978} {\path{arXiv:1902.00978}}.

\bibitem[Foa77]{Foata1977}
Dominique Foata.
\newblock Distributions {E}uleriennes et {M}ahoniennes sur le groupe des
  permutations.
\newblock In {\em Higher Combinatorics}, pages 27--49. Springer Netherlands,
  1977.
\newblock \href {https://doi.org/10.1007/978-94-010-1220-1_2}
  {\path{doi:10.1007/978-94-010-1220-1_2}}.

\bibitem[GG79]{GarsiaGessel1979}
A.M Garsia and I.~Gessel.
\newblock Permutation statistics and partitions.
\newblock {\em Advances in Mathematics}, 31(3):288--305, March 1979.
\newblock \href {https://doi.org/10.1016/0001-8708(79)90046-x}
  {\path{doi:10.1016/0001-8708(79)90046-x}}.

\bibitem[HV12]{HaglundVisontai2012}
Jim Haglund and Mirk{\'{o}} Visontai.
\newblock Stable multivariate {E}ulerian polynomials and generalized {S}tirling
  permutations.
\newblock {\em European Journal of Combinatorics}, 33(4):477--487, May 2012.
\newblock \href {https://doi.org/10.1016/j.ejc.2011.10.007}
  {\path{doi:10.1016/j.ejc.2011.10.007}}.

\bibitem[Kit07]{Kitaev2007}
Sergey Kitaev.
\newblock Introduction to partially ordered patterns.
\newblock {\em Discrete Applied Mathematics}, 155(8):929--944, April 2007.
\newblock \href {https://doi.org/10.1016/j.dam.2006.09.011}
  {\path{doi:10.1016/j.dam.2006.09.011}}.

\bibitem[Lia55]{Liagre1855}
Jean-Baptiste-Joseph Liagre.
\newblock Sur la probabilité de l'existence d'une cause d'erreur régulière
  dans une série d'observations.
\newblock {\em Bulletins de l'Acad{\'e}mie royale des sciences, des lettres et
  des beaux-Arts de Belgique}, (22):7--55, 1855.

\bibitem[LR19]{LeakeRyder2019}
Jonathan~D. Leake and Nick~R. Ryder.
\newblock Generalizations of the matching polynomial to the multivariate
  independence polynomial.
\newblock {\em Algebraic Combinatorics}, 2(5):781--802, 2019.
\newblock URL: \url{alco.centre-mersenne.org/item/ALCO_2019__2_5_781_0/}, \href
  {https://doi.org/10.5802/alco.63} {\path{doi:10.5802/alco.63}}.

\bibitem[LZ15]{LiuZhang2015}
Thomas Y.~H. Liu and Andy~Q. Zhang.
\newblock On pattern avoiding flattened set partitions.
\newblock {\em Acta Mathematica Sinica, English Series}, 31(12):1923--1928,
  December 2015.
\newblock \href {https://doi.org/10.1007/s10114-015-5153-0}
  {\path{doi:10.1007/s10114-015-5153-0}}.

\bibitem[Mac16]{MacMahon1960}
Percy~A. MacMahon.
\newblock {\em Combinatorial Analysis, Two volumes (bound as one)}.
\newblock Chelsea Publishing Co., New York, 1960, 1915--1916.

\bibitem[MR03]{MantaciRakotondrajao2003}
Roberto Mantaci and Fanja Rakotondrajao.
\newblock Exceedingly deranging!
\newblock {\em Advances in Applied Mathematics}, 30(1-2):177--188, February
  2003.
\newblock \href {https://doi.org/10.1016/s0196-8858(02)00531-6}
  {\path{doi:10.1016/s0196-8858(02)00531-6}}.

\bibitem[MS11]{MansourShattuck2011}
Toufik Mansour and Mark Shattuck.
\newblock Pattern avoidance in flattened permutations.
\newblock {\em Pure Math. Appl.(PU. MA)}, 22(1):75--86, 2011.

\bibitem[MSW15]{MansourShattuckWagner2015}
Toufik Mansour, Mark Shattuck, and Stephan Wagner.
\newblock Counting subwords in flattened partitions of sets.
\newblock {\em Discrete Mathematics}, 338(11):1989--2005, November 2015.
\newblock \href {https://doi.org/10.1016/j.disc.2015.04.023}
  {\path{doi:10.1016/j.disc.2015.04.023}}.

\bibitem[NR20]{NabawandaRakotondrajao2020x}
Olivia Nabawanda and Fanja Rakotondrajao.
\newblock The sets of flattened partitions with forbidden patterns.
\newblock {\em arXiv e-prints}, 2020.
\newblock \href {http://arxiv.org/abs/2011.07304} {\path{arXiv:2011.07304}}.

\bibitem[NRB20]{NabawandaRakotondrajaoBamunoba2020}
Olivia Nabawanda, Fanja Rakotondrajao, and Alex~Samuel Bamunoba.
\newblock Run distribution over flattened partitions.
\newblock {\em Journal of Integer Sequences}, 23, 2020.
\newblock URL:
  \url{https://cs.uwaterloo.ca/journals/JIS/VOL23/Nabawanda/naba5.html}.

\bibitem[Ros74]{Roselle1974}
D.~P. Roselle.
\newblock Coefficients associated with the expansion of certain products.
\newblock {\em Proceedings of the American Mathematical Society},
  45(1):144--150, 1974.

\bibitem[Shi82]{Shiu1982}
Elias S.~W. Shiu.
\newblock Steffensen's poweroids.
\newblock {\em Scandinavian Actuarial Journal}, 1982(2):123--128, April 1982.
\newblock \href {https://doi.org/10.1080/03461238.1982.10405108}
  {\path{doi:10.1080/03461238.1982.10405108}}.

\bibitem[Sta01]{StanleyEC2}
Richard~P. Stanley.
\newblock {\em Enumerative {C}ombinatorics: {V}olume 2}.
\newblock Cambridge University Press, first edition, 2001.
\newblock \href {https://doi.org/10.1017/CBO9780511609589}
  {\path{doi:10.1017/CBO9780511609589}}.

\bibitem[Ste41]{Steffensen1941}
J.~F. Steffensen.
\newblock The poweroid, an extension of the mathematical notion of power.
\newblock {\em Acta Mathematica}, 73(0):333--366, 1941.
\newblock \href {https://doi.org/10.1007/bf02392231}
  {\path{doi:10.1007/bf02392231}}.

\bibitem[Wag92]{Wagner1992}
David~G Wagner.
\newblock Total positivity of {H}adamard products.
\newblock {\em Journal of Mathematical Analysis and Applications},
  163(2):459--483, January 1992.
\newblock \href {https://doi.org/10.1016/0022-247x(92)90261-b}
  {\path{doi:10.1016/0022-247x(92)90261-b}}.

\bibitem[{Wik}21]{wiki:regexp}
{Wikipedia contributors}.
\newblock Regular expression --- {W}ikipedia{,} the free encyclopedia, 2021.
\newblock [Online; accessed 15-March-2021].
\newblock URL: \url{https://en.wikipedia.org/wiki/Regular_expression}.

\bibitem[WS96]{WarrenSeneta1996}
Di~Warren and Eugene Seneta.
\newblock Peaks and {E}ulerian numbers in a random sequence.
\newblock {\em Journal of Applied Probability}, 33(1):101--114, March 1996.
\newblock \href {https://doi.org/10.2307/3215267} {\path{doi:10.2307/3215267}}.

\end{thebibliography}

\end{document}